\documentclass[12pt,twoside]{amsart}
\usepackage{amssymb,amsmath,amsthm, amscd, enumerate, mathrsfs}
\usepackage{graphicx, hhline}
\usepackage[all]{xy}
\usepackage[usenames]{color}
\usepackage{hyperref}
\hypersetup{colorlinks=true}

\title{Notes on the weak positivity theorems} 
\author{Osamu Fujino} 
\date{2015/6/30, version 0.54}
\keywords{semipositivity theorem, weak positivity, mixed Hodge structures, 
Iitaka's conjecture}
\subjclass[2010]{Primary 14J10; Secondary 14D07, 32G20}
\address{Department of Mathematics, Graduate School of Science, 
Kyoto University, Kyoto 606-8502, Japan}
\email{fujino@math.kyoto-u.ac.jp}

\newcommand{\codim}[0]{{\operatorname{codim}}}
\newcommand{\GL}[0]{{\operatorname{GL}}}
\newcommand{\Cone}[0]{{\operatorname{Cone}}}
\newcommand{\Spec}[0]{{\operatorname{Specan}}}
\newcommand{\Pic}[0]{{\operatorname{Pic}}}
\newcommand{\Exc}[0]{{\operatorname{Exc}}}
\newcommand{\Supp}[0]{{\operatorname{Supp}}}
\newcommand{\Gr}[0]{{\operatorname{Gr}}}
\newtheorem{thm}{Theorem}[section]
\newtheorem{lem}[thm]{Lemma}
\newtheorem{cor}[thm]{Corollary}
\newtheorem{prop}[thm]{Proposition}
\newtheorem{conj}[thm]{Conjecture}

\theoremstyle{definition}
\newtheorem{defn}[thm]{Definition}
\newtheorem{rem}[thm]{Remark}
\newtheorem*{ack}{Acknowledgments} 
\newtheorem{say}[thm]{}
\newtheorem{step}{Step}

\begin{document}

\maketitle 

\begin{abstract}
We discuss the (twisted) weak positivity 
theorem. We also treat some applications. 
\end{abstract}

\tableofcontents 

\section{Introduction}\label{f-sec1} 

In this paper, we discuss the (twisted) weak positivity theorem. 
We give a detailed proof of the following theorem, which is 
essentially equivalent to \cite[Theorem 4.13]{campana} (see also \cite{lu}). 
The proof is based on our semipositivity theorem 
(see Theorem \ref{f-thm1.5}, \cite{fujino1}, \cite{fujino-fujisawa}, and \cite{ffs}). 
Note that Theorem \ref{f-thm1.1} has already played important roles 
in \cite{hacon-mckernan}, \cite{fujino-gongyo}, and so on, when 
$X$ is projective (see also \cite{kovacs-patakfalvi}). 

\begin{thm}[Twisted weak positivity]\label{f-thm1.1}  
Let $(X, \Delta)$ be a log canonical 
pair such that $X$ is in Fujiki's 
class $\mathcal C$ and let $f:X\to Y$ be a surjective morphism 
onto a smooth projective variety $Y$. Assume that 
$k(K_X+\Delta)$ is Cartier. 
Then, for every positive integer $m$, 
 $$f_*\mathcal O_X(mk(K_{X/Y}+\Delta))$$ is weakly positive. 
 \end{thm}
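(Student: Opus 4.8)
The plan is to derive the twisted weak positivity as a formal consequence of the semipositivity theorem (Theorem \ref{f-thm1.5}), using a covering construction to reinterpret the twisted line bundle $mk(K_{X/Y}+\Delta)$ Hodge-theoretically and Viehweg's fibre-product method to pass from the resulting semipositivity to weak positivity.

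First I would arrange a convenient geometric model. Since weak positivity is a statement over a dense open subset of $Y$ and is unaffected by replacing $X$ by a suitable modification that preserves the pushforward generically, I would take a log resolution of $(X,\Delta)$ so that $\Delta$ becomes a simple normal crossing divisor, keep careful track of its decomposition into horizontal and vertical parts over $Y$, and shrink $Y$ to the locus over which $f$ is as well-behaved as possible (while noting that weak positivity can be tested after such a shrinking). Throughout I would keep the total space in Fujiki's class $\mathcal C$, which is what legitimizes the Hodge-theoretic input.

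Next, the key device is a cyclic covering. After passing to a sufficiently divisible multiple and choosing a general member of the relevant linear system attached to $k(K_{X/Y}+\Delta)$ (together with the boundary $\Delta$), I would construct a finite cover $Z\to X$ on which $mk(K_{X/Y}+\Delta)$ appears as a direct summand of an honest relative dualizing sheaf $\omega_{Z/Y}$. Combined with a base change making the local monodromies unipotent, this places the situation in the precise form required by Theorem \ref{f-thm1.5}, whose conclusion I would read as nefness (semipositivity) of the corresponding direct image sheaf upstairs. To promote this to weak positivity of $f_*\mathcal O_X(mk(K_{X/Y}+\Delta))$ on $Y$ itself, I would use Viehweg's fibre-product comparison: forming the $s$-fold fibre product $X\times_Y\cdots\times_Y X$ and relating a direct image of a dualizing sheaf from a resolution of it to the $s$-th tensor power of our sheaf, the semipositivity provided above translates into generic global generation of $\hat S^{\alpha\beta}\!\left(f_*\mathcal O_X(mk(K_{X/Y}+\Delta))\right)\otimes H^{\beta}$ for every ample $H$ and every $\alpha$, with a suitable $\beta$; this is exactly the defining property of weak positivity. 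Here I would also invoke the standard facts that weak positivity descends along finite surjective base changes and passes to quotients.

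I expect the main obstacle to be the interaction between the log canonical (rather than klt) singularities and the Hodge theory. Because the fibres are only log canonical, the required semipositivity is not the classical statement for families of smooth projective varieties but its refinement for admissible variations of \emph{mixed} Hodge structure, which is why the argument rests on \cite{fujino-fujisawa} and \cite{ffs}; one must therefore follow the boundary $\Delta$ and all branch divisors through the cyclic cover, the base change and the fibre products so that the simple normal crossing and unipotency hypotheses of Theorem \ref{f-thm1.5} persist at every stage. Controlling the discrepancies and the non-klt locus in these iterated constructions, while remaining in Fujiki's class $\mathcal C$ so that the K\"ahler--Hodge machinery stays valid, is the delicate heart of the proof.
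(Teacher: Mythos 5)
Your overall architecture (log resolution, cyclic covers, semipositivity via Theorem \ref{f-thm1.5} and unipotent reduction, i.e., Theorem \ref{f-thm7.8} and Corollary \ref{f-cor7.11}) matches the paper's, but the step you rely on to convert semipositivity into weak positivity of the twisted sheaf --- Viehweg's $s$-fold fibre product $X\times_Y\cdots\times_Y X$ and the comparison of a direct image of a (pluri)dualizing sheaf on a resolution of it with the $s$-th tensor power of $f_*\mathcal O_X(mk(K_{X/Y}+\Delta))$ --- is precisely the step this paper cannot take, and it is a genuine gap in your argument. That comparison rests on the flat base change theorem for relative dualizing sheaves (\cite[Theorem 2]{verdier}, cf.~\cite[Lemma 3.2]{viehweg1} and \cite[(4.10)]{mori}), and, as the paper points out in \ref{f-say2.15}, it is not known whether this theorem holds in the analytic category; here $X$ is only assumed to be in Fujiki's class $\mathcal C$, not algebraic, so the fibre-product mechanism is unjustified exactly in the generality of Theorem \ref{f-thm1.1}. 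The paper's substitute engine is different: the bootstrap Lemma \ref{f-lem8.2} (Campana's \cite[Lemma 4.19]{campana}), applied with the minimal integer $r$ such that $f_*\mathcal O_X(k(K_{X/Y}+\Delta))\otimes \mathcal O_Y((rk-1)H)$ is weakly positive, which by minimality forces $r\leq k$; and then Lemma \ref{f-lem8.3}, a Kawamata cover of the \emph{base} $Y$ alone extracting $d$-th roots of $H$ with $d\gg 0$, where the needed isomorphism $f'_*\mathcal O_{X'}(k(K_{X'/Y'}+\Delta'))\simeq g^*f_*\mathcal O_X(k(K_{X/Y}+\Delta))$ is obtained elementarily from $\omega_{X'/Y'}=\tau^*\omega_{X/Y}$ via the Hurwitz formula, with no duality theory for fibre products.

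A secondary inaccuracy: your cyclic cover is built from ``a general member of the relevant linear system attached to $k(K_{X/Y}+\Delta)$,'' but in general this system is empty or has uncontrolled base locus; the hypothesis actually needed in the covering Lemma \ref{f-lem8.1} is that some power of the auxiliary bundle $\mathcal N$ be generically globally generated, and this only becomes available after twisting by a multiple of an ample divisor $A=f^*A'$, as in Lemma \ref{f-lem8.2}. Removing that ample twist is the entire difficulty of the theorem, and your proposal delegates exactly this removal to the unavailable fibre-product step. So while your Hodge-theoretic inputs and your concerns about keeping snc/unipotency hypotheses through the covers are well placed, the proof as proposed does not close: you would need to replace the fibre-product comparison by the $r$-minimality bootstrap together with root extraction on $Y$ (Steps 2 and 3 of the paper's proof), or else prove an analytic flat base change theorem that is currently not known.
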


We have already discussed 
some generalizations of Theorem \ref{f-thm1.1} in \cite{fujino4}, 
where $Y$ is a curve and $X$ is a reducible variety. 
They play crucial roles in order to prove the projectivity of 
various moduli spaces. For the details, see \cite{fujino4} and \cite{kovacs-patakfalvi}. 
For the basic properties of weakly positive sheaves and 
Viehweg's fundamental results, 
we recommend the reader to see \cite{fujino-revisited}. 

In this paper, we first prove the following 
Hodge theoretic injectivity theorem (cf.~\cite{ev}, \cite{ambro}, \cite{fujino5}, etc.). 

\begin{thm}[Fundamental injectivity theorem]\label{f-thm1.2}
Let $X$ be a complex manifold in Fujiki's class $\mathcal C$ 
and let $\Delta$ be a boundary $\mathbb R$-divisor 
on $X$ such that $\Supp \Delta$ is a simple normal crossing divisor on $X$. 
Let $\mathcal L$ be a line bundle 
on $X$ and let $D$ be an effective Weil divisor on $X$ whose support 
is contained in $\Supp \Delta$. 
Assume that $\mathcal L\sim _{\mathbb R}K_X+\Delta$. 
Then the natural homomorphism 
$$
H^q(X, \mathcal L)\to H^q(X, \mathcal L\otimes \mathcal O_X(D))
$$ 
induced by the inclusion $\mathcal O_X\to \mathcal O_X(D)$ is injective for every $q$. 
\end{thm}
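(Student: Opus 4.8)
\emph{Proof proposal.}
The plan is to prove this by Hodge theory, following the strategy of Kollár and Esnault--Viehweg for injectivity theorems, but carried out on a manifold in Fujiki's class $\mathcal C$ and for an $\mathbb R$-boundary $\Delta$. Since $X$ is compact and $\Supp\Delta$ is simple normal crossing, write $\Sigma=\Supp\Delta$ and $U=X\setminus\Sigma$, with $n=\dim X$. The first step is to translate the $\mathbb R$-linear equivalence $\mathcal L\sim_{\mathbb R}K_X+\Delta$ into the data of a rank-one unitary local system: around each irreducible component $\Delta_j$ of $\Sigma$ with coefficient $d_j$, prescribe monodromy $\exp(2\pi i d_j)$, obtaining a unitary local system $\mathbb V$ on $U$. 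The integral part $\lfloor\Delta\rfloor$ contributes honest logarithmic poles (trivial monodromy), while the fractional coefficients $\{d_j\}$ produce the genuinely nontrivial, possibly irrational, unitary twist. The Deligne canonical extension of $\mathbb V$ (residues normalized to a fixed half-open interval) is a line bundle with a logarithmic connection, and I would choose the normalization and the rounding of the $d_j$ so that the top Hodge piece $\Omega^n_X(\log\Sigma)\otimes\mathcal V=\omega_X(\Sigma)\otimes\mathcal V$ is identified with $\mathcal L$. Before doing so I would record the elementary reduction that it suffices to prove injectivity after \emph{enlarging} $D$: if $H^q(X,\mathcal L)\to H^q(X,\mathcal L(D'))$ is injective for some effective $D'\ge D$ with $\Supp D'\subseteq\Sigma$, then the map for $D$ is injective as well, because it is the first arrow of a composite whose total is injective. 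This freedom lets me arrange $D$ so that $\mathcal L(D)$ matches the complementary canonical extension of the same $\mathbb V$.

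With this dictionary in place, the second step is to feed in the Hodge-theoretic input. For a compact Kähler manifold the cohomology $H^\bullet(U,\mathbb V)$ of a unitary local system carries a pure, polarizable Hodge structure --- purity being special to unitary coefficients --- computed by the logarithmic de Rham complex $\Omega^\bullet_X(\log\Sigma)\otimes\mathcal V$, whose Hodge--de Rham spectral sequence
$$
E_1^{p,q}=H^q(X,\Omega^p_X(\log\Sigma)\otimes\mathcal V)\Longrightarrow H^{p+q}(U,\mathbb V)
$$
degenerates at $E_1$. This is classical in the projective and Kähler cases (Timmerscheidt, \cite{ev}), and for $X$ merely in class $\mathcal C$ it is exactly what the theory of mixed Hodge structures developed in \cite{fujino1}, \cite{fujino-fujisawa}, and \cite{ffs} supplies, so I would invoke it rather than reprove it. Degeneration identifies $H^q(X,\mathcal L)=E_1^{n,q}$ with the top graded piece $\Gr_F^n H^{n+q}(U,\mathbb V)=F^n H^{n+q}(U,\mathbb V)$, which is in particular a subspace of the abutment; the same holds with $\mathcal L$ replaced by $\mathcal L(D)$ for the complementary extension.

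The third step is to deduce injectivity. The inclusion $\mathcal O_X\to\mathcal O_X(D)$ induces a morphism between the two twisted logarithmic de Rham complexes attached to the two canonical extensions of the single local system $\mathbb V$; on $U$ it is the identity on $\mathbb V$, so the induced map on the abutments $H^{n+q}(U,\mathbb V)$ is injective. A morphism of Hodge structures is strictly compatible with the Hodge filtration, and strictness together with injectivity on the abutment forces injectivity on each graded piece: if $x\in F^n$ maps into $F^{n+1}$ of the target, strictness exhibits the image as coming from $F^{n+1}$ of the source, whence $x\in F^{n+1}=0$. Combined with $E_1$-degeneration on both sides, this makes the induced map on top graded pieces, which is precisely $H^q(X,\mathcal L)\to H^q(X,\mathcal L(D))$, injective, completing the argument.

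I expect the main obstacle to be the Hodge-theoretic foundation rather than the homological bookkeeping: one must know that on a class $\mathcal C$ manifold (not just a Kähler one) the cohomology of a rank-one unitary local system attached to an $\mathbb R$-boundary carries a polarizable pure Hodge structure with $E_1$-degeneration and strict morphisms. In the projective or Kähler case this is the classical theory of \cite{ev}, but in Fujiki's class and with genuinely irrational monodromy it is the substantive point, and I would route the whole argument through the semipositivity and mixed Hodge machinery of \cite{fujino1}, \cite{fujino-fujisawa}, and \cite{ffs}. A secondary technical point is the precise matching of lattices: verifying that the canonical extensions and the rounding of the coefficients $d_j$ really identify the top Hodge pieces with $\mathcal L$ and $\mathcal L(D)$ requires a careful local computation along $\Sigma$, but this is routine once the framework is fixed.
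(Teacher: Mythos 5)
There is a genuine gap, and it sits in your third step, which is the load-bearing one. The map you propose to analyze is not an endomorphism of a single abutment $H^{n+q}(U,\mathbb V)$: the two normalizations of the canonical extension compute \emph{different} topological groups. The lattice matching $\mathcal L$ has residues in the $(0,1]$-window, so its log de Rham complex computes $H^{\bullet}(X, j_!\mathbb V)$ (compact supports), while twisting by $D$ pushes residues down and, along the components of $\lfloor \Delta\rfloor$ (trivial monodromy), lands you in the $Rj_*$-window; the induced map $H^{n+q}(X, j_!\mathbb V)\to H^{n+q}(X, Rj_*\mathbb V)$ is \emph{not} injective in general. Concretely, on $X=\mathbb P^1$ with $\Delta=p_1+\tfrac12 p_2+\tfrac12 p_3$, the boundary circle at $p_1$ contributes a one-dimensional kernel to $H^1_c(U,\mathbb V)\to H^1(U,\mathbb V)$, so ``strictness plus injectivity on the abutment'' has nothing to run on --- the whole content of the theorem is that injectivity nevertheless holds on the top Hodge piece. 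Your escape hatch, enlarging $D$ so that $\mathcal L(D)$ ``matches the complementary canonical extension,'' cannot work either: the two canonical extensions of a fixed $\mathbb V$ differ by at most the reduced divisor supported on the integral-residue components, whereas $D$ is an arbitrary effective Weil divisor on $\Supp\Delta$ (it may have multiplicity $5$ along a component of coefficient $\tfrac12$), and you may only enlarge, never shrink. For such twists outside the unit window the $E_1$-degeneration you need on the $\mathcal L(D)$-side actually fails: on $\mathbb P^1$ with $\Delta=\tfrac12\sum_{j=1}^{6}p_j$ and $D=4p_1$, the $E_1$-terms of the twisted log complex outgrow $\dim H^{\bullet}(U,\mathbb V)$, which is fixed. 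Finally, the foundational point you flag and defer --- purity and degeneration for unitary local systems with irrational monodromy on a class $\mathcal C$ manifold --- is genuinely not in \cite{fujino1}, \cite{fujino-fujisawa}, or \cite{ffs}; Timmerscheidt's arguments are harmonic-theoretic and need a K\"ahler metric, which a class $\mathcal C$ manifold does not carry.

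The paper's proof is structured precisely to avoid all three problems, and the asymmetry is the key idea you are missing: Hodge theory is used on \emph{one} side only, and the divisor $D$ is absorbed at the sheaf level, not the Hodge level. First $\Delta$ is perturbed to a $\mathbb Q$-divisor (Remark \ref{f-rem2.9}), and the fractional part is made geometric by the $N$-fold cyclic cover $\pi:Y\to X$, so the only Hodge input is the $E_1$-degeneration of the compact-support spectral sequence (\ref{f-shiki1}) on the $V$-manifold $Y$ --- a statement with $\mathbb Z$-coefficients, available in class $\mathcal C$ via the CMHC formalism of Section \ref{f-sec4}. This yields a surjection $H^q(Y, j_!\mathbb C_{Y-T})\to H^q(Y,\mathcal O_Y(-T))$, hence a surjection from $H^q(X,\mathcal C)$ onto $H^q(X,\mathcal M^{-1}(-S))$ for the eigensheaf $\mathcal C$. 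Then the monodromy vanishing statement (Lemma \ref{f-lem3.1}: $\mathcal C$ has no sections on any connected open set meeting $\Supp\Delta$) forces the inclusion $\mathcal C\to\mathcal M^{-1}(-S)$ to factor through $\mathcal M^{-1}(-S-D)$ for \emph{every} effective $D$ supported on $\Supp\Delta$, with arbitrary multiplicities (Lemma \ref{f-lem3.2}, Corollary \ref{f-cor3.3}); surjectivity of $H^q(X,\mathcal M^{-1}(-S-D))\to H^q(X,\mathcal M^{-1}(-S))$ follows, and Serre duality converts it into the desired injectivity. To salvage your proposal you would have to replace your step three by this dualized surjectivity-plus-factorization mechanism; the two-canonical-extensions comparison cannot reach the stated generality of $D$.
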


It is easy to see that Theorem \ref{f-thm1.2} implies: 

\begin{thm}[Injectivity theorem]\label{f-thm1.3} Let $X$ 
be a complex manifold in Fujiki's class $\mathcal C$ and let 
$\Delta$ be a boundary 
$\mathbb R$-divisor such that 
$\Supp \Delta$ is simple normal crossing. 
Let $\mathcal L$ be a line bundle on $X$ and let $D$ be an effective 
Cartier divisor whose support contains no log canonical centers of 
$(X, \Delta)$. 
Assume the following conditions. 
\begin{itemize}
\item[(i)] $\mathcal L\sim _{\mathbb R}K_X+\Delta+H$, 
\item[(ii)] $H$ is a semi-ample $\mathbb R$-divisor, and 
\item[(iii)] $tH\sim _{\mathbb R} D+D'$ for some 
positive real number $t$, where 
$D'$ is an effective $\mathbb R$-divisor 
whose support contains no log canonical centers of $(X, \Delta)$. 
\end{itemize}
Then the homomorphisms 
$$
H^q(X, \mathcal L)\to H^q(X, \mathcal L\otimes \mathcal O_X(D))
$$ 
induced by the natural inclusion 
$\mathcal O_X\to \mathcal O_X(D)$ are injective for all $q$. 
\end{thm}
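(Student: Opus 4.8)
The plan is to deduce the statement from the fundamental injectivity theorem (Theorem \ref{f-thm1.2}) by perturbing the boundary and factoring the map in question. First I would record the elementary reduction: it suffices to produce an effective Weil divisor $E$ with $E\ge D$ and $\Supp E\subseteq \Supp(D+D')$, together with a boundary $\mathbb R$-divisor $B$ such that $\Supp B$ is simple normal crossing, $\mathcal L\sim _{\mathbb R}K_X+B$, and $\Supp E\subseteq \Supp B$. Indeed, since $0\le D\le E$, the inclusion $\mathcal O_X\hookrightarrow \mathcal O_X(E)$ factors through $\mathcal O_X(D)$, so the map $H^q(X,\mathcal L)\to H^q(X,\mathcal L\otimes \mathcal O_X(E))$ factors through $H^q(X,\mathcal L)\to H^q(X,\mathcal L\otimes \mathcal O_X(D))$; as Theorem \ref{f-thm1.2}, applied to $(X,B)$ and the divisor $E$, makes the former injective, the latter is injective as well. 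The choice of $E$ is harmless: on each component $P$ of $\Supp(D+D')$ one may take the coefficient of $E$ to be any integer $\ge \max(1,\operatorname{coeff}_P D)$.

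The real content is the construction of $B$, which is where hypotheses (ii) and (iii) enter. Using (iii) I would write $H\sim _{\mathbb R}\tfrac1t(D+D')$ and split it as $H\sim _{\mathbb R}\tfrac{\delta}{t}(D+D')+(1-\delta)H$ for a small $\delta\in(0,1)$. The first summand is an effective $\mathbb R$-divisor whose support is exactly $\Supp(D+D')$ and whose coefficients are as small as we wish; the second is again semi-ample by (ii), so I would replace it by a general effective $\mathbb R$-divisor $G\sim _{\mathbb R}(1-\delta)H$ with arbitrarily small coefficients and with $\Supp G$ in general position, in particular containing no log canonical center of $(X,\Delta)$. Setting $B:=\Delta+\tfrac{\delta}{t}(D+D')+G$ gives $\mathcal L\sim _{\mathbb R}K_X+\Delta+H\sim _{\mathbb R}K_X+B$ and $\Supp B\supseteq \Supp(D+D')\supseteq \Supp E$, as required.

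It remains to check that $B$ is a boundary with simple normal crossing support, and this is the step I expect to be the main obstacle, since it is precisely where the ``no log canonical center'' hypotheses on $D$ and $D'$ are indispensable. For a prime divisor $P$ in $\lfloor \Delta\rfloor$, that is, a codimension-one lc center, none of $D$, $D'$, $G$ contains $P$, so its coefficient in $B$ stays equal to $1$; for every other component the coefficient of $\Delta$ is $<1$, so for $\delta$ small and $G$ chosen with small enough coefficients the total coefficient remains $\le 1$, and since there are finitely many components a single small choice works. The simple normal crossing condition is the delicate point, because a priori $\Supp(\Delta+D+D')$ need not be simple normal crossing. The hard part will therefore be to first pass to a log resolution $\pi\colon \widetilde X\to X$ of $(X,\Delta+D+D')$, transfer cohomology via $R\pi_*\mathcal O_{\widetilde X}=\mathcal O_X$ so that all the groups $H^q(X,\mathcal L\otimes \mathcal O_X(\ast))$ are computed on $\widetilde X$, and then carry out the construction above upstairs, where Bertini-type generality makes $\Supp G$ meet everything transversally and the log canonicity of the simple normal crossing pair $(X,\Delta)$ guarantees that the transformed boundary still has coefficients $\le 1$. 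This resolution bookkeeping, while routine, is the part demanding the most care.
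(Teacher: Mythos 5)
Your perturbation step is, in substance, the paper's own argument: the printed proof of Theorem \ref{f-thm1.3} consists of running the proof of \cite[Theorem 6.1]{fujino2} through Theorem \ref{f-thm1.2}, and that proof is exactly your decomposition $H\sim_{\mathbb R}\tfrac{\delta}{t}(D+D')+(1-\delta)H$, the replacement of $(1-\delta)H$ by a general small effective $G$, and the verification that $B=\Delta+\tfrac{\delta}{t}(D+D')+G$ is a boundary, using that $\Supp D$ and $\Supp D'$ contain no lc centers so that every component of $\lfloor\Delta\rfloor$ keeps coefficient exactly $1$ while all other coefficients stay below $1$. (Incidentally, your auxiliary divisor $E\geq D$ is superfluous: once $\mathcal L\sim_{\mathbb R}K_X+B$ with $\Supp D\subseteq\Supp B$, the Cartier divisor $D$ is itself an effective Weil divisor with support in $\Supp B$, so Theorem \ref{f-thm1.2} applies to it directly.)

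The genuine gap is in the resolution step, which you defer as routine and propose to handle by ``transferring cohomology via $R\pi_*\mathcal O_{\widetilde X}=\mathcal O_X$''; that mechanism does not work, and the sign you worry about is the wrong one. Take $\pi\colon\widetilde X\to X$ as in Szab\'o's lemma (Remark \ref{f-rem2.5}), an isomorphism at the generic point of every lc center of $(X,\Delta)$ (possible since $\Supp D$ and $\Supp D'$ contain no lc centers), and write $K_{\widetilde X}+\Delta_{\widetilde X}=\pi^*(K_X+\Delta)+E$ with $\Delta_{\widetilde X}$ a boundary with simple normal crossing support and $E$ effective and $\pi$-exceptional (all exceptional discrepancies exceed $-1$ precisely because $\pi$ is an isomorphism at the lc centers). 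Then $\pi^*\mathcal L\sim_{\mathbb R}K_{\widetilde X}+(\Delta_{\widetilde X}-E)+\pi^*H$, and $\Delta_{\widetilde X}-E$ has \emph{negative} coefficients along every exceptional divisor of positive discrepancy; the issue is effectivity, not the bound $\leq 1$. Neither $\tfrac{\delta}{t}\pi^*(D+D')$, whose coefficients are tiny and whose support need not contain those exceptional divisors, nor a general $G$, which avoids them, can absorb $-E$, so Theorem \ref{f-thm1.2} cannot be applied to $\pi^*\mathcal L$ and the hypotheses cannot be ``carried out upstairs'' for the pulled-back line bundle. One must instead twist, applying Theorem \ref{f-thm1.2} to $\widetilde{\mathcal L}:=\pi^*\mathcal L\otimes\mathcal O_{\widetilde X}(\lceil E\rceil)$ with boundary $\widetilde B=\Delta_{\widetilde X}+(\lceil E\rceil-E)+\tfrac{\delta}{t}\pi^*(D+D')+G$ and divisor $\pi^*D$, and then descend. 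That descent is where the real work sits: one needs $\pi_*\widetilde{\mathcal L}\simeq\mathcal L$, $\pi_*(\widetilde{\mathcal L}\otimes\mathcal O_{\widetilde X}(\pi^*D))\simeq\mathcal L\otimes\mathcal O_X(D)$, \emph{and} $R^q\pi_*\widetilde{\mathcal L}=R^q\pi_*(\widetilde{\mathcal L}\otimes\mathcal O_{\widetilde X}(\pi^*D))=0$ for $q>0$; without the higher vanishing, the Leray edge map $H^q(X,\pi_*\widetilde{\mathcal L})\to H^q(\widetilde X,\widetilde{\mathcal L})$ is in general not injective, so $R\pi_*\mathcal O_{\widetilde X}=\mathcal O_X$ alone identifies neither the groups nor the maps. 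The vanishing holds because $\lceil E\rceil-(K_{\widetilde X}+\Delta_{\widetilde X}+\lceil E\rceil-E)=-\pi^*(K_X+\Delta)$ is $\pi$-numerically trivial and $\pi$ is an isomorphism at the generic points of the lc centers of the pair upstairs: this is exactly the Reid--Fukuda-type Lemma \ref{f-lem2.19} (after the rationality adjustment of Remark \ref{f-rem2.9}), which the paper's proof explicitly singles out as the ingredient making the reduction work in the bimeromorphic setting. Your proposal is missing this twist-plus-vanishing mechanism, and as literally described the step would fail.
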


As an application of Theorem \ref{f-thm1.3}, we obtain: 

\begin{thm}[Torsion-freeness and 
vanishing theorem]\label{f-thm1.4} 
Let 
$Y$ be a complex manifold in Fujiki's class $\mathcal C$ and 
let $\Delta$ be a boundary $\mathbb R$-divisor such that 
$\Supp \Delta$ is simple normal crossing. 
Let $f:Y\to X$ be a surjective morphism onto a projective 
variety $X$ and let $\mathcal L$ be a line bundle on $Y$ such that 
$\mathcal L-(K_Y+\Delta)$ is $f$-semi-ample. 
\begin{itemize}
\item[(i)] 
Let $q$ be an arbitrary nonnegative integer. 
Then every associated prime of $R^qf_*\mathcal L$ is the generic point of 
the $f$-image of some log canonical stratum of $(Y, \Delta)$. 
\item[(ii)] 
Assume that $\mathcal L-(K_Y+\Delta)\sim _{\mathbb R}f^*H$ for 
some ample $\mathbb R$-divisor $H$ on $X$. 
Then $H^p(X, R^qf_*\mathcal L)=0$ for every $p>0$ 
and $q\geq 0$. 
\end{itemize}
\end{thm}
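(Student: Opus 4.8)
The plan is to derive both statements from the injectivity theorem (Theorem \ref{f-thm1.3}) by Koll\'ar's method, in the form adapted to log canonical pairs. Both assertions will be read off from the injectivity of cohomology maps produced by feeding effective divisors of the form $D = f^{-1}(B)$, with $B$ a general member of a very ample linear system on $X$, into Theorem \ref{f-thm1.3}. In each application the hypotheses (i)--(iii) of that theorem are arranged as follows: the semi-ample divisor is taken to be a suitable twist of $\mathcal L - (K_Y + \Delta)$, a general $B$ guarantees that $\Supp D$ contains no log canonical center of $(Y, \Delta)$, and the auxiliary divisor $D'$ is produced as a general effective $\mathbb R$-divisor $\mathbb R$-linearly equivalent to the relevant multiple of the semi-ample class, so that (i)--(iii) hold routinely.

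For (i), observe that the statement about associated primes is local on $X$ and is unchanged if we replace $\mathcal L$ by $\mathcal L \otimes f^*\mathcal H$ for a line bundle $\mathcal H$ on $X$, since $R^q f_*(\mathcal L \otimes f^*\mathcal H) \cong R^q f_*\mathcal L \otimes \mathcal H$ by the projection formula. Choosing $\mathcal H$ sufficiently ample I may assume $\mathcal L - (K_Y + \Delta)$ is globally semi-ample on $Y$ (an $f$-semi-ample divisor plus the pullback of a sufficiently ample divisor is semi-ample). Suppose some associated prime of $R^q f_*\mathcal L$ were \emph{not} the generic point of the $f$-image of a log canonical stratum, and let $Z$ be its closure; cutting $X$ with $\dim Z$ general very ample divisors reduces us to the case where $Z = \{x_0\}$ is a closed point lying on no $f$-image of a log canonical stratum. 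Now take $A$ very ample on $X$ and a general $B \in |A|$ passing through $x_0$, with section $s \in H^0(X, \mathcal O_X(A))$ cutting out $B$; since $x_0$ is not such an image, $B$ can be chosen to contain no positive-dimensional $f$-image of a log canonical stratum and no other such point, so that $D = f^{-1}(B)$ contains no log canonical center and Theorem \ref{f-thm1.3} applies. Twisting by $f^*\mathcal O_X(NA)$ for $N \gg 0$ and identifying $H^0(X, R^q f_*\mathcal L \otimes \mathcal O_X(NA))$ with $H^q(Y, \mathcal L \otimes f^*\mathcal O_X(NA))$ through Serre vanishing in the Leray spectral sequence, Theorem \ref{f-thm1.3} makes multiplication by $s$,
$$H^0\bigl(X, R^q f_*\mathcal L \otimes \mathcal O_X(NA)\bigr) \xrightarrow{\ \times s\ } H^0\bigl(X, R^q f_*\mathcal L \otimes \mathcal O_X((N+1)A)\bigr),$$
injective. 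But a nonzero torsion section supported at $x_0 \in B$ is annihilated by $s$, a contradiction; hence no such associated prime exists.

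For (ii) I run the analogous argument on higher cohomology, now using that $H$ is ample. Twist by $f^*\mathcal O_X(NA)$ for $A$ very ample, and invoke the $E_2$-degeneration of the Leray spectral sequence of $f$, one of the standard consequences of the injectivity theorem: the Leray filtration on $H^k(Y, \mathcal L \otimes f^*\mathcal O_X(NA))$ then has graded pieces $H^p(X, R^q f_*\mathcal L \otimes \mathcal O_X(NA))$ with $p + q = k$, and these filtrations are respected by multiplication by a section $s \in H^0(X, \mathcal O_X(A))$. Applying Theorem \ref{f-thm1.3} with $D = f^{-1}(B)$ for $B = \{s = 0\}$ general makes the total maps $H^k(Y, \mathcal L \otimes f^*\mathcal O_X(NA)) \to H^k(Y, \mathcal L \otimes f^*\mathcal O_X((N+1)A))$ injective; since multiplication by $s$ is strict for the Leray filtrations, each graded map
$$H^p\bigl(X, R^q f_*\mathcal L \otimes \mathcal O_X(NA)\bigr) \xrightarrow{\ \times s\ } H^p\bigl(X, R^q f_*\mathcal L \otimes \mathcal O_X((N+1)A)\bigr)$$
is injective as well. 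Composing these for $N = 0, 1, \dots$ and invoking Serre vanishing, which gives $H^p(X, R^q f_*\mathcal L \otimes \mathcal O_X(NA)) = 0$ for $p > 0$ and $N \gg 0$, forces $H^p(X, R^q f_*\mathcal L) = 0$ for every $p > 0$.

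The main obstacle will be the passage between cohomology on $Y$, where Theorem \ref{f-thm1.3} supplies injectivity, and the sheaf cohomology of $R^q f_*\mathcal L$ on the base $X$: concretely, the $E_2$-degeneration of the Leray spectral sequence together with the strictness of multiplication by $s$ for the Leray filtration, so that injectivity on total cohomology descends to each graded piece. This is exactly the step that uses the full strength of the injectivity theorem rather than a bare vanishing statement, and it is where part (i) enters essentially. A secondary difficulty is the bookkeeping of log canonical strata under restriction to general hyperplane sections and under adjunction — one must verify that a general $B$ avoids every $f$-image of a log canonical stratum and that the restricted data again satisfy the hypotheses — which is delicate because $X$ is only assumed projective, hence possibly singular, whereas $Y$ is a manifold in Fujiki's class $\mathcal C$; in particular the base-change and flatness statements along a general $B$ must be justified using (i).
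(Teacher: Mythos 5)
Your part (i) is, in substance, the argument the paper itself relies on: the paper's proof of Theorem \ref{f-thm1.4} (i) simply invokes Step 1 of the proof of Theorem 6.3 (i) in \cite{fujino2}, which likewise twists by $f^*\mathcal O_X(NA)$, cuts by general hyperplane sections to make the offending associated point closed, identifies $H^0(X, R^qf_*\mathcal L\otimes \mathcal O_X(NA))$ with $H^q(Y,\mathcal L\otimes f^*\mathcal O_X(NA))$ by Serre vanishing and Leray, and kills a section supported at $x_0$ using the injectivity from Theorem \ref{f-thm1.3}. Two points you flag but leave open should be recorded: the claim that $x_0$ lies on no $f$-image of an lc stratum needs the dimension count (an image $W\supsetneq Z$ remains positive-dimensional after the cuts, so a general $B\ni x_0$ does not contain it, while an image $W\neq Z$ with $\dim W\leq \dim Z$ is cut into points disjoint from those of $Z$), and the commutation of $R^qf_*$ with restriction to general hyperplane sections requires the standard cutting lemmas; both are routine and consistent with the cited proof.

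Part (ii), however, contains a genuine gap. The two assertions that carry your argument --- $E_2$-degeneration of the Leray spectral sequence for $\mathcal L\otimes f^*\mathcal O_X(NA)$ for \emph{every} $N\geq 0$, and strictness of multiplication by $s$ for the Leray filtrations --- are neither proved in the paper nor formal consequences of Theorem \ref{f-thm1.3}, and in this twisted setting the first is essentially equivalent to the statement being proved. Indeed, Theorem \ref{f-thm1.3} with $D=f^*B$, $B\in |mH|$ general and $m\gg 0$, together with Serre vanishing on the target, already forces $F^1H^k(Y,\mathcal L)=0$ for the Leray filtration, i.e. $E^{p,q}_\infty=0$ for $p>0$; degeneration at $E_2$ would then say exactly that $E^{p,q}_2=H^p(X,R^qf_*\mathcal L)=0$ for $p>0$, which is (ii) itself. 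So invoking degeneration is circular, and strictness of $\times s$ is precisely the graded injectivity you want, with no mechanism offered (the Hodge-theoretic sources of strictness are not available for these twisted direct images). The proof the paper points to (Step 1 of \cite[Theorem 6.3 (ii)]{fujino2}) avoids all of this: take $B\in |mH|$ general with $m\gg 0$; by (i) and the generality of $B$, multiplication by $s_B$ is injective on each sheaf $R^qf_*\mathcal L$, so the pushforward of $0\to \mathcal L\to \mathcal L\otimes f^*\mathcal O_X(B)\to (\mathcal L\otimes f^*\mathcal O_X(B))|_{f^*B}\to 0$ breaks into short exact sequences; Serre vanishing kills $H^p(X, R^qf_*\mathcal L\otimes\mathcal O_X(B))$ for $p>0$, and induction on $\dim X$ (adjunction on $f^*B\to B$, where the hypotheses reproduce themselves) kills the cohomology of the restricted sheaf, giving $H^p(X,R^qf_*\mathcal L)=0$ for $p\geq 2$; the remaining case $p=1$ is settled by one genuine application of Theorem \ref{f-thm1.3} on $Y$: the injectivity of $H^{q+1}(Y,\mathcal L)\to H^{q+1}(Y,\mathcal L\otimes f^*\mathcal O_X(B))$ makes $H^q(Y,\mathcal L\otimes f^*\mathcal O_X(B))\to H^q(f^*B,(\mathcal L\otimes f^*\mathcal O_X(B))|_{f^*B})$ surjective, which via the edge isomorphisms is exactly the surjectivity on $H^0$ needed to conclude $H^1(X,R^qf_*\mathcal L)=0$. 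You should replace your degeneration/strictness step by this induction.
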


When $X$ and $Y$ are projective, Theorem \ref{f-thm1.3} and Theorem \ref{f-thm1.4} are 
well-known and play crucial roles in \cite{fujino2}. 

By using Theorem \ref{f-thm1.4}, we can establish: 

\begin{thm}[Semipositivity theorem]\label{f-thm1.5}
Let $X$ be a compact K\"ahler manifold and 
let $Y$ be a smooth projective variety, and let 
$f:X\to Y$ be a surjective morphism. Let $D$ be a simple normal crossing 
divisor on $X$ such that every stratum of $D$ is dominant onto $Y$. 
Let $\Sigma$ be a simple normal crossing divisor on $Y$. 
We put $Y_0=Y\setminus \Sigma$. If $f$ is smooth and 
$D$ is relatively normal crossing 
over $Y_0$, then $R^if_*\omega_{X/Y}(D)$ is the upper canonical 
extension of the bottom Hodge filtration. In particular, it is locally free. 

We further assume that all the local monodromies on the local system 
$R^{d+i}f_{0*}\mathbb C_{X_0- D_0}$ around $\Sigma$ are unipotent, then 
$R^if_*\omega_{X/Y}(D)$ is nef, where $d=\dim X-\dim Y$, 
$X_0=f^{-1}(Y_0)$, and $D_0=D|_{X_0}$.   
\end{thm}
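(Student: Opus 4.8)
The plan is to reduce everything to the asymptotic Hodge theory of the variation of mixed Hodge structure carried by $R^{d+i}f_{0*}\mathbb{C}_{X_0\setminus D_0}$ on $Y_0$, using Theorem \ref{f-thm1.4} to control the boundary $\Sigma$; write $f_0=f|_{X_0}$ and let $j\colon Y_0\hookrightarrow Y$ be the open immersion. First I would treat the situation over $Y_0$. Since $f_0$ is smooth and $D_0$ is relatively simple normal crossing, the local system $\mathbb{V}:=R^{d+i}f_{0*}\mathbb{C}_{X_0\setminus D_0}$ underlies a graded-polarizable admissible variation of mixed Hodge structure whose fibre at $y$ is the mixed Hodge structure on $H^{d+i}(X_y\setminus D_y,\mathbb{C})$. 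Deligne's comparison $H^\bullet(X_y\setminus D_y,\mathbb{C})=\mathbb{H}^\bullet(X_y,\Omega^\bullet_{X_y}(\log D_y))$ together with the $E_1$-degeneration of the relative logarithmic Hodge-to-de Rham spectral sequence and cohomology-and-base-change identifies the bottom Hodge bundle $\mathcal{F}^b(\mathbb{V}\otimes\mathcal{O}_{Y_0})$ with $R^if_{0*}\omega_{X_0/Y_0}(D_0)$, which is the isomorphism I would record on $Y_0$.

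For the first assertion, let $\overline{\mathcal{F}}^b$ denote the upper canonical extension of this bottom Hodge bundle across $\Sigma$, a subbundle of Deligne's canonical extension and in particular locally free. To compare it with $R^if_*\omega_{X/Y}(D)$ on all of $Y$, I would apply Theorem \ref{f-thm1.4}(i) with $\mathcal{L}=\mathcal{O}_X(K_X+D)$, so that $\mathcal{L}-(K_X+D)=0$ is $f$-semi-ample: the log canonical strata of $(X,D)$ are $X$ together with the strata of $D$, all of which dominate $Y$ by hypothesis, so the only associated prime of $R^if_*\mathcal{O}_X(K_X+D)$ is the generic point of $Y$. Hence, by the projection formula, $R^if_*\omega_{X/Y}(D)$ is torsion-free and embeds into $j_*\bigl(R^if_{0*}\omega_{X_0/Y_0}(D_0)\bigr)=j_*\mathcal{F}^b$. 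It then remains to match this subsheaf with $\overline{\mathcal{F}}^b$ in a neighbourhood of $\Sigma$; this is a local computation at the boundary, carried out by describing the canonical extension explicitly through the nilpotent orbit theorem and the limiting mixed Hodge structure and comparing it with the sheaf of relative logarithmic top forms, exactly in the variation-of-mixed-Hodge-structure framework of \cite{fujino-fujisawa} and \cite{ffs}. The matching yields both $R^if_*\omega_{X/Y}(D)\cong\overline{\mathcal{F}}^b$ and local freeness.

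For the semipositivity I would equip $\mathcal{F}^b$ over $Y_0$ with the Hodge metric furnished by the polarization. Griffiths' curvature formula for the extreme Hodge bundle shows this metric is semi-positively curved on $Y_0$ (this is the Fujita-Kawamata curvature property; in the rank-one case it is the positivity of the Petersson metric). The essential step is to pass from this to nefness of the smooth extension $\overline{\mathcal{F}}^b$ on $Y$. Under the unipotent-monodromy hypothesis the residues of the canonical extension are nilpotent, and Schmid's nilpotent-orbit norm estimates, in the several-variable form of Cattani-Kaplan-Schmid, show that the Hodge norm of a frame of $\overline{\mathcal{F}}^b$ grows at most like a power of $\log|s|$ along $\Sigma$, with no fractional factor $|s|^{\alpha}$; thus the Hodge metric extends as a singular Hermitian metric on $\overline{\mathcal{F}}^b$ with only logarithmic singularities and semi-positive curvature current. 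To conclude, I would restrict to a general curve $C\subset Y$ cut out by very ample divisors meeting $\Sigma$ transversally, reducing to the one-dimensional case where semipositivity of the canonically extended Hodge bundle carrying a log-growing, positively curved Hodge metric is classical (Fujita-Zucker-Kawamata); Viehweg's criterion then upgrades nefness on general curves to semipositivity of $\overline{\mathcal{F}}^b=R^if_*\omega_{X/Y}(D)$.

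The hard part will be the boundary analysis common to both assertions: precisely matching the subsheaf $R^if_*\omega_{X/Y}(D)$ with the upper canonical extension near $\Sigma$, and ensuring that the semi-positive curvature over $Y_0$ survives passage to the boundary. Both rest on asymptotic Hodge theory, and it is exactly the unipotent hypothesis that excludes the fractional-power growth which would otherwise spoil semipositivity of the extension.
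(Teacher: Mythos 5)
Your first two paragraphs track the paper's actual route fairly closely: the paper proves torsion-freeness of $R^if_*\mathcal O_X(K_X+D)$ exactly as you do (Theorem \ref{f-thm1.4} (i), recorded as Corollary \ref{f-cor6.1}) and then invokes the arguments of \cite[Section 3]{fujino1} to identify $R^if_*\omega_{X/Y}(D)$ with the upper canonical extension of the bottom Hodge filtration, the identification over $Y_0$ via the relative logarithmic de Rham complex and $E_1$-degeneration being as you state. One caveat even here: the boundary matching cannot be ``carried out by describing the canonical extension explicitly through the nilpotent orbit theorem,'' because Schmid's theorem is a statement about polarized \emph{pure} variations; the relevant asymptotic input in the mixed setting is the admissibility theory of Steenbrink--Zucker and Kashiwara, which is what \cite{fujino1}, \cite{fujino-fujisawa}, and \cite{ffs} actually use. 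Since you delegate to that framework by citation, this part is acceptable as a sketch.

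The genuine gap is your semipositivity paragraph. Whenever $D\neq 0$, the local system $\mathbb V=R^{d+i}f_{0*}\mathbb C_{X_0-D_0}$ underlies only a \emph{graded-polarizable} variation of mixed Hodge structure: there is no polarization on $\mathbb V$ itself, hence no ``Hodge metric furnished by the polarization'' on $\mathcal F^b$, no Griffiths curvature formula for it, and no access to Schmid's or Cattani--Kaplan--Schmid's norm estimates. Concretely, for $d=1$ and $i=0$ the fiber of $\mathcal F^b$ is $H^0(X_y,\Omega^1_{X_y}(\log D_y))$, sitting inside $H^1(X_y-D_y)$, which has weights $1$ and $2$; the would-be Hodge norm $\int_{X_y}\omega\wedge\overline{\omega}$ is infinite as soon as $\omega$ has a nonzero residue along $D_y$, so the metric on which your entire third paragraph is built does not exist. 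Applying pure-case asymptotic Hodge theory to a graded-polarizable VMHS is precisely the misuse the paper warns against in its remark on canonical extensions (apropos \cite{kawamata3} and \cite{kawamata4}), with explicit counterexamples cited in \cite[Example 1.5]{fujino-fujisawa} and \cite[(3.16) Example]{sz}. The paper instead deduces semipositivity from the VMHS semipositivity theorem (\cite[Theorem 1.3]{fujino-fujisawa} or \cite[Theorem 3]{ffs}), whose proof works with the relative monodromy weight filtration of an admissible variation and reduces to the polarized pure case on the $\Gr^W$ pieces, rather than through a curvature estimate on $\mathcal F^b$. To repair your argument you would have to replace the metric discussion wholesale by that input; the curve-cutting and Fujita--Zucker--Kawamata reduction you propose only becomes available after that substitution, at which point it is the cited theorem rather than your analysis doing the work.
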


We note that a {\em{nef}} locally free sheaf was originally 
called a ({\em{numerically}}) {\em{semipositive}} locally 
free sheaf in the literature. 
Theorem \ref{f-thm1.5} is the main ingredient of Theorem \ref{f-thm1.1}. 
In this paper, we do not use \cite[Theorem 32]{kawamata1} for 
the proof of Theorem \ref{f-thm1.1} (see Remark \ref{f-rem6.4}). 
Note that Theorem \ref{f-thm7.8} and Corollary \ref{f-cor7.11}, 
which directly follow from Theorem \ref{f-thm1.5}, are new. 

Let us discuss some applications of Theorem \ref{f-thm1.1}. 
The following conjecture is a natural formulation of Iitaka's conjecture 
for the minimal model program. 

\begin{conj}[Log Iitaka conjecture]\label{f-conj1.6}
Let $(X, \Delta)$ be a projective log canonical 
pair and let $f:X\to Y$ be a surjective morphism 
onto a normal projective  variety $Y$ with connected fibers. 
Then 
\begin{align*}
\kappa (X, K_X+\Delta)\geq \kappa (X_y, K_{X_y}+\Delta|_{X_y})+\kappa (Y)
\end{align*}
where $X_y$ is a sufficiently general fiber of $f:X\to Y$. 
Note that $\kappa (Y)$ denotes the Kodaira dimension of $Y$, that is, $\kappa (Y)
=\kappa(\widetilde Y, K_{\widetilde {Y}})$, where 
$\widetilde Y\to Y$ is a resolution of singularities. 
\end{conj}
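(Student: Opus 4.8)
The plan is to follow Viehweg's strategy, using the twisted weak positivity theorem (Theorem \ref{f-thm1.1}) as the engine. The starting point is the relative adjunction identity $K_X+\Delta=(K_{X/Y}+\Delta)+f^*K_Y$, which after multiplying by $mk$ and applying the projection formula gives
$$
f_*\mathcal O_X(mk(K_X+\Delta))\cong f_*\mathcal O_X(mk(K_{X/Y}+\Delta))\otimes \mathcal O_Y(mkK_Y).
$$
Writing $\mathcal F_m:=f_*\mathcal O_X(mk(K_{X/Y}+\Delta))$, this identifies $H^0(X,\mathcal O_X(mk(K_X+\Delta)))$ with $H^0(Y,\mathcal F_m\otimes \mathcal O_Y(mkK_Y))$. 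After a suitable log resolution and after replacing $f$ by a birational model so that $f$ has the form required in Theorem \ref{f-thm1.1} (with $Y$ smooth projective, hence $K_Y$ Cartier up to a bounded multiple), the sheaf $\mathcal F_m$ is weakly positive. Its generic rank equals $\dim H^0(X_y,\mathcal O_{X_y}(mk(K_{X_y}+\Delta|_{X_y})))$, whose growth in $m$ is governed by the relative Kodaira dimension $\ell:=\kappa(X_y,K_{X_y}+\Delta|_{X_y})$.

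The second step is to convert the weak positivity of $\mathcal F_m$ together with the sections of $mkK_Y$ into the predicted number of sections of $mk(K_X+\Delta)$. Heuristically $\mathcal F_m$ carries $\sim m^{\ell}$ independent fiber directions, and twisting by a line bundle reflecting $\kappa(Y)$ should multiply the section count by $\sim m^{\kappa(Y)}$, yielding $h^0(X,mk(K_X+\Delta))\gtrsim m^{\ell+\kappa(Y)}$ and hence the inequality. Before carrying this out I would run the Iitaka fibration of $Y$ and reduce to a tower: a resolution of $Y$ fibers over a base of general type of dimension $\kappa(Y)$, with general fibers of Kodaira dimension $0$. Composing fibrations, the conjecture for $f$ reduces to the two extreme regimes for the base, namely $\kappa(Y)=\dim Y$ and $\kappa(Y)=0$. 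In the general-type case $K_Y$ is big, and weak positivity of $\mathcal F_m$ tensored with a big line bundle produces the required growth essentially directly; this is Viehweg's theorem and follows from Theorem \ref{f-thm1.1}.

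The main obstacle is the residual case $\kappa(Y)=0$. Here $K_Y$ carries no strict positivity, so weak positivity of $\mathcal F_m$ alone manufactures no new sections, and one must instead exploit the finer structure of $\mathcal F_m$: its numerically flat part, or the flat unitary local systems underlying the Hodge-theoretic semipositivity of Theorem \ref{f-thm1.5}. Controlling this part rigorously appears to require either an abundance-type input or generic-vanishing techniques for the pushforward sheaves, in the spirit of the Hodge theory behind Theorem \ref{f-thm1.5}. I expect this $\kappa(Y)=0$ case to be genuinely hard; it is precisely where the argument stalls in general, which is why the statement is recorded as Conjecture \ref{f-conj1.6} rather than proved, even granting the full strength of the twisted weak positivity theorem.
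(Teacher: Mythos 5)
You have correctly diagnosed the situation: the statement is recorded in the paper as Conjecture \ref{f-conj1.6} precisely because no proof is known, so there is no proof in the paper to compare your attempt against, and your refusal to manufacture one is the right call. Your partial argument matches what the paper actually does with this circle of ideas: the case $\kappa(Y)=\dim Y$ is Theorem \ref{f-thm1.7}, proved in Section \ref{f-sec9} by combining the twisted weak positivity theorem (Theorem \ref{f-thm1.1}) with the weak semistable reduction of Abramovich--Karu \cite{ak} to make $f$ equidimensional (so that the relevant pushforwards are reflexive), then Proposition \ref{f-prop9.1} together with Kodaira's lemma $aK_{Y'}\sim H+F$. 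This is essentially your ``general-type regime,'' though note that the paper must assume $\Delta$ is a $\mathbb Q$-divisor there, since Theorem \ref{f-thm1.1} requires $k(K_X+\Delta)$ to be Cartier, whereas the conjecture as stated allows $\mathbb R$-coefficients; your projection-formula step silently makes the same restriction. The paper also records in Remark \ref{f-rem1.8} an alternative route you do not mention: Nakayama's unconditional inequality for the numerical dimension $\kappa_\sigma$ shows the conjecture follows from the generalized abundance conjecture $\kappa_\sigma=\kappa$.

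One concrete caveat on your reduction step: it is not routine that composing fibrations through the Iitaka fibration of $Y$ reduces the conjecture to the two regimes $\kappa(Y)=\dim Y$ and $\kappa(Y)=0$. The base $Z$ of the Iitaka fibration of $Y$ has $\dim Z=\kappa(Y)$ but need not be of general type as a variety; it is of general type only in Campana's orbifold sense, so making your composition rigorous requires the orbifold version of the conjecture (raised by Campana, see \cite{campana}) rather than Theorem \ref{f-thm1.7} itself. Hence even your reduction to the residual case is conditional, not a formal consequence of the known general-type-base theorem. That said, your identification of the $\kappa(Y)=0$ case as the genuine obstruction --- where weak positivity of $f_*\mathcal O_X(mk(K_{X/Y}+\Delta))$ manufactures no new sections and one would need abundance-type or finer Hodge-theoretic input on the flat part of these sheaves --- is accurate and consistent with why the paper leaves the statement as a conjecture.
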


When $\dim X=n$ and $\dim Y=m$ in Conjecture \ref{f-conj1.6}, 
we sometimes call it Conjecture $C_{n, m}^{\text{log}}$. 
If $X$ and $Y$ are smooth and $\Delta=0$, then Conjecture \ref{f-conj1.6} is nothing but 
Iitaka's original conjecture (see \cite{iitaka}). 
We can easily check that Conjecture 
\ref{f-conj1.6} holds true when $Y$ is of general type and 
$\Delta$ is a $\mathbb Q$-divisor. 
Note that Theorem \ref{f-thm1.7} below is contained in \cite{campana} 
(see also \cite[Chapter V.~4.1.~Theorem (2)]{nakayama}). 
Moreover, Campana raised the orbifold version of the Iitaka conjecture. 
For the details, see \cite[Section 4]{campana} (see also \cite{lu}). 

\begin{thm}[Addition formula]\label{f-thm1.7}
Let $(X, \Delta)$ be a projective log canonical 
pair such that $\Delta$ is a $\mathbb Q$-divisor 
and let $f:X\to Y$ be a surjective morphism 
onto a normal projective  variety $Y$ with connected fibers. 
Assume that 
$\kappa (Y)=\dim Y$. 
Then 
\begin{align*}
\kappa (X, K_X+\Delta)&= \kappa (X_y, K_{X_y}+\Delta|_{X_y})+\kappa (Y)\\
&=\kappa (X_y, K_{X_y}+\Delta|_{X_y})+\dim Y
\end{align*}
where $X_y$ is a sufficiently general fiber of $f:X\to Y$. 
\end{thm}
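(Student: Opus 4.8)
The plan is to establish the two inequalities separately: the upper bound is the classical easy addition of Iitaka, while the lower bound is exactly where Theorem~\ref{f-thm1.1} enters. First I would reduce to the case where the base is smooth. Replacing $Y$ by a resolution and $X$ by a suitable log resolution of the main component of the base change, chosen so that $K_X+\Delta$ and its restriction to a general fiber keep their Kodaira dimensions (both sides of the asserted equality being birational invariants of the pair and of the fibration), I may assume $Y$ is a smooth projective variety; since $\kappa(Y)=\dim Y$ the bundle $K_Y$ is then big. I would dispose of the degenerate case $\kappa(X_y,K_{X_y}+\Delta|_{X_y})=-\infty$ at once, since there easy addition forces $\kappa(X,K_X+\Delta)=-\infty$ as well, so I may assume $\kappa:=\kappa(X_y,K_{X_y}+\Delta|_{X_y})\ge 0$. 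The upper bound itself is then Iitaka's easy addition applied to the divisor $K_X+\Delta$ on $f$, combined with the adjunction $(K_X+\Delta)|_{X_y}=K_{X_y}+\Delta|_{X_y}$ on a sufficiently general fiber.

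For the reverse inequality, fix a positive integer $k$ with $k(K_X+\Delta)$ Cartier (such $k$ exists because $(X,\Delta)$ is log canonical with $\mathbb Q$-boundary) and choose $m$ so that the sections of $mk(K_{X_y}+\Delta|_{X_y})$ already realize the Iitaka dimension of the general fiber. Write $P_m:=f_*\mathcal O_X(mk(K_{X/Y}+\Delta))$; this is weakly positive by Theorem~\ref{f-thm1.1}, and its generic rank equals $h^0(X_y,mk(K_{X_y}+\Delta|_{X_y}))>0$. From $K_X+\Delta=(K_{X/Y}+\Delta)+f^*K_Y$ and the projection formula I obtain
$$
H^0(X,Nmk(K_X+\Delta))=H^0\bigl(Y,\,f_*\mathcal O_X(Nmk(K_{X/Y}+\Delta))\otimes\mathcal O_Y(Nmk K_Y)\bigr).
$$
I would then exploit the multiplication maps $S^N P_m\to f_*\mathcal O_X(Nmk(K_{X/Y}+\Delta))$: by the choice of $m$, the image $Q_N$ of this map is a subsheaf whose generic rank grows like $N^{\kappa}$, and as a quotient of the weakly positive sheaf $S^N P_m$ it is again weakly positive. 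The task is thereby reduced to bounding $h^0(Y,Q_N\otimes\mathcal O_Y(Nmk K_Y))$ from below.

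The main obstacle, and the technical heart of the argument, is the estimate that a weakly positive sheaf twisted by a big line bundle acquires many sections. Since $K_Y$ is big I can write a multiple as (ample)$+$(effective), and feeding the ample part into the defining generic global-generation property of weak positivity for $Q_N$ yields $h^0(Y,Q_N\otimes\mathcal O_Y(Nmk K_Y))\gtrsim \operatorname{rank}(Q_N)\cdot N^{\dim Y}\sim N^{\kappa+\dim Y}$. This gives $\kappa(X,K_X+\Delta)\ge \kappa+\dim Y$, which together with the easy addition bound yields the first equality, the second being merely the hypothesis $\kappa(Y)=\dim Y$ substituted in. I expect the delicate point to be precisely this last estimate: keeping the growth rate in $N$ uniform while the rank of $Q_N$ itself grows, and controlling the loss coming from the effective part of $K_Y$.
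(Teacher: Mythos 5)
Your outer frame (resolving the base, easy addition for the upper bound and for the case $\kappa(X_y,K_{X_y}+\Delta|_{X_y})=-\infty$, Kodaira's lemma on the big divisor $K_Y$, and Theorem \ref{f-thm1.1} as the source of positivity) is consistent with the paper, but the step you yourself flag as delicate is a genuine gap, for two distinct reasons. First, the estimate $h^0(Y,Q_N\otimes\mathcal O_Y(NmkK_Y))\gtrsim\operatorname{rank}(Q_N)\cdot N^{\dim Y}$ does not follow from ``the defining generic global-generation property of weak positivity for $Q_N$'': Definition \ref{f-def7.2} applied to $Q_N$ only produces, for each $\alpha$, some $\beta=\beta(N,\alpha)$ such that $\widehat S^{\alpha\beta}(Q_N)\otimes\mathcal H^{\otimes\beta}$ is generically generated; the exponent $\beta$ is completely uncontrolled in $N$, and the statement concerns symmetric powers of $Q_N$, not $Q_N\otimes A^{\otimes N}$ itself, so applying it separately for each $N$ loses exactly the uniformity you worry about. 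This particular leak is repairable inside your framework: apply weak positivity once to the \emph{fixed} sheaf $P_m$, for every $\alpha$, to obtain a sequence $N=\alpha\beta(\alpha)\to\infty$ along which the image sheaf $Q_N\otimes A^{\otimes N/\alpha}$ is generically generated; a generically generated torsion-free sheaf of rank $r$ contains $\mathcal O_Y^{\oplus r}$ (take $r$ general sections; the kernel of $\mathcal O_Y^{\oplus r}\to Q_N\otimes A^{\otimes N/\alpha}$ is a torsion subsheaf of a free sheaf, hence zero), so twisting by $A^{\otimes N(1-1/\alpha)}$ gives $h^0\geq r\cdot h^0(A^{\otimes N(1-1/\alpha)})\gtrsim N^{\kappa+\dim Y}$ along that sequence, which suffices because the Iitaka dimension is computed by a $\limsup$.

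Second, and harder to repair: your reduction only makes $Y$ smooth, whereas the count above needs \emph{global} sections, and the multiplication map $\widehat S^{N}(P_m)\to f_*\mathcal O_X(Nmk(K_{X/Y}+\Delta))$ is a priori defined only over the open set $\widehat Y$ where $P_m$ is locally free. Extending it across $Y\setminus\widehat Y$ (codimension $\geq 2$) requires the target to be reflexive, and this can fail when fiber dimension jumps: $f^{-1}(Y\setminus\widehat Y)$ may be a divisor in $X$, so the sections you produce over $f^{-1}(\widehat Y)$ can acquire poles there and do not bound $h^0(X,Nmk(K_X+\Delta))$ from below. This is precisely why the paper begins with the Abramovich--Karu weak semistable reduction, replacing $f$ by an \emph{equidimensional} $f':X'\to Y'$ with $(X',\Delta')$ log canonical; in the proof of Proposition \ref{f-prop9.1} the reflexivity of $f'_*\mathcal O_{X'}(2ba(K_{X'/Y'}+\Delta'))$, which rests on equidimensionality, is invoked explicitly. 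Note also that the paper's mechanism is different in spirit: rather than asymptotic section counting, a single generic-generation statement yields one nontrivial inclusion $f'^*\mathcal O_{Y'}(bH)\hookrightarrow\mathcal O_{X'}(2ba(K_{X'/Y'}+\Delta'))\otimes f'^*\mathcal O_{Y'}(2bH)$, whence a rational map $\Phi$ whose image has dimension exactly $\kappa(X'_y,K_{X'_y}+\Delta'|_{X'_y})+\dim Y'$; Kodaira's lemma $aK_{Y'}\sim H+F$ then converts this into the lower bound with no growth estimates at all. To complete your proof you should either import the equidimensional reduction or find another way to make the generating sections global.
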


\begin{rem}\label{f-rem1.8}
By Nakayama (see \cite[Chapter V.~4.4.~Theorem (1)]{nakayama}), we have  
$$
\kappa_{\sigma}(X, K_X+\Delta)\geq \kappa _{\sigma}(X_y, K_{X_y}+\Delta|_{X_y})+
\kappa_{\sigma}(\widetilde Y, K_{\widetilde Y}), 
$$
where $\kappa _\sigma$ denotes Nakayama's numerical 
dimension. 
In general, it is conjectured that 
$\kappa_{\sigma}(X, K_X+\Delta)=\kappa (X, K_X+\Delta)$ 
when $\Delta$ is a $\mathbb Q$-divisor, which 
is sometimes called {\em{the generalized abundance conjecture}}. 
If $\kappa _{\sigma}(X, K_X+\Delta)=\kappa(X, K_X+\Delta)$, then 
we have 
\begin{align*}
\kappa (X, K_X+\Delta)&\geq \kappa_{\sigma}(X_y, K_{X_y}+\Delta|_{X_y})
+\kappa_{\sigma}( \widetilde {Y}, K_{\widetilde Y})
\\ & \geq \kappa (X_y, K_{X_y}+\Delta|_{X_y})+\kappa (\widetilde Y, K_{\widetilde Y}). 
\end{align*}
Therefore, Conjecture \ref{f-conj1.6} follows from 
the generalized abundance conjecture when $\Delta$ is a $\mathbb Q$-divisor. 
\end{rem}

Theorem \ref{f-thm1.9} is due to Maehara (see \cite[Corollary 2]{maehara}). 
In this paper, we recover it as an application of Theorem \ref{f-thm1.1}. 

\begin{thm}[Addition formula for logarithmic Kodaira 
dimensions]\label{f-thm1.9} 
Let $f:X\to Y$ be a surjective morphism between smooth projective varieties 
with connected fibers. 
Let $D_X$ {\em{(}}resp.~$D_Y${\em{)}} be a simple normal crossing divisor 
on $X$ {\em{(}}resp.~$Y${\em{)}}. 
Assume that $\Supp f^*D_Y\subset \Supp D_X$. 
We further assume that $\kappa (Y, K_Y+D_Y)=\dim Y$. 
Then we have 
\begin{align*}
\kappa (X, K_X+D_X)&=\kappa (F, K_F+D_X|_F)+\kappa (Y, K_Y+D_Y)\\
&=\kappa (F, K_F+D_X|_F)+\dim Y, 
\end{align*} 
where $F$ is a sufficiently general fiber of $f:X\to Y$. 

We put $X^0=X\setminus D_X$, $Y^0=Y\setminus D_Y$, and $F^0=F|_{X^0}$. 
Then the above equality is nothing but 
\begin{align*}
\overline {\kappa}(X^0)&=\overline{\kappa}(F^0)+\overline{\kappa}(Y^0)\\
&=\overline{\kappa}(F^0)+\dim Y^0. 
\end{align*}
Note that $\overline {\kappa}$ denotes Iitaka's logarithmic 
Kodaira dimension {\em{(}}see \cite{iitaka2}{\em{)}}. 
\end{thm}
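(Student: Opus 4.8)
The plan is to reduce the asserted equality to the single inequality $\kappa(X, K_X+D_X) \ge \kappa(F, K_F+D_X|_F) + \dim Y$, since the reverse inequality $\kappa(X,K_X+D_X)\le \kappa(F,K_F+D_X|_F)+\dim Y$ is the logarithmic easy addition theorem and needs no hypothesis on the base. For the remaining inequality I would first record the sheaf-theoretic reformulation coming from the projection formula: writing $\mathcal{G}_m:=f_*\mathcal{O}_X\big(m(K_{X/Y}+D_X-f^*D_Y)\big)$, one has $\mathcal{G}_m\otimes\mathcal{O}_Y(m(K_Y+D_Y))\cong f_*\mathcal{O}_X(m(K_X+D_X))$, hence $H^0(X,m(K_X+D_X))=H^0(Y,\mathcal{G}_m\otimes \mathcal{O}_Y(m(K_Y+D_Y)))$. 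Because a sufficiently general fibre $F$ lies over $Y\setminus D_Y$, the vertical divisor $f^*D_Y$ does not meet $F$, so $(K_{X/Y}+D_X-f^*D_Y)|_F=K_F+D_X|_F$ and $\operatorname{rk}\mathcal{G}_m=h^0(F,m(K_F+D_X|_F))$, which grows like $m^{\ell}$ with $\ell=\kappa(F,K_F+D_X|_F)$ along a suitable sequence of $m$.

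The heart of the argument is to make $\mathcal{G}_m$ weakly positive so that Theorem \ref{f-thm1.1} applies. The obstruction is that $D_X-f^*D_Y$ is in general not a boundary: on a component $P$ of $\Supp D_X$ lying over $D_Y$ its coefficient is $1-\operatorname{mult}_P f^*D_Y\le 0$, which is why the hypothesis $\Supp f^*D_Y\subset \Supp D_X$ alone is not enough. To remedy this I would perform a Kawamata-type finite cover (semistable reduction in codimension one) $Y'\to Y$ ramified along $D_Y$, pass to a resolution $X'$ of the fibre product, and thereby reduce to the case in which $f^*D_Y$ is reduced; then $D_X-f^*D_Y$ becomes an honest reduced boundary, $(X,D_X-f^*D_Y)$ is log canonical with Cartier log canonical class, and Theorem \ref{f-thm1.1} gives that $\mathcal{G}_m$ is weakly positive. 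One must of course check that this covering preserves both sides of the formula, i.e.\ that the logarithmic Kodaira dimensions of the total space, of the general fibre, and of the base are controlled under $Y'\to Y$, so that proving the inequality upstairs suffices. With $\mathcal{G}_m$ weakly positive and $K_Y+D_Y$ big (which is exactly $\kappa(Y,K_Y+D_Y)=\dim Y$), Viehweg's principle that a weakly positive sheaf twisted by a big line bundle has many sections yields a lower bound $h^0(Y,\mathcal{G}_m\otimes\mathcal{O}_Y(m(K_Y+D_Y)))\gtrsim \operatorname{rk}(\mathcal{G}_m)\cdot m^{\dim Y}\sim m^{\ell+\dim Y}$, which is the desired inequality. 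The last assertion about $\overline{\kappa}$ then follows by unwinding the identifications $\kappa(X,K_X+D_X)=\overline{\kappa}(X^0)$, $\kappa(F,K_F+D_X|_F)=\overline{\kappa}(F^0)$, and $\kappa(Y,K_Y+D_Y)=\overline{\kappa}(Y^0)=\dim Y^0$.

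The main obstacle I anticipate is precisely this reduction to reduced $f^*D_Y$ and the bookkeeping it entails: one needs a cover that simultaneously trivialises the multiplicities of $f^*D_Y$ along the vertical components contained in $\Supp D_X$ while keeping $X'$, $Y'$, and the general fibre inside the class where both the weak positivity theorem and the easy addition theorem apply, and one must verify via the logarithmic ramification formula that $\overline{\kappa}$ upstairs and downstairs are related so that no Kodaira dimension is lost in the descent. A secondary technical point is making the Viehweg estimate precise, namely bounding $h^0$ of $\mathcal{G}_m\otimes\mathcal{O}_Y(m(K_Y+D_Y))$ from below in terms of $\operatorname{rk}\mathcal{G}_m$ and the bigness of $K_Y+D_Y$; this is where weak positivity is genuinely used and where the hypothesis $\kappa(Y,K_Y+D_Y)=\dim Y$ enters.
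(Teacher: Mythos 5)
Your overall architecture is the right one, and it matches the paper's Section \ref{f-sec10}: easy addition gives the inequality $\leq$, a covering construction makes $D_X-f^*D_Y$ into a boundary so that Theorem \ref{f-thm1.1} applies, and the bigness of $K_Y+D_Y$ then yields the reverse inequality. But there is a genuine gap at the crucial reduction step, namely in the sentence ``pass to a resolution $X'$ of the fibre product, and thereby reduce to the case in which $f^*D_Y$ is reduced.'' First, a Kawamata cover ramified along $D_Y$ produces reduced fibres in codimension one only after $f$ itself has been suitably prepared: the paper first applies the weak semistable reduction theorem of Abramovich--Karu \cite{ak} to make $f$ toroidal and \emph{equidimensional}, and this is not optional bookkeeping --- equidimensionality is also what makes $f_*\mathcal O_X(\cdot)$ reflexive, which the linear-system argument (Proposition \ref{f-prop9.1}, Lemma \ref{f-lem10.3}) needs. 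Second, and more fundamentally, after normalizing the fibre product you must \emph{not} resolve: on the weakly semistable model $X'$ one has $D_{X'}-f'^*D_{Y'}\geq 0$, but on a resolution the pullback of $D_{Y'}$ typically reacquires multiplicities $>1$ along exceptional divisors, so the divisor you want to be a boundary again has negative coefficients --- exactly the obstruction you identified at the outset. This is precisely why Theorem \ref{f-thm1.1} is formulated for log canonical pairs with $X$ possibly singular: the paper applies it directly to the unresolved, toroidal $X'$, where $(X', D_{X'})$ is lc and $D_{X'}-f'^*D_{Y'}$ is effective because $f'$ is weakly semistable.

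On the descent, the paper does not descend the Kodaira-dimension inequality; it descends the weakly positive sheaf, via a Maehara-type inclusion (Lemma \ref{f-lem10.4}, using the flat base change theorem and an $S_2$ argument, and Lemma \ref{f-lem10.5}). Note that what descends is not the bare sheaf $\mathcal G_k$ but
$f_*\mathcal O_X(k(K_X+D_X))\otimes \mathcal O_Y(-k(K_Y+D_Y))\otimes \mathcal O_Y(\Delta_Y)$: a twist by the discriminant divisor $\Delta_Y$ appears because the inclusion is only generically isomorphic, and it is later absorbed by Kodaira's lemma in the form $a(K_Y+D_Y)-\Delta_Y\sim H+G$ (Lemma \ref{f-lem10.2}). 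Your alternative plan --- prove the inequality upstairs and descend the numbers, using $K_{X'}+D_{X'}=\lambda^*(K_X+D_X)$, which indeed holds because the cover is \'etale off $D_Y$ and $\Supp f^*D_Y\subset \Supp D_X$ --- is viable in principle, but again only on the unresolved model, with Proposition \ref{f-prop9.1} applied to the singular equidimensional $X'$ (its hypotheses permit this). Finally, your direct estimate $h^0(Y,\mathcal G_m\otimes \mathcal O_Y(m(K_Y+D_Y)))\gtrsim \operatorname{rk}(\mathcal G_m)\cdot m^{\dim Y}$ is not how weak positivity is exploited here and would need its own proof: the paper instead converts generic global generation of $\widehat S^{2b}(\cdot)\otimes \mathcal O_Y(bH)$ into an inclusion of $f^*\mathcal O_Y(bH)$ into a twisted pluricanonical sheaf and compares the resulting rational map with $f$ via easy addition.
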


We will quickly prove Theorem \ref{f-thm1.7} and Theorem \ref{f-thm1.9} 
in Section \ref{f-sec9} and Section 
\ref{f-sec10} respectively by using Theorem \ref{f-thm1.1} and \cite{ak}. 
In \cite{fujino-sub}, we prove the subadditivity of the logarithmic Kodaira 
dimension for affine varieties. 

We summarize the contents of this paper. 
Section \ref{f-sec2} collects some basic results and definitions. 
In Section \ref{f-sec3}, we prove the fundamental injectivity 
theorem:~Theorem \ref{f-thm1.2}. 
The proof of Theorem \ref{f-thm1.2} uses the theory of mixed Hodge 
structures. 
Section \ref{f-sec4} is devoted to the theory of mixed Hodge 
structures for cohomology with compact support. 
In Section \ref{f-sec5}, we prove Theorem \ref{f-thm1.3} 
and Theorem \ref{f-thm1.4}. These are direct consequences of Theorem \ref{f-thm1.2}. 
In Section \ref{f-sec6}, we explain the semipositivity theorem:~Theorem \ref{f-thm1.5}. 
In Section \ref{f-sec7}, 
we discuss weakly positive sheaves. 
Section \ref{f-sec8} is the main part of this paper. 
It is devoted to the proof of the twisted weak positivity theorem:~Theorem \ref{f-thm1.1}. 
We prove Theorem \ref{f-thm1.7} (resp.~Theorem \ref{f-thm1.9}) 
in Section \ref{f-sec9} (resp.~Section \ref{f-sec10}) as an application of 
Theorem \ref{f-thm1.1}. 

In this paper, we discuss neither Nakayama's sophisticated 
treatment of weak positivity in \cite[Chapter V.~\S 3]{nakayama} 
nor Schnell's results on weak positivity coming from 
Saito's theory of mixed Hodge modules (see \cite{schnell}). 
We naively discuss some generalizations of Viehweg's weak positivity 
following \cite{viehweg1}, \cite{campana}, etc. 
The main motivation of this paper is to understand and clarify 
Viehweg's clever covering arguments 
used for the proof of his famous weak positivity theorem 
(see also \cite{fujino-revisited}). 

\begin{ack}
The author was partially supported 
by Grant-in-Aid for Young Scientists (A) 24684002 from JSPS. 
He would like to thank Professors Akira Fujiki and 
Kazuhisa Maehara for answering his questions. 
He also would like to thank Professor Noboru Nakayama 
for useful comments and discussions. 
He thanks the referees for useful comments. 
Finally, he thanks Yoshinori Gongyo for pointing out a typo. 
\end{ack}

We will use the standard notation of the minimal model 
program as in \cite{fujino2}. 
In this paper, we always assume that complex varieties are Hausdorff 
and countable at infinity. 
For the basic theory of complex varieties, see, for example, \cite{bs}, \cite{fischer}, and 
\cite{nakayama}. The style of this paper 
is the same as that of \cite{fujino2} (see \cite{fujino-book}, \cite{fujino0}, 
\cite{fujino-fujisawa}, \cite{fujino-foundation}, \cite{fujino-zucker65}, 
etc.). 
Our results depend on the theory of variations of mixed 
Hodge structure 
(see \cite{fujino1}, \cite{fujino-fujisawa}, and \cite{ffs}). 

\section{Preliminaries}\label{f-sec2} 

Let us start with some remarks on {\em{canonical divisors}}. 

\begin{say}[Canonical divisors]\label{f-say2.1} 
We consider complex variety $X$, which 
is not necessarily algebraic. 

\begin{rem}\label{f-rem2.2} 
(i) Let $\omega^{\bullet}_X$ be the {\em{dualizing complex}} 
of a complex variety $X$ (see, for example, \cite{rr}, \cite{rrv}, and \cite{bs}). 
We put $\omega_X=\mathcal H^{-d}(\omega^\bullet_X)$, where 
$d=\dim X$, and call it the {\em{canonical sheaf}} of $X$. 
When $X$ is a compact complex manifold, it is well-known that 
$\omega_X\simeq \Omega^{d}_X$. 
For the details of $\omega^\bullet _X$, see, for example, \cite[Chapter VII \S2]{bs}. 

(iii) Some complex variety $X$ does not 
admit any nonzero meromorphic section of $\omega_X$. 
However, if there is no risk of confusion, 
we use the symbol $K_X$ as a formal 
divisor class with an isomorphism $\mathcal O_X(K_X)\simeq \omega_X$ and 
call it the {\em{canonical divisor}} of $X$. See \cite[Chapter II.~\S 4]{nakayama}. 
\end{rem}

\begin{rem}\label{f-rem2.3}
Let $D$ be a Cartier divisor and let $\mathcal L$ be a 
line bundle on a complex variety $X$. 
If there is no risk of confusion, we sometimes write 
$$
\mathcal O_X(K_X+D+\mathcal L)
$$
in order to express 
$$
\omega_X\otimes \mathcal O_X(D)\otimes \mathcal L. 
$$
For simplicity, we sometimes use $\mathcal L^N$ to denote $\mathcal L^{\otimes N}$ 
if there is no risk of confusion. 
\end{rem}
\end{say}

In this paper, all complex varieties are {\em{algebraic}} or {\em{compact}}. 
Therefore, there are no subtle problems in the following definitions.  

\begin{say}[Singularities of pairs]\label{f-say2.4} 
Let us recall the definition of singularities of pairs. 

Let $X$ be a normal variety and let $\Delta$ be an effective $\mathbb R$-divisor 
on $X$ such that $K_X+\Delta$ is $\mathbb R$-Cartier. 
Let $f:Y\to X$ be a resolution such that 
$\Exc (f)\cup f^{-1}_*\Delta$ has a simple normal crossing 
support, where $\Exc(f)$ is the 
exceptional locus of $f$ and 
$f^{-1}_*\Delta$ is the strict transform of $\Delta$ on $Y$. 
We write $$K_Y=f^*(K_X+\Delta)+\sum _i a_i E_i$$ and 
$a(E_i, X, \Delta)=a_i$. 
We say that $(X, \Delta)$ is {\em{lc}} if and only if 
$a_i\geq -1$ for every $i$. 
Note that the {\em{discrepancy}} $a(E, X, \Delta)\in \mathbb R$ can be 
defined for every prime divisor $E$ {\em{over}} $X$. 
It is well-known that $(X, \Delta)$ is lc if and only if 
$a(E, X, \Delta)\geq -1$ for every 
prime divisor $E$ over $X$. 
Let $(X, \Delta)$ be an lc pair. If there is a resolution $f:Y\to X$ such that 
$\Exc(f)$ is a divisor, $\Exc(f)\cup f^{-1}_*\Delta$ has 
a simple normal crossing support, 
and $a(E, X, \Delta)>-1$ for every $f$-exceptional 
divisor $E$, then $(X, \Delta)$ is called {\em{dlt}}.  
Here, lc (resp.~dlt) is an abbreviation of 
{\em{log canonical}} (resp.~{\em{divisorial log terminal}}). 

For the details and various examples of 
singularities of pairs, see, for example, \cite{fujino-what} 
(see also \cite[Section 2.3]{fujino-foundation}). 

\begin{rem}[Szab\'o's resolution lemma]\label{f-rem2.5} 
We note that Szab\'o's resolution lemma (see, for example, \cite[3.5 Resolution 
lemma]{fujino-what}) now holds for compact complex varieties. 
For the details, see, for example, \cite[Theorem 10.45, Proposition 10.49, and 
the proof of (10.45)]{kollar-book}. 
We will use Szab\'o's resolution lemma repeatedly in this paper. 
\end{rem}

Let us recall the definition of {\em{log canonical centers}}. 

\begin{defn}[Log canonical center]\label{f-def2.6} 
Let $(X, \Delta)$ be a log canonical pair. 
If there is a resolution $f:Y\to X$ and a prime divisor $E$ on 
$Y$ such that $a(E, X, \Delta)=-1$, then $f(E)$ is 
called a {\em{log canonical center}} of $(X, \Delta)$. 
\end{defn} 

Definition \ref{f-def2.7} is useful for torsion-free theorem. 

\begin{defn}[Log canonical stratum]\label{f-def2.7}  
Let $(X, \Delta)$ be a log canonical pair. 
A {\em{log canonical stratum}} (an {\em{lc stratum}}, for 
short) of $(X, \Delta)$ is $X$ itself or a 
log canonical center of $(X, \Delta)$. 
Note that $X$ is a log canonical stratum of $(X, \Delta)$ but 
is not a log canonical center of $(X, \Delta)$. 
\end{defn}

\begin{say}[Divisors] 
Let us recall some basic operations for $\mathbb Q$-divisors and 
$\mathbb R$-divisors. 

For an $\mathbb R$-divisor $D=\sum _{i=1}^rd_i D_i$ such that 
$D_i$ is a prime divisor for every $i$ and $D_i\ne D_j$ for 
$i\ne j$, we define 
the {\em{round-down}} $\lfloor D\rfloor=\sum _{i=1}^r\lfloor d_i \rfloor 
D_i$ 
(resp.~the {\em{round-up}} $\lceil D\rceil =\sum _{i=1}^r\lceil d_i \rceil D_i$), 
where for every real number $x$, $\lfloor x\rfloor$ (resp.~$\lceil x\rceil$) is the integer 
defined by 
$x-1<\lfloor x\rfloor \leq x$ (resp.~$x\leq \lceil x\rceil <x+1$). 
The {\em{fractional part}} $\{D\}$ of $D$ denotes 
$D-\lfloor D\rfloor$. 
We also define $D^{=1}=\sum _{d_i=1}D_i$. 
We call $D$ a {\em{boundary}} $\mathbb R$-divisor if $0\leq d_i\leq 1$ for 
every $i$. 

\begin{rem}\label{f-rem2.9}
Let $X$ be a compact complex manifold and let $D_1$, $D_2$, $\cdots$, $D_k$ be 
Cartier divisors on $X$. 
We consider the linear map 
$$
\varphi: \mathbb R^k\longrightarrow \Pic (X)\otimes \mathbb R
$$ 
defined by $\varphi(r_1, r_2, \cdots, r_k)=r_1D_1+r_2D_2+\cdots +r_k D_k$, 
which is defined over $\mathbb Q$. 
Let $\mathcal L$ be a line bundle on $X$. 
Then $\mathcal L\sim _{\mathbb R}\sum _{i=1}^{k}r_iD_i$ means 
$\mathcal L=\varphi(r_1, r_2, \cdots, r_k)$ in $\Pic (X)\otimes 
\mathbb R$. 
Note that $\varphi^{-1}(\mathcal L)$ is an affine subspace of 
$\mathbb R^k$ defined over $\mathbb Q$. 
Therefore, we can find $(r'_1, r'_2, \cdots, r'_k)\in \mathbb Q^k$ such that 
$\mathcal L\sim _{\mathbb Q}\sum _{i=1}^{k}r'_i D_i$, that is, 
$\mathcal L=\varphi (r'_1, r'_2, \cdots, r'_k)$ in $\Pic (X)\otimes \mathbb Q$ 
if $\varphi^{-1}(\mathcal L)$ is not empty. 
\end{rem}
\end{say}
\end{say}

\begin{say}[Fujiki's class $\mathcal C$] 

In this paper, we use the notion of complex varieties in Fujiki's class $\mathcal C$. 

\begin{defn}[Fujiki's class $\mathcal C$]
Let $X$ be a compact reduced complex analytic space. Then $X$ is {\em{in Fujiki's class 
$\mathcal C$}} if and only if there is a surjective morphism 
$f:Y\to X$ with $Y$ a compact K\"ahler manifold. 
It is well-known that $X$ is in Fujiki's class $\mathcal C$ if and only if 
there is a bimeromorphic morphism $g:V\to X$ from a compact K\"ahler manifold $V$ 
(see, for example, \cite[Th\'eor\`eme 3]{var}).  
\end{defn}

It is well-known that some basic results 
on the minimal model program can be generalized for 
varieties in Fujiki's class $\mathcal C$. See \cite{nakayama1}, 
\cite[Section 4]{fujino-kawa}, etc. 

\begin{rem}\label{f-rem2.12}
For the details of complex varieties in Fujiki's class $\mathcal C$, 
(locally) K\"ahler morphisms, and so on, see \cite{fujiki}, \cite{fujiki2}, and \cite{var}. 
Note that every (locally) projective morphism is (locally) 
K\"ahler and that the composition of two locally K\"ahler 
morphisms is again locally K\"ahler 
(see \cite[(1.2), (2.1), (2.2), and so on]{fujiki2}). 
\end{rem}
\end{say}

\begin{say}[Simple normal crossing varieties] 
In Section \ref{f-sec4}, we will use the {\em{Mayer--Vietoris simplicial resolution}} 
of a simple normal crossing variety $X$ in order to discuss various 
mixed Hodge structures. 

\begin{defn}[Mayer--Vietoris simplicial resolution]\label{f-def2.14}
Let $X$ be a simple normal crossing variety with 
the irreducible decomposition $X=\bigcup_{i\in I}X_i$. 
Let $I_n$ be the set of strictly increasing 
sequences $(i_0, \cdots, i_n)$ in $I$ and $X^n=\coprod_{I_n} X_{i_0}\cap 
\cdots \cap X_{i_n}$ the disjoint union of the intersections of 
$X_i$. 
Let $\varepsilon_n:X^n \to X$ be the disjoint union 
of the natural inclusions. 
Then $\{X^n, \varepsilon_n\}_n$ 
has a natural semi-simplicial structure. 
The face operator is induced by $\lambda_{j,n}$, where 
$$\lambda_{j, n}: X_{i_0}\cap \cdots 
\cap X_{i_n}\to 
X_{i_0}\cap \cdots \cap X_{i_{j-1}}\cap X_{i_{j+1}}\cap 
\cdots \cap X_{i_n}$$ is the natural closed embedding for 
$j\leq n$ (cf.~\cite[3.5.5]{elzein2}). 
We denote it by $\varepsilon:X^{\bullet}\to X$ and call it the 
{\em{Mayer--Vietoris simplicial resolution}} of 
$X$. The complex 
$$
0\to \varepsilon_{0*}\mathcal O_{X^0}
\to \varepsilon _{1*}\mathcal O_{X^1} \to 
\cdots \to \varepsilon _{k*}\mathcal O_{X^k}\to \cdots,  
$$ 
where the differential 
$d_k:\varepsilon _{k*}\mathcal O_{X^k}\to 
\varepsilon _{k+1*}\mathcal O_{X^{k+1}}$ is 
$\sum ^{k+1}_{j=0} (-1)^j 
\lambda^* _{j, k+1}$ for every $k\geq 0$, 
is denoted by $\mathcal O_{X^\bullet}$.  
We see that $\mathcal O_{X^\bullet}$ is 
quasi-isomorphic to $\mathcal O_X$. 
By tensoring $\mathcal L$, any line bundle 
on $X$, 
to $\mathcal O_{X^\bullet}$, we obtain a complex 
$$
0\to \varepsilon_{0*}\mathcal L^0
\to \varepsilon _{1*}\mathcal L^1 \to 
\cdots \to \varepsilon _{k*}\mathcal L^k\to \cdots,  
$$ 
where $\mathcal L^n=\varepsilon ^*_n\mathcal L$. 
Here, $\mathcal L^n$ does not mean $\mathcal L^{\otimes n}$ 
(see Remark \ref{f-rem2.3}). 
It is denoted by $\mathcal L^{\bullet}$. 
Of course, $\mathcal L^{\bullet}$ is quasi-isomorphic to 
$\mathcal L$. 
We note that 
$\mathbb H^q(X, \mathcal L^{\bullet})$ 
is obviously isomorphic to 
$H^q(X, \mathcal L)$ for every $q\geq 0$ 
because $\mathcal L^\bullet$ is quasi-isomorphic to $\mathcal L$. 

We note that a {\em{stratum}} 
of $X$ means an irreducible component of $X_{i_0}\cap \cdots \cap X_{i_k}$ 
for some $\{i_0, \cdots, i_k\}\subset I$. 
If $X$ is a simple normal crossing divisor on a smooth 
variety $M$, then a stratum of $X$ is nothing but a log canonical 
center of $(M, X)$. 
\end{defn}
\end{say}

\begin{say}[Flat base change theorem]\label{f-say2.15}
In the proof of Theorem \ref{f-thm1.9}, we will use the flat base change 
theorem for relative dualizing sheaves (see \cite[\S 3]{viehweg1} and 
\cite[Section 4]{mori}). We need the following statement. 

\begin{thm}\label{f-rem2.16} 
Let $f:V\to W$ be a flat projective surjective morphism 
from a Cohen--Macaulay quasi-projective 
variety $V$ to a smooth quasi-projective variety $W$. 
Let $g:W'\to W$ be a finite 
flat morphism 
from a smooth quasi-projective variety $W'$. 
We consider the following diagram: 
$$
\xymatrix{
V'\ar[r]^{h} \ar[d]_{f'} & V\ar[d]^{f} \\
   W' \ar[r]_{g} & W
} 
$$
where $V'=W'\times _W V$. 
Then 
we have 
$$h^*\omega_{V/W}=\omega_{V'/W'}.$$
Note that 
$$
\omega_{V/W}=\omega_V\otimes f^*\omega_W^{\otimes -1} 
\quad \text{and}
\quad 
\omega_{V'/W'}=\omega_{V'}\otimes f'^*
\omega_{W'}^{\otimes -1}. 
$$
\end{thm}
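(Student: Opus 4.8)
The plan is to reduce the assertion to a base-change statement for the finite flat morphism $h$ and then to verify that statement by Grothendieck duality; since $V$ and $W$ are honest quasi-projective varieties, I may work entirely in the algebraic (scheme-theoretic) setting. First I would record the geometry of the Cartesian square. As $g$ is finite flat, its base change $h\colon V'\to V$ is again finite flat, and as $f$ is flat projective so is $f'\colon V'\to W'$; moreover $V'$ is Cohen--Macaulay, because $h$ is finite flat with zero-dimensional (hence Cohen--Macaulay) fibres over the Cohen--Macaulay variety $V$. Since $W$ and $W'$ are smooth, $\omega_W$ and $\omega_{W'}$ are invertible, and therefore the relative dualizing sheaf $\omega_{W'/W}=\omega_{W'}\otimes g^*\omega_W^{\otimes -1}$ of the finite flat morphism $g$ between smooth varieties is an invertible sheaf as well.

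Next I would unwind the two definitions. Using $\omega_{V/W}=\omega_V\otimes f^*\omega_W^{\otimes -1}$ and $\omega_{V'/W'}=\omega_{V'}\otimes f'^*\omega_{W'}^{\otimes -1}$, together with the relation $f\circ h=g\circ f'$ and $\omega_{W'}=g^*\omega_W\otimes\omega_{W'/W}$, a direct manipulation in the Picard group (cancelling the invertible factor $f'^*g^*\omega_W^{\otimes -1}$) shows that the desired identity $h^*\omega_{V/W}=\omega_{V'/W'}$ is equivalent to
\[
\omega_{V'}\;\simeq\; h^*\omega_V\otimes f'^*\omega_{W'/W}.
\]
Here $f'^*\omega_{W'/W}$ is an invertible sheaf, so the right-hand side is well defined even though $\omega_V$ and $\omega_{V'}$ need not be invertible.

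To prove this last isomorphism I would invoke Grothendieck duality for the finite flat morphism $h$, which gives $\omega_{V'}\simeq\mathcal{H}om_{\mathcal O_V}(h_*\mathcal O_{V'},\omega_V)$ as an $\mathcal O_{V'}$-module, since $h_*\mathcal O_{V'}$ is locally free over $\mathcal O_V$ and $V$ is Cohen--Macaulay. From the Cartesian square (using that $g$ is flat) one has $h_*\mathcal O_{V'}\simeq f^*g_*\mathcal O_{W'}$, while $\omega_{W'/W}\simeq\mathcal{H}om_{\mathcal O_W}(g_*\mathcal O_{W'},\mathcal O_W)$ for the finite flat $g$. Feeding these in and applying the identity $\mathcal{H}om_{\mathcal O_V}(f^*\mathcal E,\omega_V)\simeq f^*\mathcal{H}om_{\mathcal O_W}(\mathcal E,\mathcal O_W)\otimes\omega_V$ for the locally free sheaf $\mathcal E=g_*\mathcal O_{W'}$ yields the required isomorphism once the $\mathcal O_{V'}$-structures are matched.

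The main obstacle is precisely this last matching: one must keep track of the $\mathcal O_{V'}$-module structures on both sides and check that duality along $h$ and pullback of $\omega_{W'/W}$ along $f'$ agree as $\mathcal O_{V'}$-modules, not merely as $\mathcal O_V$-modules. I expect to settle this affine-locally, writing $W=\operatorname{Spec}A$, $W'=\operatorname{Spec}A'$, $V=\operatorname{Spec}B$ and $V'=\operatorname{Spec}(B\otimes_A A')$ with $A'$ finite locally free over $A$, and using the adjunctions $\operatorname{Hom}_B(B\otimes_A A',\omega_B)\simeq\operatorname{Hom}_A(A',\omega_B)\simeq\operatorname{Hom}_A(A',A)\otimes_A\omega_B$; the $B\otimes_A A'$-structure then matches that of $f'^*\omega_{W'/W}\otimes h^*\omega_V$ exactly. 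Alternatively, the whole statement is the instance for the flat morphism $g$ of the general fact that the relative dualizing complex $f^!\mathcal O_W$ commutes with flat base change on the target, but I prefer the explicit finite-flat computation, which keeps everything at the level of genuine coherent sheaves and avoids the derived formalism.
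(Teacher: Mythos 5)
Your proof is correct, but it takes a genuinely different route from the paper, which in fact offers no argument at all for Theorem \ref{f-rem2.16}: immediately after the statement (see \ref{f-say2.15}) the paper simply observes that it is a very special case of the flat base change theorem for the twisted inverse image, citing \cite[Theorem 2]{verdier} together with \cite{hartshorne} and \cite{conrad}. That route sets $\omega^\bullet_{V/W}=f^!\mathcal O_W$, applies base change along the flat morphism $g$ to get $h^*f^!\mathcal O_W\simeq f'^{!}\mathcal O_{W'}$, and then uses flatness of $f$, the Cohen--Macaulay hypothesis, and smoothness of $W$, $W'$ to identify these complexes with the sheaves $\omega_{V/W}$ and $\omega_{V'/W'}$ --- the alternative you yourself name and decline in your last sentence. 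What you do instead is exploit the finiteness of $g$, hence of $h$: after the (correct) reduction of the claim to $\omega_{V'}\simeq h^*\omega_V\otimes f'^*\omega_{W'/W}$, you need only duality for the finite flat morphism $h$, namely $h_*\omega_{V'}\simeq\mathcal{H}om_{\mathcal O_V}(h_*\mathcal O_{V'},\omega_V)$ (valid since $h_*\mathcal O_{V'}$ is locally free and $\omega^\bullet_V=\omega_V[\dim V]$ on the Cohen--Macaulay $V$, so no higher $\mathcal{E}xt$ appears), the identification $h_*\mathcal O_{V'}\simeq f^*g_*\mathcal O_{W'}$ (which, incidentally, already follows from affineness of $g$ together with flatness of $f$; flatness of $g$ is not the relevant input there), and the affine-local matching of $\mathcal O_{V'}$-module structures via $\operatorname{Hom}_B(B\otimes_AA',\omega_B)\simeq\operatorname{Hom}_A(A',A)\otimes_A\omega_B$, which is indeed the crux and which your adjunction argument settles correctly since $A'$ is finite projective over $A$ and all the isomorphisms involved are natural, hence glue. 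The trade-off is clear: your argument stays at the level of genuine coherent sheaves and uses only finite duality plus elementary commutative algebra, but it relies essentially on $g$ being finite, which the theorem's hypotheses supply; Verdier's theorem handles an arbitrary flat $g$ in one line at the price of the full derived $f^!$ formalism --- whose availability in the analytic category is, as the paper stresses, precisely the open point that forces the quasi-projective hypotheses here, so your decision to work algebraically is exactly in the spirit of the statement. One small point you might add for completeness: $V'=W'\times_WV$ is reduced, since it is Cohen--Macaulay (as you show) and generically reduced, $h$ being \'etale over a dense open subset of $V$ because $g$ is generically \'etale in characteristic zero.
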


Theorem \ref{f-rem2.16} is a very special case of the 
flat base change theorem (see \cite[Theorem 2]{verdier}). 
See also \cite{hartshorne}, \cite{conrad}, etc. 
The author does not know if the flat base change theorem 
(\cite[Theorem 2]{verdier}) is true or not 
in the analytic category (cf.~\cite{rr} and \cite{rrv}). 
Therefore, 
we do not use the flat base change theorem in the proof of 
Theorem \ref{f-thm1.1} (see \cite[Lemma 3.2]{viehweg1} and 
\cite[(4.10) Base change theorem]{mori}). 
Note that $X$ in Theorem \ref{f-thm1.1} is not necessarily {\em{algebraic}}. 
\end{say} 

\begin{say}[Relative vanishing theorems]\label{f-say2.17} 
The following theorem is a relative version of the Kawamata--Viehweg 
vanishing theorem for generically finite morphisms. 

\begin{thm}[{cf.~\cite[Theorem 3.6]{nakayama1}}]\label{f-thm2.18} 
Let $f:X\to Y$ be a proper generically finite 
morphism from a compact complex manifold $X$ onto a complex variety $Y$ and 
let $\Delta$ be a $\mathbb Q$-divisor on $X$ such that 
$\Supp \Delta$ is a simple normal crossing divisor and $\lfloor \Delta\rfloor=0$. 
Let $\mathcal L$ be a line bundle on $X$. 
Assume that $\mathcal L-(K_X+\Delta)$ 
is $f$-nef. 
Then $R^if_*\mathcal L=0$ for every $i>0$. 
\end{thm}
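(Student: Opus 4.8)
The plan is to recognize the statement as the \emph{relative Kawamata--Viehweg--Nadel vanishing theorem} for a generically finite morphism and to reduce it to relative Grauert--Riemenschneider vanishing, which is available in the compact complex-analytic category. First I would set $D:=\mathcal L-(K_X+\Delta)$, so that $\mathcal L\simeq \mathcal O_X(K_X+\Delta+D)$ with $K_X+\Delta+D$ Cartier; by hypothesis $D$ is an $f$-nef $\mathbb Q$-divisor, and since $f$ is generically finite its general fibre is zero-dimensional, so $D$ is automatically $f$-big. Thus the assertion is exactly relative Kawamata--Viehweg vanishing with $\lfloor\Delta\rfloor=0$, the only real content being that it must hold without any K\"ahler or projectivity hypothesis on $X$. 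The claim is local on $Y$ and each $R^if_*\mathcal L$ is coherent, so it suffices to kill the stalks; note that over the dense open set $U\subseteq Y$ where $f$ is finite one has $R^if_*\mathcal L|_U=0$ for $i>0$, so $\Supp R^if_*\mathcal L$ is already a nowhere-dense analytic subset and only the positive-dimensional fibres matter.

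The engine is the following fact: for a proper \emph{generically finite} morphism $\varphi$ from a complex manifold, relative Grauert--Riemenschneider vanishing makes the higher direct images of the canonical sheaf vanish, i.e. $R^i\varphi_*\omega_{\bullet}=0$ for all $i>0$, because the generic relative dimension is $0$; crucially this holds analytically, with no positivity assumption and despite the presence of positive-dimensional fibres. I would therefore build a finite cover $\pi:W\to X$ with $W$ smooth such that $\mathcal L$ is a direct summand of $\pi_*\omega_W$. Granting this, the composite $f\circ\pi:W\to Y$ is again proper, surjective, and generically finite, so $R^i(f\circ\pi)_*\omega_W=0$ for $i>0$; since $\pi$ is finite we have $R^{j}\pi_*=0$ for $j>0$ and hence $R^i(f\circ\pi)_*\omega_W\simeq R^if_*(\pi_*\omega_W)$, whence $R^if_*\mathcal L$, being a direct summand of the right-hand side, must vanish. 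The cover $\pi$ is produced by Kawamata's covering construction: a cyclic cover along the components of the fractional boundary trivialises $\{\Delta\}$, while a further cover absorbing a multiple of $D$ arranges the summand decomposition, the hypotheses that $\lfloor\Delta\rfloor=0$ and $\Supp\Delta$ be simple normal crossing keeping $W$ smooth, as in \cite{viehweg1}.

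The main obstacle is the construction of the cover that absorbs $D$: this requires an effective divisor representing a multiple of the $\mathbb Q$-divisor $D$ relatively over $Y$, which in the projective setting one obtains from $f$-bigness by Kodaira's lemma after perturbing the $f$-nef class by a small $f$-ample one. In the present analytic generality a generically finite (indeed bimeromorphic) morphism need not be projective, so there is in general no $f$-relatively ample line bundle and this perturbation is unavailable. I expect to circumvent this exactly as in \cite{nakayama1}: dominate $f$ by a projective generically finite morphism via Hironaka's analytic Chow lemma, carry out the covering-and-Kodaira argument relatively over Stein open subsets of $Y$ where the dominating morphism is projective, and transport the resulting vanishing back to $X$ through relative Grauert--Riemenschneider. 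Verifying that both the covering construction and Grauert--Riemenschneider vanishing remain valid in the analytic category, and that $f$-nefness together with the automatic $f$-bigness suffices, is the technical heart of the argument and is precisely the content of \cite[Theorem 3.6]{nakayama1}.
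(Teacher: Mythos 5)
The paper itself gives no proof of Theorem \ref{f-thm2.18}: it simply records that the statement is a special case of \cite[Corollary 1.3]{fujino3}, whose proof is by transcendental (harmonic/$L^2$) methods valid over non-algebraic bases, and refers to \cite{nakayama1} for details. Measured against that, the sound part of your sketch is the Grauert--Riemenschneider half: Stein factorization reduces a proper generically finite morphism to a proper modification followed by a finite map, and $R^i\varphi_*\omega=0$ for $i>0$ does hold in the analytic category in that setting, so \emph{if} you could produce a finite cover $\pi:W\to X$ with $\mathcal L$ a direct summand of $\pi_*\omega_W$, the conclusion would follow as you say.

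The gap is that this cover cannot be constructed by the method you describe, and your fallback is circular. To absorb $D=\mathcal L-(K_X+\Delta)$ into a branched cover you must represent some multiple of $D$ (together with the fractional part of $\Delta$) by an effective divisor, at least locally over $Y$; but $D$ is merely an $f$-nef class on a compact complex manifold that need not be algebraic or K\"ahler, so no multiple of $D$ need admit a single nonzero section, and a non-projective generically finite $f$ carries no $f$-ample class with which to run the relative Kodaira-lemma perturbation --- the very obstruction you flag. Your proposed repair does not close it: after dominating $f$ by a projective morphism via the analytic Chow lemma, the exceptional-twist and Leray bookkeeping (the pattern of Lemma \ref{f-lem7.14} and Proposition \ref{f-prop7.15}) only reduces you to relative Kawamata--Viehweg vanishing for \emph{projective} morphisms onto a non-algebraic complex space, and the paper stresses in \ref{f-say2.17} that this is exactly the hard case: over projective $Y$ the relative vanishing follows from the absolute theorem, while over a merely K\"ahler or analytic base it is ``much harder to prove,'' and Stein-local covering arguments supply none of the needed global positivity. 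Your final sentence then delegates precisely this technical heart to \cite[Theorem 3.6]{nakayama1} --- but that theorem \emph{is} the statement under proof (the paper presents Theorem \ref{f-thm2.18} as ``cf.~\cite[Theorem 3.6]{nakayama1}''), so as written your argument assumes its own conclusion at the decisive step. What would actually discharge it is Nakayama's argument itself, or the transcendental proof of \cite[Corollary 1.3]{fujino3} that the paper invokes.
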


Theorem \ref{f-thm2.18} is a special case of \cite[Corollary 1.3]{fujino3}. 
For the details, see \cite{nakayama1}, \cite{fujino3}, etc. 
Lemma \ref{f-lem2.19}, which is an easy consequence of Theorem \ref{f-thm2.18}, 
is very useful and indispensable. 

\begin{lem}[{Reid--Fukuda type (see \cite[Lemma]{fukuda})}]
\label{f-lem2.19} 
Let $X$ be a compact complex manifold and let $\Delta$ be a boundary 
$\mathbb Q$-divisor on $X$ such that $\Supp \Delta$ is a simple normal 
crossing divisor on $X$. Let $f:X\to Y$ be a bimeromorphic 
morphism onto a compact complex variety $Y$. 
Assume that $f$ is an isomorphism at the general points of any 
log canonical center of $(X, \Delta)$ 
and that $\mathcal L$ is a line bundle 
on $X$ such that $\mathcal L-(K_X+\Delta)$ is $f$-nef. 
Then $R^if_*\mathcal L=0$ for every $i>0$. 
\end{lem}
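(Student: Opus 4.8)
The plan is to run a double induction, on $\dim X$ and on the number of irreducible components of $\lfloor \Delta\rfloor=\Delta^{=1}$, reducing in the end to Theorem \ref{f-thm2.18}. First I would treat the base case $\lfloor\Delta\rfloor=0$. Since $f$ is bimeromorphic it is in particular proper and generically finite, $\Supp\Delta$ is simple normal crossing, $\lfloor\Delta\rfloor=0$, and $\mathcal L-(K_X+\Delta)$ is $f$-nef; hence Theorem \ref{f-thm2.18} applies verbatim and gives $R^if_*\mathcal L=0$ for every $i>0$. Here the hypothesis on log canonical centers is vacuous, since $(X,\Delta)$ is then klt with $X$ as its only log canonical stratum.

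For the inductive step, suppose $\lfloor\Delta\rfloor\neq 0$ and pick an irreducible component $S$ of $\Delta^{=1}=\lfloor\Delta\rfloor$. As $\Supp\Delta$ is simple normal crossing, $S$ is a smooth Cartier divisor and is itself a log canonical center of $(X,\Delta)$. I would tensor the structure sequence of $S$ by $\mathcal O_X(\mathcal L)$ to obtain
$$
0\to \mathcal O_X(\mathcal L-S)\to \mathcal O_X(\mathcal L)\to \mathcal O_S(\mathcal L|_S)\to 0,
$$
and apply $Rf_*$. Writing $\Delta'=\Delta-S$, a boundary $\mathbb Q$-divisor with one fewer coefficient-one component, we have $(\mathcal L-S)-(K_X+\Delta')=\mathcal L-(K_X+\Delta)$, which is still $f$-nef, and the log canonical centers of $(X,\Delta')$ are among those of $(X,\Delta)$, so $f$ is still an isomorphism at their general points. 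Thus induction on the number of components of $\lfloor\Delta\rfloor$ yields $R^if_*\mathcal O_X(\mathcal L-S)=0$ for every $i>0$.

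Next I would handle the restriction term. Because $S$ is a log canonical center, the assumption forces $f$ to be an isomorphism at the general point of $S$, so $f|_S\colon S\to f(S)$ is again bimeromorphic; moreover $R^if_*\mathcal O_S(\mathcal L|_S)=R^i(f|_S)_*\mathcal O_S(\mathcal L|_S)$ because pushforward along the closed immersion $S\hookrightarrow X$ is exact. By adjunction $K_S+\Delta_S=(K_X+\Delta)|_S$ with $\Delta_S=\Delta'|_S$ a boundary $\mathbb Q$-divisor of simple normal crossing support on the smooth surface-or-higher $S$, and $\mathcal L|_S-(K_S+\Delta_S)=(\mathcal L-(K_X+\Delta))|_S$ is $(f|_S)$-nef. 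The log canonical centers of $(S,\Delta_S)$ are precisely those log canonical centers of $(X,\Delta)$ contained in $S$, on whose general points $f$, hence $f|_S$, is an isomorphism. Therefore $(S,\Delta_S,f|_S,\mathcal L|_S)$ is a genuine instance of the lemma in dimension $\dim X-1$, and induction on dimension gives $R^i(f|_S)_*\mathcal O_S(\mathcal L|_S)=0$ for every $i>0$. Feeding both vanishings into the long exact sequence of $Rf_*$ squeezes $R^if_*\mathcal O_X(\mathcal L)$ between two zeros for every $i>0$, completing the induction.

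The main obstacle is the bookkeeping in the inductive step: verifying that both $(X,\Delta')$ and, above all, the adjunction datum $(S,\Delta_S,f|_S)$ truly satisfy every hypothesis. This is exactly where the assumption is used in an essential way — it is the condition that $f$ be an isomorphism at the general points of log canonical centers that keeps $f|_S$ bimeromorphic, rather than contracting the center $S$ or being generically finite of higher degree, and this is what permits the lower-dimensional instance of the lemma to be invoked.
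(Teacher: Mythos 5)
Your proposal is correct and is exactly the paper's argument: the paper proves Lemma \ref{f-lem2.19} by the same double induction on the number of irreducible components of $\lfloor\Delta\rfloor$ and on $\dim X$, reducing to Theorem \ref{f-thm2.18}, and refers to the proof of \cite[Lemma 6.2]{fujino2} for the details, which consist of precisely your structure-sequence and adjunction bookkeeping. Your fleshed-out verification that $(X,\Delta-S)$ and $(S,\Delta_S,f|_S)$ satisfy all hypotheses is the content the paper leaves as an exercise.
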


\begin{proof}
By using induction on the number of irreducible components 
of $\lfloor \Delta\rfloor$ and on the dimension of 
$X$, 
we can quickly prove Lemma \ref{f-lem2.19} by Theorem \ref{f-thm2.18}. 
For the details, see, for example, the proof of \cite[Lemma 6.2]{fujino2}. 
\end{proof}

We close this section with a remark on the relative Kawamata--Viehweg vanishing 
theorem. Anyway, the proof of Theorem \ref{f-thm2.18} when $Y$ is not 
algebraic is much harder than the case when $Y$ is algebraic. 

\begin{rem}[Projective versus K\"ahler]
We are mainly interested in {\em{projective}} varieties. 
This is because the minimal model program works well only for {\em{projective}} 
varieties. However, in this paper, we treat K\"ahler manifolds and  
complex varieties in Fujiki's class $\mathcal C$ in order to 
cover Campana's result (see \cite[Theorem 4.13]{campana}, which is 
essentially equivalent 
to Theorem \ref{f-thm1.1}). 
If the reader is only interested in {\em{projective}} varieties, 
then we recommend the reader to read this paper assuming 
that all the varieties are {\em{projective}}. 
For the minimal model program for compact K\"ahler manifolds and 
some related topics, see \cite{chp}, \cite{fujino-someMMP}, 
\cite{hp1}, \cite{hp2}, etc. 

Let $f:X\to Y$ be a projective bimeromorphic morphism 
from a compact complex manifold $X$ to a compact K\"ahler manifold $Y$. 
Let $D$ be an $f$-nef Cartier divisor on $X$ such that 
the support of the fractional part $\{D\}$ of $D$ is a simple 
normal crossing divisor on $X$. 
Then $R^if_*\mathcal O_X(K_X+\lceil D
\rceil)=0$ for 
every $i>0$ by Theorem \ref{f-thm2.18}. 

If $Y$ is {\em{projective}}, then the above vanishing easily follows 
from the usual 
Kawamata--Viehweg vanishing theorem for {\em{projective}} varieties 
(see \cite[Proposition 2.69]{kollar-mori} and 
the proof of Proposition \ref{f-prop7.15} below). 
This means that the relative vanishing theorem 
follows from the vanishing theorem for projective varieties. 
On the other hand, if $Y$ is K\"ahler but not projective, 
then the above vanishing theorem is much harder to prove. 
\end{rem}
\end{say}

\section{Fundamental injectivity theorem}\label{f-sec3} 
In this section, we prove Theorem \ref{f-thm1.2}. 
Theorem \ref{f-thm1.2} is a direct consequence of 
the $E_1$-degeneration of Hodge to de Rham spectral sequence 
associated to the 
mixed Hodge structure for cohomology with compact support. 
We discuss the $E_1$-degeneration in Section \ref{f-sec4}. 

\begin{proof}[Proof of Theorem \ref{f-thm1.2}]
Without loss of generality, we may assume 
that $X$ is connected. We put $S=\lfloor \Delta\rfloor$ and $B=\{\Delta\}$. 
By perturbing $B$, 
we may assume that $B$ is a $\mathbb Q$-divisor (see Remark \ref{f-rem2.9}).  
We put $\mathcal M=\mathcal O_X(\mathcal L-K_X-S)$. 
Let $N$ be the smallest positive integer 
such that $\mathcal L^N\sim N(K_X+ S+ B)$. 
In particular, $N B$ is an integral Weil divisor. 
We take the $N$-fold cyclic cover 
$$\pi': Y'=\Spec \bigoplus _{i=0}^{N-1} \mathcal M^{-i}\to 
X$$ associated to 
the section $N B\in |\mathcal M^{N}|$. 
More precisely, let $s\in H^0(X, \mathcal M^{N})$ be a section 
whose zero divisor is $N B$. 
Then the dual of $s:\mathcal O_X\to \mathcal M^{N}$ 
defines an $\mathcal O_X$-algebra structure on 
$\bigoplus ^{N-1}_{i=0} \mathcal M^{-i}$. 
Let $Y\to Y'$ be the normalization and 
let $\pi:Y\to X$ be the composition morphism. 
It is well-known that 
$$
Y=\Spec\bigoplus _{i=0}^{N-1}\mathcal M^{-i}(\lfloor iB\rfloor). $$  
For the details, see \cite[3.5.~Cyclic covers]{ev}. 
Note that $Y$ has only quotient singularities. 
We put $T=\pi^*S$. 
We note that 
$T$ is Cartier. 
Hence the locally free sheaf $\mathcal O_Y(-T)$ is the defining ideal sheaf of $T$ on $Y$. 
The $E_1$-degeneration of 
\begin{align*}\tag{$\clubsuit$}\label{f-shiki1}
E^{p, q}_1=H^q(Y, 
\widetilde {\Omega}^p_Y(\log T)(-T))
\Rightarrow H^{p+q}(Y, j_!\mathbb C_{Y-T}), 
\end{align*} 
where $j:Y-T\to Y$ is the natural open immersion, 
implies that the homomorphism 
$$
H^q(Y, j_!\mathbb C_{Y-T})\to H^q(Y, \mathcal O_Y(-T))
$$
induced by the natural inclusion 
$$
j_!\mathbb C_{Y-T}\subset \mathcal O_Y(-T)
$$ 
is surjective for every $q$. 
For the definition of $\widetilde {\Omega}^p_Y(\log T)(-T)$, 
see Definition \ref{f-def4.5}. 
We will discuss the $E_1$-degeneration of (\ref{f-shiki1}) in Section \ref{f-sec4} 
below. 
By taking a suitable direct summand $$\mathcal C\subset \mathcal M^{-1}(-S)$$ of 
$$\pi_*(j_!\mathbb C_{Y-T})\subset \pi_*\mathcal O_Y(-T), $$ we 
obtain a surjection 
$$
H^q(X, \mathcal C)\to H^q(X, \mathcal M^{-1}(-S)) 
$$ 
induced by the natural inclusion $\mathcal C\subset \mathcal M^{-1}(-S)$ for 
every $q$. We can check 
the following simple property by examining the monodromy 
action of the Galois group $\mathbb Z/N\mathbb Z$ of 
$\pi:Y\to X$ 
on $\mathcal C$ around $\Supp B$ (see also the proof of 
\cite[2.12.1 Proposition]{kollar-sing}). 

\begin{lem}[{cf.~\cite[Corollary 2.54]{kollar-mori}}]\label{f-lem3.1}
Let $U\subset X$ be 
a connected open subset such that 
$U\cap \Supp \Delta\ne \emptyset$. 
Then $H^0(U, \mathcal C|_{U})=0$. 
\end{lem}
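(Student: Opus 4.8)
The plan is to exploit the flat structure of $\mathcal{C}$ together with the monodromy of the cyclic cover $\pi$. First I would record that, since $\mathcal C$ is the degree-one eigen-summand of $\pi_*(j_!\mathbb C_{Y-T})$ under the Galois action of $\mathbb Z/N\mathbb Z$, its restriction to $X^\circ:=X\setminus\Supp\Delta$ is a rank-one local system $\mathbb L$ contained in the line bundle $(\mathcal M^{-1}(-S))|_{X^\circ}$ (recall $\lfloor B\rfloor=0$, so the $i=1$ piece of $\pi_*\mathcal O_Y(-T)$ is exactly $\mathcal M^{-1}(-S)$). A section $s\in H^0(U,\mathcal C|_U)$ therefore restricts to a flat section of $\mathbb L$ on $U^\circ:=U\setminus\Supp\Delta$, and since $\Supp\Delta$ is a complex hypersurface, $U^\circ$ is a nonempty connected dense open subset of $U$. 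By torsion-freeness of $\mathcal M^{-1}(-S)$ it then suffices to show that $s$ vanishes on \emph{some} nonempty open subset of $U^\circ$: the identity principle for local systems on the connected set $U^\circ$ propagates the vanishing to all of $U^\circ$, and hence to all of $U$.

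Next I would choose a point $P\in U\cap\Supp\Delta$ and argue locally. Because $\Delta$ is a boundary, its support splits as the disjoint union $\Supp S\sqcup\Supp B$, so $P$ lies on exactly one of these. If $P$ lies on a component $B_k$ of $B=\{\Delta\}$, then the branch divisor of $\pi$ meets $P$, the cover is given locally by $y^N=x^{Nb_k}(\text{unit})$ with $x$ a local equation of $B_k$ and $b_k\in(0,1)$ its coefficient, and the monodromy of the degree-one eigen-local-system $\mathbb L$ around $B_k$ is multiplication by $\exp(2\pi i\,b_k)$, which is $\ne 1$ precisely because $0<b_k<1$. A flat section of $\mathbb L$ near $P$ that is invariant under this nontrivial monodromy must vanish, so $s$ vanishes on a nonempty open subset of $U^\circ$ near $P$.

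If instead $P$ lies on $\Supp S$, then $\pi$ is étale near $P$ (its branch divisor is $\Supp B$, which is disjoint from $\Supp S$), and the relevant mechanism is extension by zero rather than monodromy: $\mathcal C$ is a summand of $\pi_*(j_!\mathbb C_{Y-T})$, and $j_!\mathbb C_{Y-T}$ is extended by zero across $T=\pi^*S$, so every section of $\mathcal C$ vanishes identically in a neighborhood of $S$. Thus in both cases $s$ vanishes on a nonempty open subset of $U^\circ$, and the propagation argument of the first paragraph gives $H^0(U,\mathcal C|_U)=0$.

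The main obstacle will be pinning down the precise local identification in the branched case: one must correctly match $\mathcal C|_{X^\circ}$ with the degree-one eigensheaf of the \emph{normalized} cyclic cover and confirm that its monodromy character around $B_k$ is $\exp(2\pi i\,b_k)$ under the normalization $Nb_k\in\mathbb Z$, $0<b_k<1$. This is exactly where the hypothesis that $B$ is the fractional part $\{\Delta\}$ (all coefficients strictly between $0$ and $1$) is essential, and it is the analytic counterpart of the monodromy computation in \cite[2.12.1 Proposition]{kollar-sing}; the remaining ingredients (connectedness and density of $U^\circ$, torsion-freeness of $\mathcal M^{-1}(-S)$, and the identity principle) are routine.
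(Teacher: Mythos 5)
Your proof is correct and takes essentially the same route as the paper's (two-sentence) proof: nontrivial monodromy of the rank-one eigen-local-system around $\Supp B$, plus extension by zero across $T=\pi^*S$ near $\Supp S$, with your identity-principle propagation on $U\setminus \Supp\Delta$ merely making explicit what the paper leaves implicit. The one slip is the claim that $\Supp\Delta$ splits as the disjoint union $\Supp S\sqcup \Supp B$: the two have no common components, but since $\Supp\Delta$ is only simple normal crossing they may well intersect, so a point can lie on both and $\pi$ need not be \'etale near every point of $\Supp S$; this is harmless, because at such a point the monodromy argument already applies, and in the second case you may either choose $P\in \Supp S\setminus \Supp B$ or observe that the stalks of $j_!\mathbb C_{Y-T}$ vanish along $T$, so sections of $\mathcal C$ vanish in a neighborhood of every point of $\Supp S$ with no \'etaleness needed.
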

\begin{proof}
If $U\cap \Supp B\ne \emptyset$, then $H^0(U, \mathcal C|_U)=0$ since the monodromy 
action on $\mathcal C|_{U\setminus 
\Supp B}$ around $\Supp B$ is nontrivial. 
If $U\cap \Supp S\ne \emptyset$, then $H^0(U, \mathcal C|_U)=0$ since 
$\mathcal C$ is a direct summand of $\pi_*(j_!\mathbb C _{Y-T})$ and $T=\pi^*S$. 
\end{proof}

Lemma \ref{f-lem3.1} is useful by the following fact. 
The proof of Lemma \ref{f-lem3.2} is obvious. 

\begin{lem}[{cf.~\cite[Lemma 2.55]{kollar-mori}}]\label{f-lem3.2}
Let $F$ be a sheaf of Abelian groups on a topological 
space 
$X$ and let $F_1$ and $F_2$ be subsheaves of $F$. 
Let $Z\subset X$ be a closed subset. Assume 
that 
\begin{itemize}
\item[(1)] $F_2|_{X- Z}=F|_{X- Z}$, and 
\item[(2)] if $U\subset X$ is a connected open 
subset with $U\cap Z\ne \emptyset$, then 
$H^0(U, F_1|U)=0$. 
\end{itemize}
Then $F_1$ is a subsheaf of $F_2$. 
\end{lem}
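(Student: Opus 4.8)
The plan is to verify the containment $F_1 \subset F_2$ at the level of sections over arbitrary open sets, since a morphism of sheaves (here, the inclusion) factors through a subsheaf precisely when it does so on sections over every open set, compatibly with restriction. Concretely, I would fix an arbitrary open subset $U \subset X$ and show that every section $s \in F_1(U) \subset F(U)$ actually lies in the subgroup $F_2(U) \subset F(U)$. Because $F_2$ and $F_1$ are already given as subsheaves of the common sheaf $F$, there is nothing to check about compatibility with restriction maps once the set-theoretic inclusion of sections is established; the restriction maps are all inherited from $F$.

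The key idea is to exploit hypothesis (1), which says $F_1$ and $F_2$ agree with $F$ away from the closed set $Z$, and to use hypothesis (2) to kill any discrepancy supported on $Z$. First I would consider the open cover of $U$ given by $U \setminus Z$ together with sufficiently small connected open neighborhoods of points of $U \cap Z$. On $U \setminus Z$ the section $s$ lands in $F_2$ for free, since $F_2|_{X \setminus Z} = F|_{X \setminus Z}$ by (1). The remaining task is to check the condition near $Z$. For this I would take any connected open $V \subset U$ meeting $Z$ and examine the image of $s|_V$ in the quotient (presheaf) $F/F_2$, or more directly compare $s$ with elements of $F_2$; the point is that on $V \setminus Z$ the section agrees with a section of $F_2$, so the obstruction to $s|_V$ lying in $F_2(V)$ is a section of $F_1$ (a difference of two candidate local sections) that vanishes off $Z$, hence restricts to zero on the dense open $V \setminus Z$, and by (2) such a section of $F_1|_V$ must be zero.

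A cleaner route, which I expect to be the cleanest way to organize the argument, is to pass to the quotient sheaf $G = F/F_2$ and consider the composite $F_1 \hookrightarrow F \to G$; proving $F_1 \subset F_2$ is equivalent to showing this composite is the zero morphism. By (1) the sheaf $G$ is supported on $Z$, so any section of $G$ over a connected open $V$ is determined by its behavior near $Z \cap V$ and vanishes on $V \setminus Z$. The image of a local section of $F_1$ in $G$ is then a section supported on $Z$ whose preimage in $F_1$, over a connected neighborhood meeting $Z$, is forced by (2) to vanish; this gives that the composite morphism is zero on a basis of connected opens, hence zero.

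The main obstacle, and the only point requiring care, is the interaction between the \emph{local} vanishing statement in (2), which concerns global sections $H^0(V, F_1|_V)$ over connected opens meeting $Z$, and the \emph{sheaf-theoretic} conclusion $F_1 \subset F_2$. One must ensure that the element of $F_1$ whose vanishing is invoked genuinely is a section over a \emph{connected} open set meeting $Z$, so that (2) applies; this is why restricting to connected open subsets, and using that separating the two candidate lifts produces an honest section of $F_1$ rather than merely of $F$, is essential. Since the paper declares the proof obvious, I would keep this verification brief, emphasizing only the reduction to connected opens and the application of (2) to the difference section that vanishes on the complement of $Z$.
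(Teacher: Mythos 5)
Your proposal is, in its preferred formulation, the standard argument; the paper itself offers no proof (it declares the lemma obvious, with a pointer to Koll\'ar--Mori), so the only benchmark is the intended ``obvious'' verification, and your quotient-sheaf route does carry it out correctly: by (1) the quotient $G=F/F_2$ vanishes off $Z$, and by (2) any section of $F_1$ over a connected open set meeting $Z$ is already zero, so the composite $F_1\hookrightarrow F\to G$ vanishes and hence $F_1\subset F_2$.

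However, one step in your first formulation is wrong, and you also misdiagnose where the care is needed. You claim that the obstruction to $s|_V\in F_2(V)$ is ``a difference of two candidate local sections'' which is ``an honest section of $F_1$ rather than merely of $F$,'' and you single this out as the essential point. But the difference of a section of $F_1$ and a section of $F_2$ lies only in $F$ in general --- neither subsheaf contains the other a priori --- so that difference cannot be fed into hypothesis (2). Fortunately no difference is needed: (2) says the entire group $H^0(V,F_1|_V)$ vanishes for connected open $V$ with $V\cap Z\ne\emptyset$, so $s|_V$ itself is zero, and in particular $s|_V=0\in F_2(V)$. The whole proof is then: given $s\in F_1(U)$, cover $U$ by $U\setminus Z$, where $s\in F_2$ by (1), and by connected opens $V$ meeting $Z$, where $s|_V=0$; the sheaf axiom for the subsheaf $F_2\subset F$ glues these to give $s\in F_2(U)$. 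Likewise your appeal to $s$ ``restricting to zero on the dense open $V\setminus Z$'' does no work and would be unjustified in general, since a section of a sheaf of abelian groups is not determined by its restriction to a dense open set. The one genuine point of care, which you do not name, is the supply of connected open sets: points of $Z$ must admit connected open neighborhoods inside $U$ for (2) to bite, i.e., the argument implicitly uses local connectedness of $X$ (for a general topological space the statement can fail vacuously); this is harmless here because $X$ is a complex variety in all applications.
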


Therefore, we obtain: 

\begin{cor}[{cf.~\cite[Corollary 2.56]{kollar-mori}}]\label{f-cor3.3}
Let $M\subset \mathcal M^{-1}(-S)$ be a subsheaf such that 
$M|_{X- \Supp \Delta}
=\mathcal M^{-1}(-S)|_{X- \Supp \Delta}$. 
Then the injection 
$$
\mathcal C\to \mathcal M^{-1}(-S) 
$$ 
factors as 
$$
\mathcal C \to M\to \mathcal M^{-1}(-S). 
$$ Therefore, 
$$H^q(X, M)\to H^q(X, \mathcal M^{-1}(-S))
$$ 
is surjective for every $q$. 
\end{cor}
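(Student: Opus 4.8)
The plan is to derive the factorization directly from Lemma \ref{f-lem3.2} and then read off the cohomological surjectivity formally, using the surjectivity already established before Lemma \ref{f-lem3.1}. Concretely, I would apply Lemma \ref{f-lem3.2} with the choices $F=\mathcal M^{-1}(-S)$, $F_1=\mathcal C$, $F_2=M$, and closed subset $Z=\Supp\Delta$. Condition (1) of Lemma \ref{f-lem3.2}, namely $F_2|_{X-Z}=F|_{X-Z}$, is exactly the hypothesis $M|_{X-\Supp\Delta}=\mathcal M^{-1}(-S)|_{X-\Supp\Delta}$ imposed on $M$. Condition (2), that $H^0(U,F_1|_U)=0$ for every connected open $U$ meeting $Z$, is precisely Lemma \ref{f-lem3.1}. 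Hence Lemma \ref{f-lem3.2} tells us that $\mathcal C$ is a subsheaf of $M$; equivalently, the inclusion $\mathcal C\hookrightarrow \mathcal M^{-1}(-S)$ factors through $M$, yielding $\mathcal C\to M\to \mathcal M^{-1}(-S)$ as claimed.

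Next I would pass to cohomology. The factorization induces, for each $q$, a commutative triangle
\[
H^q(X,\mathcal C)\longrightarrow H^q(X,M)\longrightarrow H^q(X,\mathcal M^{-1}(-S)),
\]
whose composite is the map induced by $\mathcal C\hookrightarrow \mathcal M^{-1}(-S)$. That composite was already shown to be surjective for every $q$ in the discussion preceding Lemma \ref{f-lem3.1} (via the $E_1$-degeneration of (\ref{f-shiki1}), descent along the cyclic cover $\pi:Y\to X$, and passage to the direct summand $\mathcal C$). Since a surjective map that factors through $H^q(X,M)$ forces the second arrow to be surjective, we conclude that $H^q(X,M)\to H^q(X,\mathcal M^{-1}(-S))$ is surjective for every $q$.

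There is essentially no hard analytic step here: all the content is front-loaded into Lemma \ref{f-lem3.1} and the $E_1$-degeneration. The only point demanding care is the bookkeeping in Lemma \ref{f-lem3.2}, i.e.\ confirming that $Z=\Supp\Delta$ is the correct locus so that condition (1) becomes the stated compatibility of $M$ off $\Supp\Delta$ and condition (2) becomes exactly Lemma \ref{f-lem3.1}. Once the roles of $F$, $F_1$, $F_2$, and $Z$ are pinned down, both the factorization and the cohomological surjectivity follow immediately.
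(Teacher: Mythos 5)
Your proposal is correct and follows the paper's own argument exactly: the paper likewise obtains the factorization by applying Lemma \ref{f-lem3.2} (with $Z=\Supp\Delta$, condition (1) given by the hypothesis on $M$, and condition (2) given by Lemma \ref{f-lem3.1}) and then concludes surjectivity of $H^q(X,M)\to H^q(X,\mathcal M^{-1}(-S))$ because the composite $H^q(X,\mathcal C)\to H^q(X,M)\to H^q(X,\mathcal M^{-1}(-S))$ was already shown surjective. Your write-up merely makes the instantiation of $F$, $F_1$, $F_2$, and $Z$ explicit, which the paper leaves implicit.
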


\begin{proof}
The first part is clear from Lemma \ref{f-lem3.1} 
and Lemma \ref{f-lem3.2}. 
This implies that we have maps 
$$
H^q(X, \mathcal C)\to H^q(X, M)\to H^q(X, \mathcal M^{-1}(-S)). 
$$ 
As we saw above, the composition is surjective. 
Hence so is the map on the right. 
\end{proof}

Therefore, 
$H^q(X, \mathcal M^{-1}(-S-D))\to H^q(X, 
\mathcal M^{-1}(-S))$ is 
surjective for every $q$. By Serre duality, we obtain that 
$$H^q(X, \mathcal O_X(K_X)\otimes \mathcal M(S))\to H^q(X, \mathcal 
O_X(K_X)\otimes \mathcal M(S+D))$$ is 
injective for every $q$. 
This means that $$H^q(X, \mathcal L)\to 
H^q(X, \mathcal L\otimes \mathcal O_X(D))$$ is injective for every $q$.
\end{proof}

\section{MHS for cohomology with compact support}\label{f-sec4}
In this section, we prove the $E_1$-degeneration of (\ref{f-shiki1}) in the proof of 
Theorem \ref{f-thm1.2} for the reader's convenience. 
It is more or less well-known to the experts.  

From \ref{f-say4.1} to \ref{f-say4.3}, we recall some well-known 
results on mixed Hodge structures. 
We use the notations in \cite{deligne} freely. 
The basic references on this topic are 
\cite[Section 8]{deligne}, \cite[Part II]{elzein}, 
\cite[Chapitres 2 and 3]{elzein2}, and the book \cite{ps}. 

First, we start with the pure Hodge structures on 
complex manifolds in Fujiki's class $\mathcal C$.

\begin{say}\label{f-say4.1} 
Let $X$ be a complex manifold in Fujiki's class $\mathcal C$. 
Then the triple $(\mathbb Z_X, 
(\Omega^{\bullet}_X, F), \alpha)$, 
where $\Omega^{\bullet}_X$ is the holomorphic de Rham complex 
with the filtration b\^ete $F$ (see 
\cite[(1.4.7)]{deligne}) and 
$\alpha:\mathbb C_X\to \Omega^{\bullet}_X$ is the inclusion, 
is a cohomological Hodge complex (CHC, for short) of weight 
zero. 

If we define weight filtrations as follows: 
\begin{align*}
W_m\mathbb Q_X=\begin{cases}0 &\text{if}\quad m<0\\ 
\mathbb Q_X &\text{if} \quad m\geq 0
\end{cases} 
\end{align*}
and 
\begin{align*}
W_m\Omega^\bullet_X=\begin{cases}0 &\text{if}\quad m<0\\ 
\Omega^\bullet_X &\text{if} \quad m\geq 0 ,
\end{cases}
\end{align*}
then we can see that 
$$(\mathbb Z_X, (\mathbb Q_X, W), (\Omega^\bullet_X, F, W))$$ is 
a cohomological mixed Hodge complex (CMHC, for short). 
We need these weight filtrations in the following arguments. 
\end{say}
The next one is also a fundamental 
example. For the details, see \cite[I.1]{elzein} or 
\cite[3.5]{elzein2}. 

\begin{say}\label{f-say4.2}
Let $D$ be a simple normal crossing  
variety in Fujiki's class $\mathcal C$. 
Let $\varepsilon:D^{\bullet}\to D$ be the Mayer--Vietoris simplicial 
resolution (see Definition \ref{f-def2.14}). 
We use similar notations to those in Definition \ref{f-def2.14}.  
The following complex of sheaves, denoted by 
$\mathbb Q_{D^{\bullet}}$, 
$$
{\varepsilon_0}_*\mathbb Q_{D^0}
\to {\varepsilon_1}_*\mathbb Q_{D^1}\to \cdots 
\to {\varepsilon_k}_*\mathbb Q_{D^k}\to \cdots, 
$$ is 
a resolution of $\mathbb Q_D$. 
More explicitly, the differential 
$d_k: \varepsilon _{k*} \mathbb Q_{D^k} 
\to \varepsilon _{k+1*}\mathbb Q_{D^{k+1}}$ is $\sum _{j=0}^{k+1} 
(-1)^j\lambda^*_{j, k+1}$ for every $k\geq 0$. 
The weight filtration $W$ on $\mathbb Q_{D^{\bullet}}$ is defined by 
\begin{align*}
W_{-q}(\mathbb Q_{D^{\bullet}})
=(0\to \cdots \to 0\to\varepsilon _{q*}\mathbb Q_{D^q}\to \varepsilon _{q+1*}\mathbb Q_{D^{q+1}}\to \cdots). 
\end{align*}
We obtain the resolution $\Omega^{\bullet}_{D^{\bullet}}$ of 
$\mathbb C_D$ as follows: 
$$
{\varepsilon_0}_*\Omega^{\bullet}_{D^0}
\to {\varepsilon_1}_*\Omega^{\bullet}_{D^1}\to \cdots 
\to {\varepsilon_k}_*\Omega^{\bullet}_{D^k}\to \cdots. 
$$ 
Of course, 
$d_k: \varepsilon _{k*} \Omega^{\bullet}_{D^k} 
\to \varepsilon _{k+1*}\Omega^{\bullet}_{D^{k+1}}$ is $\sum _{j=0}^{k+1} 
(-1)^j\lambda^*_{j, k+1}$. 
Let $s(\Omega^{\bullet}_{D^{\bullet}})$ be the single complex 
associated to the double complex $\Omega^{\bullet}_{D^{\bullet}}$. 
The Hodge filtration $F$ on 
$s(\Omega^{\bullet}_{D^{\bullet}})$ is defined 
by  
$$F^p=s(0\to \cdots \to 0\to 
\varepsilon _*\Omega^p_{D^{\bullet}}\to 
\varepsilon_*\Omega^{p+1}_{D^{\bullet}}\to \cdots).$$ 
We note that $$\varepsilon _*\Omega^p_{D^{\bullet}}
=(\varepsilon _{0*}\Omega^p_{D^0}
\to \varepsilon _{1*}\Omega^p_{D^1}\to \cdots 
\to \varepsilon _{k*}\Omega^p_{D^k}\to \cdots) $$
for every $p$.  
The weight filtration $W$ on $s(\Omega^{\bullet}_{D^{\bullet}})$ is defined 
by 
\begin{align*}
W_{-q}(s(\Omega^{\bullet}_{D^{\bullet}}))
=s(0\to \cdots \to 0 \to \varepsilon _{q*}\Omega^{\bullet}_{D^q}
\to \varepsilon _{q+1*}\Omega^{\bullet}_{D^{q+1}}\to \cdots). 
\end{align*}
We note that 
$$
\Gr _{-q}^W\mathbb Q_{D^\bullet}\simeq \varepsilon _{q*}\mathbb Q_{D^q}[-q] 
$$ 
and 
$$
\Gr_{-q}^W(s(\Omega^{\bullet}_{D^\bullet}))\simeq \varepsilon _{q*}
\Omega^{\bullet}_{D^q}[-q]. 
$$ 
Then $$(\mathbb Z_{D}, (\mathbb Q_{D^{\bullet}}, W), 
(s(\Omega^{\bullet}_{D^{\bullet}}), W, F))$$ is a CMHC. Here, we 
omitted the quasi-isomorphisms $\alpha: \mathbb Z_D\otimes 
\mathbb Q\to \mathbb Q_{D^\bullet}$ and 
$\beta: (\mathbb Q_{D^\bullet}, W)\otimes \mathbb C\to (s(\Omega^\bullet_{D^\bullet}), W)$ since 
there is no risk of confusion. 
This CMHC induces a natural mixed Hodge structure on $H^{\bullet}
(D, \mathbb Z)$. 
We note that the spectral sequence with respect to $W$ on $\mathbb Q_{D^\bullet}$ 
is 
\begin{align*}
{}_{W}\!E^{p, q}_1=H^{p+q}(D, \Gr_{-p}^W \mathbb Q_{D^\bullet})&=
H^{p+q}(D, \varepsilon _{p*}\mathbb Q_{D^p}[-p])\\ 
&=H^q(D^p, \mathbb Q)\\ 
&\Rightarrow H^{p+q}(D, \mathbb Q)
\end{align*}
such that 
the differential $d^{p, q}_1:{}_W\!E^{p, q}_1\to {}_W\!E^{p+1, q}_1$ is given by 
$$
d^{p, q}_1=\sum _{j=0}^{p+1}(-1)^j\lambda_{j, p+1}^*: H^q(D^p, \mathbb Q)
\to H^q(D^{p+1}, \mathbb Q)
$$
and it degenerates at $E_2$. 
The spectral sequence with respect to 
$F$ is 
\begin{align*}
{}_F\!E^{p, q}_1=\mathbb H^{p+q}(D, \Gr^p_F(s(\Omega^{\bullet}_{D^\bullet})))
\Rightarrow H^{p+q}(D, \mathbb C) 
\end{align*} 
and it degenerates at $E_1$. 
\end{say}

For the precise definitions of CHC and CMHC (CHMC, in French), 
see \cite[Section 8]{deligne} or 
\cite[Chapitre 3]{elzein2}. 
See also \cite[2.3.3 and 3.3]{ps}. 

The third example is not so standard but is indispensable for our injectivity 
theorems. 

\begin{say}\label{f-say4.3} 
Let $X$ be a complex manifold in Fujiki's class $\mathcal C$ 
and let $D$ be a simple normal crossing divisor on $X$. 
We consider the mixed cones of $\phi: \mathbb Q_X\to 
\mathbb Q_{D^{\bullet}}$ and $\psi:\Omega^\bullet _X\to 
\Omega^\bullet_{D^\bullet}$ with suitable shifts of complexes 
and weight filtrations (for the details, 
see, for example, \cite[I.3]{elzein}, 
\cite[3.7.14]{elzein2}, \cite[Section 3.3.4]{elzein-le} or 
\cite[Theorem 3.22]{ps}), 
where $\phi$ and $\psi$ are induced by the 
natural restriction map. 
More precisely, 
we define a complex $$\mathbb Q_{X-D^\bullet}=\Cone ^\bullet(\phi)[-1].$$ 
Then we have 
$$
(\mathbb Q_{X-D^\bullet})^p=(\mathbb Q_X)^p\oplus (\mathbb Q_{D^\bullet})^{p-1}. 
$$
The weight filtration on $\mathbb Q_{X-D^\bullet}$ is defined as follows:  
$$
(W_m\mathbb Q_{X-D^\bullet})^p
=(W_{m}\mathbb Q_X)^p\oplus (W_{m+1}(\mathbb Q_{D^\bullet}))^{p-1}. $$ 
We note that $\mathbb Q_{X-D^\bullet}$ is quasi-isomorphic to 
$j_{!}\mathbb Q_{X-D}$, 
where $j:X-D\to X$ is the natural open immersion.  
We put 
$$\Omega^\bullet_{X-D^\bullet}=\Cone ^\bullet(\psi)[-1]. 
$$ 
We note that 
$$
\Omega^p_{X-D^\bullet}=\Omega^p_X\oplus (s\Omega^\bullet_{D^\bullet})^{p-1}. 
$$
We define filtrations on $\Omega^\bullet _{X-D^\bullet}$ 
as follows: 
$$
(W_m\Omega^\bullet_{X-D^\bullet})^p=
(W_{m}\Omega^{\bullet}_X)^p\oplus (W_{m+1}(s\Omega^\bullet_{D^\bullet}))^{p-1}
$$ 
and 
$$(F^r\Omega^\bullet_{X-D^\bullet})^p=(F^r\Omega^{\bullet}_X)^p
\oplus (F^r(s\Omega^\bullet_{D^\bullet}))^{p-1}. 
$$ 
Then we obtain that the triple 
$$(j_!\mathbb Z_{X-D}, (\mathbb Q_{X-D^{\bullet}}, 
W), (\Omega^{\bullet}_{X-D^{\bullet}}, W, F))$$ is 
a CMHC. 
It defines a natural mixed Hodge structure on $H^{\bullet}_c(X-D, 
\mathbb Z)$. 
We note that 
$$
\Gr^W_0\mathbb Q_{X-D^\bullet}=\mathbb Q_X
$$ 
and 
$$\Gr^W_{-p}\mathbb Q_{X-D^\bullet}=\Gr^W_{1-p}\mathbb Q_{D^\bullet}[-1]
=\varepsilon _{p-1*}\mathbb Q _{D^{p-1}}[-p]
$$ for 
$p\geq 1$. 
The spectral sequence with respect to 
$W$ 
$$
{}_W\!E^{p, q}_1=H^{p+q}(X, \Gr ^W_{-p}\mathbb Q_{X-D^\bullet})
\Rightarrow H^{p+q}_c(X-D, \mathbb Q) 
$$ 
degenerates at $E_2$, 
where 
$$
{}_W\!E^{0, q}_1=H^q(X, \mathbb Q)
$$ and 
$$
{}_W\!E^{p, q}_1=H^q(D^{p-1}, \mathbb Q) 
$$ 
for every $p\geq 1$. We put 
$$
\Omega^\bullet _X(\log D)(-D)=\Omega^\bullet _X(\log D)\otimes \mathcal O_X(-D). 
$$
Since we can check that the complex 
\begin{eqnarray*}
0\to \Omega^{\bullet}_X(\log D)(-D)\to \Omega^{\bullet}_X
\to {\varepsilon_0}_*\Omega^{\bullet}_{D^0}\\
\to {\varepsilon_1}_*\Omega^{\bullet}_{D^1}\to 
\cdots 
\to {\varepsilon_k}_*\Omega^{\bullet}_{D^k}\to \cdots
\end{eqnarray*} 
is exact by direct local calculations, we see that 
$(\Omega^{\bullet}_{X-D^{\bullet}}, F)$ is quasi-isomorphic to 
$(\Omega^{\bullet}_X(\log D)(-D), F)$ in $D^+F(X, \mathbb C)$, 
where 
\begin{eqnarray*}
\lefteqn{F^p\Omega^{\bullet}_X(\log D)(-D)}\\ & & 
=(0\to \cdots \to 0 \to \Omega^p_X(\log D)(-D)\to 
\Omega^{p+1}_X(\log D)(-D)\to \cdots). 
\end{eqnarray*}
Therefore, the spectral sequence with respect to $F$ 
$$
E^{p,q}_1=H^q(X, \Omega^p_X(\log D)(-D))
\Rightarrow 
\mathbb H^{p+q}(X, \Omega^{\bullet}_X(\log D)(-D))
$$ 
degenerates at $E_1$. Note that the right hand side 
is isomorphic to $H^{p+q}_c(X-D, \mathbb C)$. 
We also note that 
$$
\Gr_F^0\Omega^\bullet_X(\log D)(-D)\simeq \mathcal O_X(-D). 
$$
\end{say}

\begin{rem}When we take {\em{mixed cones}} in \ref{f-say4.3}, 
we have to be careful about the commutativity 
of various comparison morphisms in the derived category 
(see \cite[Section 3.3.4]{elzein-le} and \cite[Remark 3.23]{ps}). 
\end{rem}

Let us recall the notion of {\em{$V$-manifolds}}. 
We need it for the proof of Theorem \ref{f-thm1.2}. 

\begin{defn}[$V$-manifold]\index{$V$-manifold}\label{f-def4.5}
A {\em{$V$-manifold}} of dimension $d$ is a complex analytic 
space that admits an open covering $\{U_i\}$ such 
that each $U_i$ is analytically isomorphic to $V_i/G_i$, 
where $V_i\subset \mathbb C^d$ is an 
open ball and $G_i$ is a finite subgroup of $\GL(d, \mathbb C)$. 
In this paper, $G_i$ is always an abelian group for every $i$. 

Let $X$ be a $V$-manifold and let $\Sigma$ be its 
singular locus. Then we define 
$$\widetilde {\Omega}^{\bullet}_X=
j_*\Omega^{\bullet}_{X-\Sigma}, $$ 
where 
$j:X-\Sigma \to X$ is the natural open immersion. 
A divisor $D$ on $X$ is called a {\em{divisor with 
$V$-normal crossings}} if locally on $X$ we have $(X, D)\simeq 
(V, E)/G$ with $V\subset \mathbb C^d$ an open domain, 
$G\subset \GL(d, \mathbb C)$ a small 
subgroup acting on $V$, and $E\subset V$ a $G$-invariant 
normal crossing divisor. 
We define $$\widetilde {\Omega}^{\bullet}_X(\log D)
=j_*\Omega^{\bullet}_{X-\Sigma}(\log D).$$ 
Furthermore, if $D$ is Cartier, then 
we put $$\widetilde {\Omega}^{\bullet}_X(\log D)(-D)
=\widetilde \Omega^{\bullet}_X(\log D)\otimes \mathcal O_X(-D). $$ 
\end{defn}

Let us go back to the proof of the $E_1$-degeneration of (\ref{f-shiki1}) in the proof of Theorem \ref{f-thm1.2}. 

\begin{proof}[Proof of the 
$E_1$-degeneration of $($\ref{f-shiki1}$)$ in the proof of Theorem \ref{f-thm1.2}]
\label{f-proof}
Here, we use the notation in the proof of 
Theorem \ref{f-thm1.2}. 
In this case, we know that $Y$ has only quotient singularities, that is, 
$Y$ is a $V$-manifold. 
We see that $Y$ is in Fujiki's class $\mathcal C$ (see Remark \ref{f-rem2.12}). 
Then we obtain that 
$$(\mathbb Z_Y, (\widetilde \Omega^{\bullet}_Y, F), \alpha)$$ 
is a CHC, where $F$ is the filtration b\^ete and 
$\alpha:\mathbb C_Y\to \widetilde {\Omega}^{\bullet}_Y$ is 
the inclusion. For the 
details, see \cite[(1.6)]{steenbrink}. 
By construction, $T$ is a divisor with $V$-normal crossings 
on $Y$ (see Definition \ref{f-def4.5} and \cite[(1.16) Definition]{steenbrink}). 
We can check that $Y$ is singular only over the singular 
locus of $\Supp B$. 
Let $\varepsilon:T^{\bullet}\to T$ be the Mayer--Vietoris 
simplicial resolution. Although each irreducible component of 
$T$ may have singularities, 
Definition \ref{f-def2.14} makes sense without any modifications. 
We note that 
$T^n$ has only quotient singularities for every $n\geq 0$ 
by the construction of $\pi:Y\to X$. 
We can also check that the same construction as in \ref{f-say4.2} 
works with only minor modifications. Hence we have  
a CMHC $$(\mathbb Z_T, (\mathbb Q_{T^{\bullet}}, W), 
(s(\widetilde {\Omega}^{\bullet}_{T^{\bullet}}), W, F))$$ that 
induces a natural mixed Hodge structure on $H^{\bullet}(T, \mathbb Z)$. 
By the same arguments as in \ref{f-say4.3}, we can construct a triple 
$$(j_!\mathbb Z_{Y-T}, (\mathbb Q_{Y-T^{\bullet}}, W), (K_{\mathbb C}, 
W, F)), 
$$ 
where $j:Y-T\to Y$ is the natural 
open immersion. 
It is a CMHC that induces a natural mixed Hodge structure on 
$H^{\bullet}_c
(Y-T, \mathbb Z)$ and $(K_{\mathbb C}, F)$ is quasi-isomorphic 
to $(\widetilde \Omega^{\bullet}_Y(\log T)(-T), F)$ 
in $D^+F(Y, \mathbb C)$, where 
\begin{eqnarray*}
\lefteqn{F^p\widetilde\Omega^{\bullet}_Y(\log T)(-T)}\\& &
=(0\to \cdots \to 0 \to \widetilde \Omega^p_Y(\log T)(-T)\to 
\widetilde\Omega^{p+1}_Y(\log T)(-T)\to \cdots). 
\end{eqnarray*} 
Therefore, the spectral sequence with respect to $F$ 
$$
E^{p,q}_1=H^q(Y, \widetilde\Omega^p_Y(\log T)(-T))
\Rightarrow 
\mathbb H^{p+q}(Y, \widetilde\Omega^{\bullet}_Y(\log T)(-T))
$$ 
degenerates at $E_1$. Note that 
the right hand side 
is isomorphic to $H^{p+q}_c(Y-T, \mathbb C)=H^{p+q}(Y, j_!\mathbb C_{Y-T})$. 
We also note that 
$$
\Gr^0_F\widetilde \Omega^{\bullet}_Y(\log T)(-T)\simeq \mathcal O_Y(-T). 
$$
\end{proof}

\section{Injectivity, torsion-free, and 
vanishing theorems}\label{f-sec5}

Once we establish Theorem \ref{f-thm1.2}, we can easily prove 
Theorem \ref{f-thm1.3}. 
Moreover, Theorem \ref{f-thm1.4} is an easy consequence of Theorem \ref{f-thm1.3}. 
 
\begin{proof}[Proof of Theorem \ref{f-thm1.3}] 
We can obtain Theorem \ref{f-thm1.3} as an application of 
Theorem \ref{f-thm1.2}. More precisely, 
by Theorem \ref{f-thm1.2}, the proof of \cite[Theorem 6.1]{fujino2} 
works with some suitable modifications. 
Note that the vanishing theorem of Reid--Fukuda type for birational 
morphisms (see \cite[Lemma 6.2]{fujino2}) holds for bimeromorphic 
morphisms between complex varieties by Lemma \ref{f-lem2.19}. 
The desingularization used in the proof of \cite[Theorem 6.1]{fujino2} 
holds also for compact complex varieties (see Remark \ref{f-rem2.5}). 
We leave the details as exercises for the reader. 
\end{proof}

\begin{proof}[Proof of Theorem \ref{f-thm1.4}]
Theorem \ref{f-thm1.4} follows from Theorem \ref{f-thm1.3} by some standard arguments. 
For (i), Step 1 in the proof of \cite[Theorem 6.3 (i)]{fujino2} is sufficient 
by Theorem \ref{f-thm1.3} since $X$ is projective. 
Step 1 in the proof of \cite[Theorem 6.3 (ii)]{fujino2} works 
without any modifications by Theorem \ref{f-thm1.3}. 
Therefore, we obtain the statement (ii). 
For the details, see \cite{fujino2}. 
\end{proof}

\section{Semipositivity theorem}\label{f-sec6}

The following result is a special case of Theorem \ref{f-thm1.4} (i). 

\begin{cor}\label{f-cor6.1}
Let $X$ be a compact K\"ahler manifold and 
let $f:X\to Y$ be a surjective morphism onto a projective variety $Y$. 
Let $D$ be a simple normal crossing divisor on $X$ such that 
every stratum of $D$ is dominant onto $Y$. 
Then $R^if_*\mathcal O_X(K_X+D)$ is torsion-free for every $i$. 
\end{cor}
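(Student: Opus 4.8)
The plan is to deduce the statement directly from Theorem~\ref{f-thm1.4}\,(i), which supplies essentially all the content; the rest is bookkeeping. First I would record that the compact K\"ahler manifold $X$ lies in Fujiki's class $\mathcal C$, so it is admissible as the source in Theorem~\ref{f-thm1.4}\,(i). Note that in that theorem the source is denoted $Y$ and the target $X$, the opposite of the convention here, so I apply it with source $X$ and target $Y$. I set the boundary $\mathbb R$-divisor to be $\Delta=D$, which is reduced with simple normal crossing support, hence genuinely a boundary divisor, and I take the line bundle to be $\mathcal L=\mathcal O_X(K_X+D)$. Then $\mathcal L-(K_X+D)=\mathcal O_X$ is trivial, and in particular $f$-semi-ample, so the hypotheses of Theorem~\ref{f-thm1.4}\,(i) are satisfied.

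Applying Theorem~\ref{f-thm1.4}\,(i), every associated prime of $R^if_*\mathcal O_X(K_X+D)$ is the generic point of the $f$-image of some log canonical stratum of $(X,D)$. The next step is to identify these strata. Since $D$ is a reduced simple normal crossing divisor, the log canonical centers of $(X,D)$ are exactly the strata of $D$, i.e. the irreducible components of the various intersections of the components of $D$ (cf.~the last paragraph of Definition~\ref{f-def2.14}), while the remaining log canonical stratum is $X$ itself. By hypothesis every stratum of $D$ is dominant onto $Y$, and $X$ dominates $Y$ because $f$ is surjective. Hence the $f$-image of every log canonical stratum of $(X,D)$ is all of $Y$, whose generic point is the generic point $\eta_Y$ of the irreducible variety $Y$.

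It follows that $\eta_Y$ is the only associated prime of $R^if_*\mathcal O_X(K_X+D)$. Finally I would invoke the standard fact that a coherent sheaf on the irreducible variety $Y$ whose unique associated prime is $\eta_Y$ admits no nonzero local section supported on a proper closed subset, i.e.~is torsion-free; this yields the assertion for every $i$.

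I do not anticipate a genuine obstacle, since the corollary is a pure specialization of Theorem~\ref{f-thm1.4}\,(i). The only points requiring care are the source/target name reversal between the two statements, the verification that $\mathcal L-(K_X+D)$ is (trivially) $f$-semi-ample, and the identification of the log canonical strata of $(X,D)$ as $X$ together with the strata of $D$, so that the dominance hypothesis forces every relevant $f$-image to be $Y$ and hence every associated prime to be $\eta_Y$.
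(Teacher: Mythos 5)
Your argument is correct and is exactly the paper's route: the paper states Corollary \ref{f-cor6.1} as a special case of Theorem \ref{f-thm1.4}\,(i), obtained precisely by taking $\Delta=D$ and $\mathcal L=\mathcal O_X(K_X+D)$, so that the trivial bundle $\mathcal L-(K_X+D)$ is $f$-semi-ample and the log canonical strata ($X$ together with the strata of $D$, cf.\ Definition \ref{f-def2.14}) all dominate $Y$, forcing every associated prime to be the generic point of $Y$. Your verification of the hypotheses and the final torsion-freeness deduction are exactly the bookkeeping the paper leaves implicit.
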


Once we establish Corollary \ref{f-cor6.1}, 
we can prove Theorem \ref{f-thm1.5}. 

\begin{proof}[Proof of Theorem \ref{f-thm1.5}]
By Corollary \ref{f-cor6.1}, the arguments in \cite[Section 3]{fujino1} work 
without any modifications. Therefore, we obtain Theorem \ref{f-thm1.5}. 
\end{proof}

\begin{rem}
Theorem \ref{f-thm1.5} is contained in \cite{ffs}. 
See \cite[4.7.~Remark]{ffs}. 
Note that the argument in \cite{ffs} heavily depends on Saito's 
theory of mixed Hodge modules. 
\end{rem}

\begin{rem}
The semipositivity of $R^if_*\mathcal O_X(K_{X/Y}+D)$ 
in Theorem \ref{f-thm1.5} follows from \cite[Theorem 1.3]{fujino-fujisawa} 
or \cite[Theorem 3]{ffs}. 
We do not use \cite[Theorem 2]{kawamata2}, which depends on \cite{kawamata1}. 
For some comments on the semipositivity theorem in 
\cite{kawamata1}, see \cite[4.6.~Remark]{ffs}. 
\end{rem}

\begin{rem}\label{f-rem6.4}
In Theorem \ref{f-thm1.5}, the case when $i=0$ 
is similar to \cite[Theorem 32]{kawamata1}. 
Unfortunately, our results and arguments 
do not recover Kawamata's original statement (see \cite[Theorem 32]{kawamata1}). 
The author has been unable to 
follow \cite[Theorem 32]{kawamata1}. 
Anyway, our formulation of 
Theorem \ref{f-thm1.5} is natural 
and the statement of Theorem \ref{f-thm1.5} 
is sufficient for most applications. 
We do not use \cite[Theorem 32]{kawamata1} in this paper. 
Therefore, we do not touch \cite[Theorem 32]{kawamata1} here 
anymore. We note that 
\cite{kawamata1} was written before \cite{sz} and \cite{kashiwara}, 
where the theory of (admissible) 
variations of mixed Hodge structure was investigated. 
\end{rem}

\begin{rem}[On canonical extensions]
In the last paragraph in \cite[Section 2]{kawamata3}, 
Yujiro Kawamata wrote: 
\begin{quote}
The Hodge filtration $F$ of 
$\mathcal H$ extends to $\tilde {\mathcal H}$ such that $\mathrm{Gr}_F
(\tilde {\mathcal H})$ is still a locally free sheaf on $\tilde Y$. 
Indeed this is a consequence of the nilpotent 
orbit theorem [14] when $H$ is a variation of pure Hodge structures, and the 
general case follows immediately from this. 
\end{quote}
Note that [14] is Schmid's famous paper:~\cite{schmid}. 
Kawamata's statement is obviously wrong when $H$ is a 
variation of {\em{mixed}} Hodge structure. 
In \cite[5.~Canonical extension]{kawamata4}, he 
also wrote: 
\begin{quote}
By [17], the Hodge filtration of $\mathcal H$ extends to 
a filtration by locally free subsheaves, which we denote again by $F$. 
\end{quote} 
We note that [17] is Schmid's paper:~\cite{schmid}. 

As mentioned above, in \cite{kawamata3} and \cite{kawamata4}, Yujiro 
Kawamata seems to misuse Schmid's nilpotent orbit theorem. 
It is a result for polarizable variations of pure Hodge structure. 
We can not directly apply it to graded polarizable variations of mixed Hodge 
structure. For some explicit examples on this topic, 
see \cite[Example 1.5]{fujino-fujisawa}, \cite[(3.16) Example]{sz}, etc. 
Therefore, we do not use the results in \cite{kawamata3} and \cite{kawamata4} 
in this paper. 

Note that the main theorem of \cite{fujino-fujisawa} (see 
\cite[Theorem 1.1]{fujino-fujisawa}), whose proof is completely different from 
Kawamata's argument in \cite{kawamata3}, 
is sharper than Kawamata's main statement in \cite{kawamata3} (see 
\cite[Theorem 1.1]{kawamata3}). We also note that 
\cite[Theorem 1.1]{kawamata3} does not seem to cover the semipositivity 
theorem in \cite{fujino1} directly (see \cite[Theorem 3.9]{fujino1}). 
This is because \cite[Theorem 1.1]{kawamata3} requires some 
extra assumptions on every stratum. 
Roughly speaking, \cite[Theorem 3.9]{fujino1} 
is Theorem \ref{f-thm1.5} under the assumption that 
$X$ is a smooth projective variety. 
\end{rem}

Anyway, we have established Theorem \ref{f-thm1.5}, which is 
the main ingredient of the twisted weak positivity:~Theorem \ref{f-thm1.1}. 

\section{Weakly positive sheaves}\label{f-sec7} 

In this section, we discuss {\em{weakly positive sheaves}} introduced 
by Viehweg (see \cite{viehweg0} and \cite{viehweg1}). 
For the basic properties of weakly positive sheaves and related 
results, see, for example, \cite[Section 2]{viehweg-book} and 
\cite{fujino-revisited}. 
In this paper, 
we closely follow \cite{viehweg1}, \cite[Section 2]{viehweg-book}, 
\cite[Section 4.4]{campana}, and \cite{mori}. 
Here, we adopt \cite[Definition 3.1]{viehweg2} for the definition of 
weakly positive sheaves, which is slightly different from 
Viehweg's original definition (see \cite{viehweg0} and 
\cite[Definition 1.2]{viehweg1}). 

\begin{defn}
Let $W$ be a smooth projective 
variety and let $\mathcal F$ be a torsion-free 
coherent sheaf on $W$. 
Let $\widehat W$ be the largest Zariski open subset 
of $W$ such that 
$\mathcal F|_{\widehat W}$ is locally free. 
Then we put 
$$
\widehat S^k(\mathcal F)=i_*S^k(i^*\mathcal F) 
$$
where $i:\widehat W\to W$ is the natural open immersion and 
$S^k$ denotes the $k$-th symmetric product. 
Note that 
$\codim _W(W\setminus \widehat W)\geq 2$ since $\mathcal F$ is 
torsion-free. 
\end{defn}

\begin{defn}[Weak positivity]\label{f-def7.2}
Let $W$ be a smooth projective variety and let $\mathcal F$ be a torsion-free 
coherent sheaf on $W$. 
We call $\mathcal F$ {\em{weakly positive}}, 
if for every ample line bundle $\mathcal H$ on $W$ and every positive integer 
$\alpha$ there exists some positive integer $\beta$ such that 
$\widehat S^{\alpha\beta}(\mathcal F)\otimes \mathcal H^{\otimes \beta}$ is 
generically generated by global sections. 
This means that the natural map 
$$
H^0(W, \widehat S^{\alpha\beta}(\mathcal F)\otimes \mathcal H^{\otimes \beta})
\otimes \mathcal O_W\to \widehat S^{\alpha\beta}(\mathcal F)\otimes \mathcal H^{\otimes 
\beta}
$$ 
is generically surjective. 
By definition, the trivial sheaf $\mathcal F=0$ is weakly positive. 
\end{defn}

In some literature, the following definition is used for 
weakly positive sheaves on smooth projective varieties (see, 
for example, \cite{viehweg0}, and \cite[Definition 1.2]{viehweg1}). 

\begin{defn}[Original weak positivity]\label{f-def7.3}
Let $W$ be a smooth projective variety and let $\mathcal F$ be a torsion-free 
coherent sheaf on $W$. 
Let $U$ be a Zariski open set of $W$. 
We call $\mathcal F$ {\em{weakly positive over $U$}}, 
if for every ample line bundle $\mathcal H$ on $W$ and every positive integer 
$\alpha$ there exists some positive integer $\beta$ such that 
$\widehat S^{\alpha\beta}(\mathcal F)\otimes \mathcal H^{\otimes \beta}$ is 
generated by global sections over $U$. 
This means that the natural map 
$$
H^0(W, \widehat S^{\alpha\beta}(\mathcal F)\otimes \mathcal H^{\otimes \beta})
\otimes \mathcal O_W\to \widehat S^{\alpha\beta}(\mathcal F)\otimes \mathcal H^{\otimes 
\beta}
$$ 
is surjective over $U$. 
We call $\mathcal F$ {\em{weakly positive}}, if 
there exists some nonempty Zariski open set $U$ such that $\mathcal F$ is 
weakly positive over $U$. 
\end{defn}

Note that $U$ is independent of $\alpha$ and $\beta$ in Definition \ref{f-def7.3}. 
It is obvious that if $\mathcal F$ is weakly positive in the sense of 
Definition \ref{f-def7.3} then $\mathcal F$ is weakly positive in the 
sense of Definition \ref{f-def7.2}. 

In this paper, we do not use the weak positivity in the sense of Definition 
\ref{f-def7.3}, which is slightly stronger but is harder to prove 
than the weak positivity in the sense of Definition \ref{f-def7.2}. 

\begin{rem}\label{f-rem7.4} 
Let $W$ and $\mathcal F$ be as in Definition \ref{f-def7.2}. 
By Definition \ref{f-def7.2}, $\mathcal F$ is weakly positive if and 
only if so is $\widehat S^1(\mathcal F)=\mathcal F^{**}$, where 
$\mathcal F^{**}$ is the double-dual of $\mathcal F$. 
\end{rem} 

\begin{rem}[Nef locally free sheaf]\label{f-rem7.5}
Let $W$ be a smooth projective variety and let $\mathcal F$ be 
a locally free sheaf of finite rank on $W$. 
If 
$\mathcal F$ is nef, that is, 
$\mathcal F=0$ or 
$\mathcal O_{\mathbb P_W(\mathcal F)}(1)$ is a nef line 
bundle on $\mathbb P_W(\mathcal F)$,  then 
$\mathcal F$ is weakly positive. 

More precisely, let $\mathcal F\ne 0$ be a nef 
locally free sheaf of finite rank on a smooth projective variety $W$ and 
let $\mathcal H$ be an ample line bundle on $W$. 
We put $\pi:\mathbb P_W(\mathcal F)\to W$. 
Then we can easily check that 
$\mathcal O_{\mathbb P_W(\mathcal F)}(\alpha)\otimes \pi^*\mathcal H$ 
is an ample 
line bundle for every positive integer $\alpha$ by Nakai's criterion. 
In particular, $\mathcal O_{\mathbb P_W(\mathcal F)}(1)\otimes \pi^*\mathcal H$ 
is ample, equivalently, $\mathcal F\otimes \mathcal H$ is ample. 
By the argument b) in the proof of \cite[Proposition (3.2)]{hartshorne2}, 
there is a positive integer $\beta_0$ such that 
$S^{\alpha\beta}(\mathcal F)\otimes \mathcal H^{\otimes \beta}$ is 
generated by global sections for every integer $\beta\geq \beta_0$. 
Therefore, $\mathcal F$ is weakly positive. 
\end{rem}

\begin{rem}\label{f-rem7.6}
Let $\mathcal F$ be a line bundle on a smooth projective variety $W$. 
Then $\mathcal F$ is weakly positive 
if and only if $\mathcal F$ is pseudo-effective. 
\end{rem}

Although 
Lemma \ref{f-lem7.7} is obvious by the definition of weakly positive sheaves, 
it is very useful. We will repeatedly use Lemma \ref{f-lem7.7} in this 
section. 

\begin{lem}[{cf.~\cite[Lemma 1.4.~1)]{viehweg1}}]\label{f-lem7.7}
Let $W$ be a smooth projective variety and let $\mathcal F$ and $\mathcal G$ 
be torsion-free coherent sheaves on $W$. 
If $\mathcal F \to \mathcal G$ is a morphism 
which is generically surjective and if $\mathcal F$ is weakly positive, 
then $\mathcal G$ is also weakly positive. 
\end{lem}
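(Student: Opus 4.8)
The plan is to transport generic global generation along the given morphism. Write $\phi\colon\mathcal F\to\mathcal G$ for the generically surjective morphism, and let $\widehat W\subset W$ be the intersection of the two maximal open subsets over which $\mathcal F$ and $\mathcal G$ are respectively locally free. Since both sheaves are torsion-free, the complement $W\setminus\widehat W$ has codimension at least two, so with $i\colon\widehat W\to W$ the inclusion one has $\widehat S^k(\mathcal F)=i_*S^k(i^*\mathcal F)$ and $\widehat S^k(\mathcal G)=i_*S^k(i^*\mathcal G)$ for every $k$, using that $S^k$ of a locally free sheaf is reflexive and hence unaffected by removing a codimension-two set. Applying the symmetric power functor to $i^*\phi$ and pushing forward, I obtain for each $k$ a morphism
$$
\Phi_k\colon \widehat S^k(\mathcal F)\longrightarrow \widehat S^k(\mathcal G)
$$
on all of $W$. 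Choosing a dense open $U\subset\widehat W$ on which $\phi$ is surjective, the right-exactness of $S^k$ makes $\Phi_k$ surjective over $U$, hence generically surjective.

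Now fix an ample line bundle $\mathcal H$ on $W$ and a positive integer $\alpha$. Weak positivity of $\mathcal F$ provides a positive integer $\beta$ for which $\widehat S^{\alpha\beta}(\mathcal F)\otimes\mathcal H^{\otimes\beta}$ is generically generated by global sections. Tensoring $\Phi_{\alpha\beta}$ by $\mathcal H^{\otimes\beta}$ gives a generically surjective morphism $\Phi\colon A\to B$, where I abbreviate $A=\widehat S^{\alpha\beta}(\mathcal F)\otimes\mathcal H^{\otimes\beta}$ and $B=\widehat S^{\alpha\beta}(\mathcal G)\otimes\mathcal H^{\otimes\beta}$.

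The concluding step is a diagram chase. From the commutative square relating the two evaluation maps to $\Phi$ and to $H^0(\Phi)\otimes\mathcal O_W$, the composite
$$
H^0(A)\otimes\mathcal O_W\longrightarrow A\xrightarrow{\ \Phi\ } B
$$
equals the natural evaluation map of $B$ precomposed with $H^0(\Phi)\otimes\mathcal O_W$. This composite is generically surjective, being the composition of two generically surjective maps: the evaluation of $A$ by the choice of $\beta$, and $\Phi$ by construction. Therefore the evaluation map $H^0(B)\otimes\mathcal O_W\to B$ is generically surjective as well, i.e.\ $B$ is generically generated by global sections. Since $\mathcal H$ and $\alpha$ were arbitrary, $\mathcal G$ is weakly positive in the sense of Definition \ref{f-def7.2}. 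The argument is essentially formal; the only points requiring care are the realization of $\widehat S^k$ as a pushforward from the common open subset $\widehat W$ and the right-exactness of symmetric powers, and I do not expect either to constitute a genuine obstacle.
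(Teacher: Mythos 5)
Your proof is correct, and it coincides with the paper's (implicit) argument: the paper states Lemma \ref{f-lem7.7} without proof, declaring it obvious from Definition \ref{f-def7.2}, and your write-up is precisely that definitional argument made explicit. The two points you single out for care are indeed the only ones needing it, and both are handled correctly: realizing $\widehat S^k$ as a pushforward from the common locally free locus is legitimate because $S^k$ of a locally free sheaf on a smooth variety is reflexive, so sections extend across the codimension-two difference, and right-exactness of $S^k$ together with naturality of the evaluation maps transports generic global generation from $\widehat S^{\alpha\beta}(\mathcal F)\otimes \mathcal H^{\otimes\beta}$ to $\widehat S^{\alpha\beta}(\mathcal G)\otimes \mathcal H^{\otimes\beta}$.
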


Let us prove the following generalization of Viehweg's theorem 
(cf.~\cite[Theorem 4.1]{viehweg1}), 
which follows from Theorem \ref{f-thm1.5}. Viehweg only considered 
the case when $i=0$ with $D=0$. 

\begin{thm}[Fundamental weak positivity theorem]\label{f-thm7.8} 
Let $f:V\to W$ be a surjective morphism from a compact K\"ahler manifold 
$V$ to a smooth projective variety $W$. 
Let $D$ be a simple normal crossing divisor on $V$ such that 
every irreducible component of $D$ is dominant onto $W$. 
Let $\Sigma$ be a simple normal crossing divisor on $W$ such that 
$f$ is smooth over $W_0=W\setminus \Sigma$ and 
that $D$ is relatively normal crossing over $W_0$, 
and $\Supp (D+f^*\Sigma)$ is a simple normal crossing 
divisor on $V$. 
Then 
the locally free sheaf $R^if_*\mathcal O_V(K_{V/W}+D)$ is weakly positive 
for every $i\geq 0$.  
\end{thm}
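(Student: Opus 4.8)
The plan is to deduce the theorem from the semipositivity theorem (Theorem~\ref{f-thm1.5}) together with Remark~\ref{f-rem7.5}, after reducing by a covering of the base to the case in which the relevant local monodromies are unipotent. First, the local freeness of $R^if_*\mathcal O_V(K_{V/W}+D)=R^if_*\omega_{V/W}(D)$ is already provided by the first assertion of Theorem~\ref{f-thm1.5}, whose identification of this sheaf with the upper canonical extension of the bottom Hodge filtration uses only the smoothness of $f$ and the relative normal crossing of $D$ over $W_0$, not any unipotency. Thus it remains to prove weak positivity.

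The core case is that of unipotent monodromies. Put $d=\dim V-\dim W$, $V_0=f^{-1}(W_0)$, and $D_0=D|_{V_0}$. If every local monodromy of $R^{d+i}f_{0*}\mathbb C_{V_0-D_0}$ around $\Sigma$ is unipotent, then the second assertion of Theorem~\ref{f-thm1.5} says that $R^if_*\omega_{V/W}(D)$ is semi-positive; since $W$ is smooth projective, Remark~\ref{f-rem7.5} immediately upgrades this to weak positivity. So the theorem holds under the extra unipotency hypothesis.

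To remove that hypothesis I would pass to a Kawamata cover $\tau\colon W'\to W$, i.e.\ a finite surjective morphism from a smooth projective variety $W'$, ramified along a sufficiently divisible multiple of $\Sigma$ (with an auxiliary divisor added to keep $W'$ smooth), chosen so that the pulled-back local monodromies become unipotent. The fibre product $V\times_W W'$ embeds in the compact K\"ahler manifold $V\times W'$, hence lies in Fujiki's class $\mathcal C$, so one may take a \emph{K\"ahler} resolution $V'$ of its main component; write $f'\colon V'\to W'$ for the induced surjection and $D'$ for a simple normal crossing divisor built from the total transform of $D$, arranged so that the hypotheses of Theorem~\ref{f-thm1.5} hold for $f'$ relative to $\tau^*\Sigma$. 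Over $W'_0=\tau^{-1}(W_0)$ the morphism $f'$ is smooth, being a base change of $f_0$, and one may take the resolution to be an isomorphism there; by construction the monodromies of $f'$ around $\tau^*\Sigma$ are unipotent. Hence the unipotent case already proved shows that $\mathcal G:=R^if'_*\omega_{V'/W'}(D')$ is weakly positive on $W'$.

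The final, and hardest, step is to descend weak positivity from $\mathcal G$ on $W'$ to $\mathcal F:=R^if_*\omega_{V/W}(D)$ on $W$. Proper smooth base change over $W_0$ identifies $\mathcal G$ with $\tau^*\mathcal F$ over $W'_0$. However, the naturally available global homomorphisms are awkward: the base change map $\tau^*\mathcal F\to\mathcal G$ points the wrong way for transferring positivity down to $\mathcal F$, while the trace map $\tau_*\mathcal G\to\mathcal F$ coming from $h\colon V'\to V$ is generically surjective and has exactly the right shape for Lemma~\ref{f-lem7.7}, but only becomes useful once one knows that $\tau_*\mathcal G$ is weakly positive---which is not clear, since symmetric powers do not commute with the finite pushforward $\tau_*$. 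This is precisely where Viehweg's covering argument enters: instead of handling $\mathcal F$ directly, one relates the symmetric powers $\widehat S^m(\mathcal F)$ to pushforwards of relative dualizing sheaves of fibre products of copies of $V$ over $W$, and uses the cover $\tau$ to control the canonical extensions across $\tau^*\Sigma$, so that the weak positivity of $\mathcal G$ yields the generic global generation demanded by Definition~\ref{f-def7.2}. I would carry this out following \cite[\S4]{viehweg1}, now with Theorem~\ref{f-thm1.5} and Remark~\ref{f-rem7.5} replacing the classical semipositivity input. The two points at which I expect genuine difficulty are securing a \emph{K\"ahler} (not merely class $\mathcal C$) resolution $V'$ equipped with a divisor $D'$ to which Theorem~\ref{f-thm1.5} applies verbatim, and making this covering comparison function in the present analytic setting without invoking the full flat base change theorem, about which the paper is deliberately cautious (see~\ref{f-say2.15}).
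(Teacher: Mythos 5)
Your first half coincides with the paper's proof: the unipotent case follows from Theorem \ref{f-thm1.5} plus Remark \ref{f-rem7.5}, and the general case is attacked by a Kawamata-type unipotent reduction $g\colon W'\to W$ with a compact K\"ahler resolution $V'$ of $V\times_W W'$ (the paper builds exactly this diagram, with $D'$ and $f'$ satisfying the hypotheses of Theorem \ref{f-thm1.5} over $W'$). But your descent step has a genuine gap, and the route you sketch to fill it is not the one that works. The missing idea is the comparison of extensions that Theorem \ref{f-thm1.5} was formulated to provide: $R^if_*\mathcal O_V(K_{V/W}+D)$ is the \emph{upper canonical} extension of the bottom Hodge filtration, while after unipotent reduction $R^if'_*\mathcal O_{V'}(K_{V'/W'}+D')$ is the \emph{canonical} extension; consequently there is a natural inclusion of locally free sheaves
$$
\varphi^i\colon R^if'_*\mathcal O_{V'}(K_{V'/W'}+D')\subset g^*R^if_*\mathcal O_V(K_{V/W}+D)
$$
which is the identity over $W'_0$. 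You asserted that ``the base change map $\tau^*\mathcal F\to \mathcal G$ points the wrong way''; in fact the available map goes the other way, $\mathcal G\hookrightarrow g^*\mathcal F$, which is generically an isomorphism, so Lemma \ref{f-lem7.7} immediately gives weak positivity of $g^*\mathcal F$ on $W'$ from that of $\mathcal G$. This single inclusion is what makes the whole descent elementary, and it is precisely what your proposal lacks.

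Once $g^*\mathcal F$ is weakly positive, your worry that ``symmetric powers do not commute with the finite pushforward $\tau_*$'' evaporates, because one never pushes $\mathcal G$ forward: since $\mathcal F$ is locally free, symmetric powers commute with the \emph{pullback}, $S^m(g^*\mathcal F)=g^*S^m(\mathcal F)$, and the projection formula gives a surjection $g_*g^*S^{2\alpha\beta}(\mathcal F)\to S^{2\alpha\beta}(\mathcal F)$. Choosing $\beta$ so that $\mathcal O_W(\beta H)\otimes g_*\mathcal O_{W'}$ is globally generated, the generic global generation of $S^{2\alpha\beta}(g^*\mathcal F)\otimes \mathcal O_{W'}(\beta g^*H)$ pushes down to generic global generation of $S^{2\alpha\beta}(\mathcal F)\otimes \mathcal O_W(2\beta H)$, which is exactly Definition \ref{f-def7.2}. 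In particular the Viehweg fibre-product machinery you invoke is both unnecessary here and problematic: it would require base change for relative dualizing sheaves of fibre products of copies of $V$ over $W$, i.e.\ the flat base change theorem in the analytic category, whose availability the paper explicitly doubts (see \ref{f-say2.15}) and which its Hodge-theoretic formulation of Theorem \ref{f-thm1.5} is designed to avoid. (The fibre-product trick belongs to the weak positivity of $f_*\omega_{X/Y}^{\otimes k}$ for $k\ge 2$; for the present statement the canonical-extension inclusion does all the work.) So the two difficulties you flag at the end are real as stated, but the second one is resolved not by carrying out Viehweg's \S 4 argument analytically, rather by never leaving the Hodge-theoretic description of the two extensions.
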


\begin{proof}
We put $V_0=f^{-1}(W_0)$, $f_0=f|_{V_0}$, $D_0=D|_{V_0}$, and 
$d=\dim V-\dim W$. We take a finite morphism $g:W'\to W$ from a smooth 
projective variety, which induces a unipotent reduction for the 
local system $R^{d+i}f_{0*}\mathbb C_{V_0- D_0}$, such 
that $\Supp (g^*\Sigma)$ is a simple normal crossing divisor on $W'$ 
(see, for example, \cite[Proposition 2.67]{kollar-mori} 
and \cite[Theorem 17]{kawamata1}). 
We can construct a commutative diagram: 
$$
\xymatrix{
V' \ar[r] \ar[d]_{f'} & V\ar[d]^{f} \\
   W' \ar[r]_{g} & W
} 
$$
with the following properties: 
\begin{itemize}
\item[(i)] $V'$ is a compact K\"ahler manifold which 
is a resolution of $V\times _W W'$, 
\item[(ii)] $f'$ is smooth over $W'_0=W'\setminus \Sigma'$, where 
$\Sigma'=\Supp (g^*\Sigma)$, 
\item[(iii)] $D'$ is a simple normal crossing divisor on $V'$ such that 
$D'$ and $f'^*\Sigma'$ have no common irreducible components and 
that $\Supp (D'+f'^*
\Sigma')$ is a simple normal crossing divisor on $V'$, and 
\item[(iv)] $f': (V', D')\to W'$ is nothing but the base change of $f:(V, D)\to W$ over 
$W'_0$. 
\end{itemize}
Then we obtain a natural inclusion 
of locally free sheaves 
$$
\varphi^i:R^if'_*\mathcal O_{V'}
(K_{V'/W'}+D')\subset g^*R^if_*\mathcal O_V(K_{V/W}
+D). 
$$
such that $\varphi^i$ is the identity over $W'_0$. 
Note that $R^if_*\mathcal O_V(K_{V/W}+D)$ is the upper canonial 
extension of the bottom Hodge filtration 
and that $R^if'_*\mathcal O_{V'}(K_{V'/W'}+D')$ is 
the canonical extension of the bottom Hodge filtration 
(see Theorem \ref{f-thm1.5}). 
By Theorem \ref{f-thm1.5}, $R^if'_*\mathcal O_{V'}(K_{V'/W'}+D')$ is nef. 
In particular, $R^if'_*\mathcal O_{V'}(K_{V'/W'}+D')$ is weakly positive 
(see Remark \ref{f-rem7.5}). Let $H$ be an ample Cartier divisor on 
$W$ and let $\alpha$ be a positive integer. 
Then 
$$
S^{2\alpha\beta}(g^*R^if_*\mathcal O_V(K_{V/W}+D))\otimes 
\mathcal O_{W'}(\beta g^*H) 
$$ 
is generically generated by global sections for some positive integer 
$\beta$ since 
$g^*R^if_*\mathcal O_V(K_{V/W}+D)$ is 
weakly positive by Lemma \ref{f-lem7.7}. 
Without loss of generality, we may assume that 
$\mathcal O_W(\beta H)\otimes g_*\mathcal O_{W'}$ is 
generated by global sections. 
We have a surjection 
$$
g_*g^*S^{2\alpha \beta}(R^if_*\mathcal O_V(K_{V/W}+D))
\to S^{2\alpha \beta}(R^if_*\mathcal O_V(K_{V/W}+D)). 
$$ 
Therefore, we have a generically surjective homomorphism 
$$
\bigoplus _{\mathrm{finite}} \mathcal O_{W}(\beta H)\otimes 
g_*\mathcal O_{W'}\to 
S^{2\alpha \beta}(R^if_*\mathcal O_V(K_{V/W}+D))\otimes \mathcal O_W(2\beta H). 
$$ 
Thus, we obtain that  
$$
S^{2\alpha \beta}(R^if_*\mathcal O_V(K_{V/W}+D))\otimes \mathcal O_W(2\beta H)
$$ 
is generically generated by global sections. This means that 
the locally free sheaf 
$R^if_*\mathcal O_V(K_{V/W}+D)$ is weakly positive. 
\end{proof}

\begin{rem}
In general, the locally free sheaf 
$R^if_*\mathcal O_V(K_{V/W}+D)$ is not necessarily 
nef. See \cite[Section 8]{fujino-fujisawa} for some examples. 
\end{rem}

\begin{rem}
In Theorem \ref{f-thm7.8}, 
it is easy to see that $R^if_*\mathcal O_V(K_{V/W}+D)$ is weakly positive 
over $W_0$ in the sense of Definition \ref{f-def7.3}.  
\end{rem}

By using the basic properties of weakly positive sheaves, we can 
obtain the following corollary of 
Theorem \ref{f-thm7.8}, which is new when $i>0$. 

\begin{cor}\label{f-cor7.11} 
Let $f:V\to W$ be a surjective morphism from a compact K\"ahler manifold 
$V$ to a smooth projective variety $W$. 
Let $D$ be a simple 
normal crossing divisor on $V$. 
Then 
the torsion-free part of 
$R^if_*\mathcal O_V(K_{V/W}+D)$, that is, 
$$
R^if_*\mathcal O_V(K_{V/W}+D)/{\mathrm{torsion}}, 
$$ 
is weakly positive. 
\end{cor}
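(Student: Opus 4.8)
The plan is to deduce Corollary~\ref{f-cor7.11} from the Fundamental weak positivity theorem (Theorem~\ref{f-thm7.8}) by two reduction steps, each of which only uses that weak positivity passes along generically surjective homomorphisms (Lemma~\ref{f-lem7.7}). First I would reduce to the case where every irreducible component of $D$ dominates $W$. Write $D=D^h+D^v$, where $D^h$ is the sum of the components dominating $W$ and $D^v$ is the sum of the remaining (vertical) components. Since $\Supp D^v$ lies over the proper closed subset $f(\Supp D^v)\subsetneq W$, the natural inclusion $\mathcal O_V(K_{V/W}+D^h)\hookrightarrow \mathcal O_V(K_{V/W}+D)$ induces a homomorphism $R^if_*\mathcal O_V(K_{V/W}+D^h)\to R^if_*\mathcal O_V(K_{V/W}+D)$ that is an isomorphism over the dense open set $W\setminus f(\Supp D^v)$, hence generically surjective on torsion-free quotients. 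By Lemma~\ref{f-lem7.7} it therefore suffices to prove the weak positivity of the torsion-free part of $R^if_*\mathcal O_V(K_{V/W}+D^h)$; that is, I may assume $D=D^h$ is horizontal, noting that $D^h$ is again simple normal crossing.

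Assuming $D$ horizontal, I would next put $f$ into the geometric position required by Theorem~\ref{f-thm7.8}. By generic smoothness in characteristic zero there is a dense open $W_0\subseteq W$ over which $f$ is smooth and $D$ is relatively normal crossing; let $\Sigma$ be a simple normal crossing divisor on $W$ with $W\setminus W_0\subseteq \Sigma$ (after, if necessary, replacing $W$ by a projective birational modification and descending weak positivity to $W$, one of the basic properties of weakly positive sheaves). Then take a projective bimeromorphic morphism $\mu\colon \widetilde V\to V$ from a compact K\"ahler manifold, obtained by blowing up smooth centers lying over $\Sigma$, such that $\mu$ is an isomorphism over $f^{-1}(W_0)$ and $\Supp\bigl(\mu^{-1}_{*}D+(f\circ\mu)^{*}\Sigma\bigr)$ is simple normal crossing on $\widetilde V$. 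Writing $\widetilde f=f\circ\mu$ and $\widetilde D=\mu^{-1}_{*}D$, the data $(\widetilde V,\widetilde D,W,\Sigma)$ satisfy all hypotheses of Theorem~\ref{f-thm7.8}, so $R^i\widetilde f_*\mathcal O_{\widetilde V}(K_{\widetilde V/W}+\widetilde D)$ is weakly positive (and locally free).

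Finally I would compare the two direct image sheaves on $W$. The key is the relative vanishing $R^q\mu_*\mathcal O_{\widetilde V}(K_{\widetilde V}+\mu^{-1}_{*}D)=0$ for $q>0$, which I would obtain from the Reid--Fukuda type vanishing of Lemma~\ref{f-lem2.19} applied to $\mu$. Granting this, the Leray spectral sequence for $f\circ\mu$ gives a natural isomorphism $R^i\widetilde f_*\mathcal O_{\widetilde V}(K_{\widetilde V/W}+\widetilde D)\cong R^if_*\mu_*\mathcal O_{\widetilde V}(K_{\widetilde V/W}+\widetilde D)$, while the inclusion $\mu_*\mathcal O_{\widetilde V}(K_{\widetilde V/W}+\widetilde D)\hookrightarrow \mathcal O_V(K_{V/W}+D)$ (an isomorphism over $f^{-1}(W_0)$) induces a homomorphism from the weakly positive sheaf $R^i\widetilde f_*\mathcal O_{\widetilde V}(K_{\widetilde V/W}+\widetilde D)$ to $R^if_*\mathcal O_V(K_{V/W}+D)$ which is an isomorphism over $W_0$, hence generically surjective. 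One further application of Lemma~\ref{f-lem7.7} to the induced map on torsion-free quotients then yields the weak positivity of $R^if_*\mathcal O_V(K_{V/W}+D)/\mathrm{torsion}$.

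The step I expect to be the main obstacle is the relative vanishing $R^q\mu_*=0$, which is what makes the weakly positive sheaf on $\widetilde V$ genuinely map onto the target on $V$ rather than being only abstractly generically isomorphic to it. The delicate point is that the relatively normal crossing hypothesis of Theorem~\ref{f-thm7.8} forces every non-dominant stratum of $D$ to lie over $\Sigma$, exactly where $\mu$ fails to be an isomorphism, so Lemma~\ref{f-lem2.19} cannot be applied with the reduced boundary $\widetilde D$ directly; one must instead perturb the boundary, giving the $\mu$-exceptional divisors coefficients $<1$ and using a $\mu$-ample combination of exceptional divisors, so that the log canonical centers of the auxiliary pair all lie in the locus where $\mu$ is an isomorphism. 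The secondary technical point is the base modification needed to render $\Sigma$ simple normal crossing together with the descent of weak positivity along it.
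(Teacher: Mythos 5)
Your proposal is correct in substance and follows the same overall strategy as the paper (reduce to Theorem \ref{f-thm7.8} via Lemma \ref{f-lem7.7}, Szab\'o's resolution lemma, and the Reid--Fukuda vanishing of Lemma \ref{f-lem2.19}), but you arrange the reductions differently. The paper inserts an extra loop before touching the base: it blows up every vertical log canonical center $C$ of $(V,D)$ \emph{crepantly}, i.e.\ with $K_{V'}+D'=h^*(K_V+D)$, and drops vertical components again, iterating until \emph{every stratum} (not merely every component) of $D$ dominates $W$. This buys two things your route forgoes: $R^if_*\mathcal O_V(K_{V/W}+D)$ is then already torsion-free by Theorem \ref{f-thm1.4} (i), and the later blow-ups $\mu$ over $f^{-1}(\Sigma)$ have only nonnegative discrepancies, so $\mu_*\mathcal O_{\widetilde V}(K_{\widetilde V}+\widetilde D)=\mathcal O_V(K_V+D)$ exactly and all comparison maps are honest isomorphisms. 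You instead keep the vertical strata until the Szab\'o step and accept only a generically isomorphic inclusion $\mu_*\mathcal O_{\widetilde V}(K_{\widetilde V/W}+\widetilde D)\subset\mathcal O_V(K_{V/W}+D)$, compensating with Lemma \ref{f-lem7.7}; since only the torsion-free quotient is claimed, that is perfectly adequate, and your Leray argument with $R^q\mu_*=0$ is sound.

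Two corrections. First, the step you single out as the main obstacle is actually not one: once $\Supp\bigl(\widetilde D+\widetilde f^{\,*}\Sigma\bigr)$ is simple normal crossing with every component of $\widetilde D$ horizontal, no stratum of $\widetilde D$ can be contained in $\widetilde f^{\,-1}(\Sigma)$ (a stratum of codimension $k+1$ lying in a vertical component $G$ would sit inside $\widetilde D_{i_0}\cap\cdots\cap\widetilde D_{i_k}\cap G$, which is of pure codimension $k+2$ by simple normal crossing-ness). Hence every lc center of $(\widetilde V,\widetilde D)$ has its general point off $\Exc(\mu)\subset\widetilde f^{\,-1}(\Sigma)$, and Lemma \ref{f-lem2.19} applies directly with the reduced boundary $\widetilde D$ and $\mathcal L-(K_{\widetilde V}+\widetilde D)=0$; no klt perturbation of exceptional coefficients is needed (your fallback would also work, but it is an unnecessary detour). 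Second, you underestimate the point you call secondary: descending along $g\colon W'\to W$ is where the paper invests real work. Generic global generation does descend by the $kg^*H-F$ trick, but to conclude via Lemma \ref{f-lem7.7} on $W$ you still need an actual sheaf homomorphism from $g_*$ of your weakly positive sheaf to $R^if_*\mathcal O_V(K_{V/W}+D)/\mathrm{torsion}$; the Leray edge map goes the wrong way, and the paper reverses it by proving $R^pg_*R^qf'_*=0$ for $p>0$ (Proposition \ref{f-prop7.15}, resting on Lemma \ref{f-lem7.14}), yielding $g_*R^if'_*\simeq R^if_*$. Alternatively, as the paper remarks right after its proof, one may extend the generic isomorphism across the codimension-two non-isomorphism locus of $g$ to the reflexive hulls and invoke Remark \ref{f-rem7.4}. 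You should also record the bookkeeping twist $K_{V/W}=K_{V/W'}+f'^*E$ with $E=K_{W'}-g^*K_W$ effective and $g$-exceptional, which your sketch omits.
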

\begin{proof}
By replacing $D$ with its horizontal part, we may assume that 
every irreducible component of $D$ is dominant 
onto $W$ (see Lemma \ref{f-lem7.7}). 
If there is a log canonical center $C$ of $(V, D)$ such that 
$f(C)\subsetneq W$, then we take the blow-up 
$h:V'\to V$ along $C$. 
We put 
$$
K_{V'}+D'=h^*(K_V+D). 
$$ 
Then $D'$ is a simple normal crossing divisor on $V'$ and 
$$
R^if_*\mathcal O_V(K_{V/W}+D)\simeq R^i(f\circ h)_*\mathcal O_{V'}(K_{V'/W}+D')
$$ 
for every $i$. 
Therefore, we can replace $(V, D)$ with $(V', D')$. 
Then we replace $D$ with its horizontal part (see 
Lemma \ref{f-lem7.7}). 
By repeating this process finitely many times, 
we may assume that every stratum of $D$ is dominant onto $W$. 
In this case, $R^if_*\mathcal O_V(K_{V/W}+D)$ is torsion-free 
by Theorem \ref{f-thm1.4} (i). 
Now we take a closed subset $\Sigma$ of $W$ such that 
$f$ is smooth over $W\setminus \Sigma$ and that $D$ is relatively 
normal crossing over $W\setminus \Sigma$. 
Let $g:W'\to W$ be a birational morphism from 
a smooth projective variety $W'$ such that $\Sigma'=g^{-1}(\Sigma)$ is a simple 
normal crossing divisor. By taking some suitable 
blow-ups of $V$ in $f^{-1}(\Sigma)$ and 
replacing $D$ with its strict transform, we may further assume the following 
conditions: 
\begin{itemize}
\item[(i)] $f'=g^{-1}\circ f:V\to W'$ is a morphism, 
\item[(ii)] $f'$ is smooth over $W'\setminus \Sigma'$ and $D$ is relatively 
normal crossing over $W'\setminus \Sigma'$, and 
\item[(iii)] every irreducible component of $D$ is dominant 
onto $W$ and $\Supp (f'^*\Sigma'+D)$ is a simple normal crossing divisor 
on $V$. 
\end{itemize}
$$
\xymatrix{
V\ar[d]_{f'} \ar[dr]^{f}& \\
   W' \ar[r]_{g} & W
} 
$$
Here we used Szab\'o's resolution lemma (see Remark \ref{f-rem2.5}) and 
Lemma \ref{f-lem2.19}. 
Then, by Theorem \ref{f-thm7.8}, 
$R^if'_*\mathcal O_V(K_{V/W'}+D)$ is weakly positive. 
Note that 
$$
R^if'_*\mathcal O_V(K_{V/W}+D)\simeq R^if'_*\mathcal 
O_V(K_{V/W'}+D)\otimes \mathcal O_{W'}(E)
$$ 
where $E$ is a $g$-exceptional effective divisor such that $K_{W'}
=g^*K_W+E$.
Thus $R^if'_*\mathcal O_V(K_{V/W}+D)$ is weakly positive. 
We note that 
$$g_*R^if'_*\mathcal O_{V}(K_{V/W}+D)\simeq R^if_*\mathcal O_V(K_{V/W}+D).$$  
Here we used the fact that 
$$
R^pg_*R^qf'_*\mathcal O_V(K_{V/W}+D)=0
$$ 
for every $p>0$ and $q\geq 0$ by Proposition \ref{f-prop7.15} below. 
We can take an effective $g$-exceptional divisor $F$ on $W'$ such that 
$-F$ is $g$-ample. 
Let $H$ be an ample Cartier divisor 
on $W$. 
Then there exists a positive integer $k$ such that 
$kg^*H-F$ is ample. 
Since $R^if'_*\mathcal O_V(K_{V/W}+D)$ is weakly positive, 
$$
S^{k\alpha\beta}(R^if'_*\mathcal O_V(K_{V/W}+D))\otimes 
\mathcal O_{W'}(\beta(kg^*H-F))
$$ 
is generically generated by 
global sections. 
By taking $g_*$, 
$$
\widehat {S}^{\alpha k \beta}(R^if_*\mathcal O_V(K_{V/W}+D))\otimes 
\mathcal O_{W}(k\beta H)
$$ 
is generically generated by global sections. 
This means that the torsion-free sheaf 
$R^if_*\mathcal O_V(K_{V/W}+D)$ is weakly positive. 
\end{proof}

\begin{rem} In the proof of Corollary \ref{f-cor7.11}, 
the following isomorphism 
$$
\left( g_*R^if'_*\mathcal O_{V}(K_{V/W}+D)\right)^{**}
\simeq \left(R^if_*\mathcal O_V(K_{V/W}+D)\right)^{**}
$$ 
is obvious since $g$ is birational. 
This isomorphism is sufficient for the proof of 
the weak positivity of $R^if_*\mathcal O_{V}(K_{V/W}+D)$ (see 
Remark \ref{f-rem7.4}) although 
we used a shaper isomorphism 
$$
g_*R^if'_*\mathcal O_{V}(K_{V/W}+D)
\simeq R^if_*\mathcal O_V(K_{V/W}+D). 
$$
\end{rem}

\begin{rem}
In Corollary \ref{f-cor7.11}, 
we take a Zariski open set $W_0$ of $W$ such that 
$f$ is smooth over $W_0$ and that $D$ is relatively normal crossing 
over $W_0$. 
Then we see that 
$$R^if_*\mathcal O_V(K_{V/W}+D)/\mathrm{torsion}$$ is weakly 
positive over $W_0$ in the sense of Definition \ref{f-def7.3} by the proof of 
Corollary \ref{f-cor7.11}.  
\end{rem}

Before we give a proof of Proposition \ref{f-prop7.15}, 
we prepare an easy lemma. 

\begin{lem}\label{f-lem7.14} 
Theorem \ref{f-thm1.4} (ii) holds true even when 
$Y$ is a complex manifold in Fujiki's class $\mathcal C$ and $\Delta$ is an effective $\mathbb R$-divisor on $Y$ such that $(Y, \Delta)$ is dlt. 
\end{lem}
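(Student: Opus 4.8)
The plan is to reduce the statement to Theorem \ref{f-thm1.4} (ii) itself, applied on a log resolution of the dlt pair $(Y,\Delta)$, and then to descend the resulting vanishing along that resolution by relative Kawamata--Viehweg vanishing. First I would choose a projective bimeromorphic morphism $g\colon Y'\to Y$ from a complex manifold $Y'$ in Fujiki's class $\mathcal C$ which is a log resolution of $(Y,\Delta)$, which is an isomorphism over the open set $U\subseteq Y$ on which $(Y,\Delta)$ is already simple normal crossing, and for which $a(E,Y,\Delta)>-1$ holds for every $g$-exceptional divisor $E$. Such a $g$ exists by Szab\'o's resolution lemma (Remark \ref{f-rem2.5}), and $Y'$ stays in Fujiki's class $\mathcal C$ since this class is stable under bimeromorphic modifications. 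The dlt hypothesis enters here in an essential way: because every log canonical center of $(Y,\Delta)$ meets $U$, the $g$-exceptional divisors all have center in $Y\setminus U$ and the discrepancy bound $a(E,Y,\Delta)>-1$ can be guaranteed.

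Next I would set up the divisors on $Y'$. Writing $K_{Y'}+\Delta'=g^*(K_Y+\Delta)+F$, I take $\Delta'$ to consist of $g^{-1}_*\Delta$ together with the exceptional divisors of nonpositive discrepancy (with coefficient $-a(E,Y,\Delta)\in[0,1)$), and $F\ge 0$ to be the effective $g$-exceptional divisor collecting the exceptional divisors of positive discrepancy; then $\Supp\Delta'$ is simple normal crossing and, by the discrepancy bound, $\lfloor\Delta'\rfloor=g^{-1}_*\lfloor\Delta\rfloor$. I then put $\mathcal L'=g^*\mathcal L\otimes\mathcal O_{Y'}(\lceil F\rceil)$ and $\widetilde\Delta'=\Delta'+(\lceil F\rceil-F)$, so that $\widetilde\Delta'$ is again a boundary $\mathbb R$-divisor with simple normal crossing support and still $\lfloor\widetilde\Delta'\rfloor=g^{-1}_*\lfloor\Delta\rfloor$. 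A direct computation using $\mathcal L-(K_Y+\Delta)\sim_{\mathbb R}f^*H$ gives $\mathcal L'-(K_{Y'}+\widetilde\Delta')\sim_{\mathbb R}(f\circ g)^*H$. Hence $(Y',\widetilde\Delta')$, $h:=f\circ g\colon Y'\to X$, and $\mathcal L'$ satisfy exactly the hypotheses of Theorem \ref{f-thm1.4} (ii) with the \emph{same} ample $\mathbb R$-divisor $H$, so $H^p(X,R^qh_*\mathcal L')=0$ for all $p>0$ and $q\ge 0$.

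It then remains to identify $R^qh_*\mathcal L'$ with $R^qf_*\mathcal L$. By the projection formula $R^qg_*\mathcal L'=\mathcal L\otimes R^qg_*\mathcal O_{Y'}(\lceil F\rceil)$, and since $\lceil F\rceil$ is effective and $g$-exceptional we have $g_*\mathcal O_{Y'}(\lceil F\rceil)=\mathcal O_Y$. For the higher direct images I would invoke Lemma \ref{f-lem2.19}: one computes $\lceil F\rceil-(K_{Y'}+\widetilde\Delta')=-g^*(K_Y+\Delta)$, which is $g$-numerically trivial and in particular $g$-nef, while $g$ is an isomorphism at the generic point of every log canonical center of $(Y',\widetilde\Delta')$, since these centers are the strict transforms of the log canonical strata of $(Y,\Delta)$, whose generic points lie over $U$. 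Thus $R^qg_*\mathcal O_{Y'}(\lceil F\rceil)=0$ for $q>0$, so $g_*\mathcal L'=\mathcal L$ and $R^{>0}g_*\mathcal L'=0$; the Leray spectral sequence for $h=f\circ g$ then degenerates and yields $R^qh_*\mathcal L'\cong R^qf_*\mathcal L$, which finishes the proof.

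The main obstacle is precisely this last relative vanishing. Everything hinges on $g$ being an isomorphism at the generic points of the log canonical centers of $(Y',\widetilde\Delta')$, and this is exactly where the dlt assumption (rather than mere lc) is indispensable: the bound $a(E,Y,\Delta)>-1$ on $g$-exceptional divisors is what keeps any such divisor out of $\lfloor\widetilde\Delta'\rfloor$ and prevents the creation of a new log canonical center over which $g$ contracts. A secondary, purely routine point is that Lemma \ref{f-lem2.19} is stated for $\mathbb Q$-boundaries, so one first perturbs the fractional coefficients of $\widetilde\Delta'$ to nearby rationals, keeping $\lfloor\widetilde\Delta'\rfloor$ fixed and the relevant difference $g$-nef, before applying it.
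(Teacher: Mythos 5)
Your proposal is correct and is essentially the paper's own proof: Fujino likewise takes a log resolution $h\colon Y'\to Y$ that is an isomorphism over the general points of the log canonical centers of $(Y,\Delta)$ (via Szab\'o's resolution lemma, Remark \ref{f-rem2.5}), writes $K_{Y'}+\Delta_{Y'}=h^*(K_Y+\Delta)+E$ with $\Delta_{Y'}$ a boundary and $E$ effective Cartier $h$-exceptional (your $\widetilde\Delta'$ and $\lceil F\rceil$), applies Theorem \ref{f-thm1.4} (ii) to $f\circ h$ and $h^*\mathcal L\otimes \mathcal O_{Y'}(E)$, and then uses Lemma \ref{f-lem2.19} to get $R^{i}h_*(h^*\mathcal L\otimes\mathcal O_{Y'}(E))=0$ for $i>0$ and $h_*(h^*\mathcal L\otimes\mathcal O_{Y'}(E))\simeq\mathcal L$, concluding by Leray. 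Your explicit bookkeeping of the rounding and the perturbation of $\widetilde\Delta'$ to rational coefficients before invoking Lemma \ref{f-lem2.19} merely spells out details the paper leaves implicit.
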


\begin{proof}
Let $h:Y'\to Y$ be a resolution such that $h$ is an isomorphism 
over the general points of any log canonical center of $(Y, \Delta)$ (see 
Remark \ref{f-rem2.5}). 
We can write 
$$
K_{Y'}+\Delta_{Y'}=h^*(K_Y+\Delta)+E
$$
where $\Delta_{Y'}$ and $E$ are effective, 
$E$ is Cartier and $h$-exceptional, 
$\Delta_{Y'}$ is a boundary $\mathbb R$-divisor, and $\Supp (\Delta_{Y'}+E)$ is 
a simple normal crossing divisor on $Y'$. 
Then \begin{align*}
h^*\mathcal L\otimes \mathcal O_{Y'}(E)-(K_{Y'}+\Delta_{Y'})&=
h^*(\mathcal L-(K_Y+\Delta))\\
&\sim _{\mathbb R} h^*f^*H. 
\end{align*}
By Theorem \ref{f-thm1.4} (ii), we obtain that 
$$
H^p(X, R^q(f\circ h)_*(h^*\mathcal L\otimes \mathcal O_{Y'}(E)))=0
$$ 
for every $p>0$ and $q\geq 0$. 
Note that $R^ih_*(h^*\mathcal L\otimes \mathcal O_{Y'}(E))=0$ for every $i>0$ 
by Lemma \ref{f-lem2.19}. 
Thus we obtain 
\begin{align*}
H^p(X, R^qf_*\mathcal L)\simeq 
H^p(X, R^q(f\circ h)_*(h^*\mathcal L\otimes \mathcal O_{Y'}(E)))=0
\end{align*}
for every $p>0$ and $q\geq 0$. Note that 
$h_*(h^*\mathcal L\otimes \mathcal O_{Y'}(E))\simeq \mathcal L$. 
\end{proof}

\begin{prop}\label{f-prop7.15} 
Let $f:X\to Y$ be a surjective morphism such that 
$X$ is a complex manifold in Fujiki's class $\mathcal C$ and 
$Y$ is a projective variety. 
Let $D$ be a simple normal crossing divisor on $X$ such that 
every stratum of $D$ is dominant onto $Y$. 
Let $g:Y\to Z$ be a birational morphism 
between projective varieties. 
Then 
$$
R^pg_*R^qf_*\mathcal O_X(K_X+D)=0
$$ 
for every $p>0$ and $q\geq 0$. 
\end{prop}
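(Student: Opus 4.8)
The plan is to compare the higher direct images along $f$, $g$, and the composite $h=g\circ f$ through the Grothendieck spectral sequence, feeding in the torsion-freeness and vanishing already supplied by Theorem~\ref{f-thm1.4}. Write $\mathcal F_q=R^qf_*\mathcal O_X(K_X+D)$. First I would record two structural facts. Since every stratum of $D$ is dominant onto $Y$ and $g$ is surjective, every log canonical stratum of $(X,D)$ (namely $X$ itself and the strata of $D$) has $h$-image equal to $Z$; hence by Theorem~\ref{f-thm1.4}~(i) applied to $h\colon X\to Z$ the sheaf $R^nh_*\mathcal O_X(K_X+D)$ has only the generic point of $Z$ as an associated prime, i.e.\ it is torsion-free on $Z$ (and likewise each $\mathcal F_q$ is torsion-free on $Y$). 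On the other hand, because $g$ is birational it is an isomorphism over a dense open subset of $Z$, so $R^pg_*\mathcal F_q$ is supported on the locus where $g$ fails to be an isomorphism for every $p>0$; in particular each such sheaf is a torsion sheaf on $Z$.

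Next I would write down the Grothendieck spectral sequence
$$
E_2^{p,q}=R^pg_*R^qf_*\mathcal O_X(K_X+D)\Longrightarrow R^{p+q}h_*\mathcal O_X(K_X+D).
$$
The conceptual heart is that this sequence degenerates at $E_2$: granting that, the abutment $R^nh_*\mathcal O_X(K_X+D)$ carries a finite filtration whose graded pieces are the $E_2^{p,n-p}$, every stage of the filtration being a subsheaf of the torsion-free sheaf $R^nh_*\mathcal O_X(K_X+D)$ and hence itself torsion-free. Since the terms $E_2^{p,n-p}$ with $p>0$ are torsion by the previous paragraph, a stage built out of torsion subquotients is both torsion and torsion-free, so it vanishes; this forces $E_2^{p,q}=R^pg_*R^qf_*\mathcal O_X(K_X+D)=0$ for every $p>0$, which is exactly the assertion. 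To obtain the $E_2$-degeneration itself I would run Kollár's argument, whose essential analytic input is the injectivity theorem Theorem~\ref{f-thm1.3}; the vanishing half of that package is immediate here, since applying Theorem~\ref{f-thm1.4}~(ii) to $h\colon X\to Z$ with $\mathcal L=\mathcal O_X(K_X+D)\otimes h^*\mathcal O_Z(mA)$ for an ample divisor $A$ on $Z$ gives, via the projection formula,
$$
H^p\!\left(Z,\,R^qh_*\mathcal O_X(K_X+D)\otimes \mathcal O_Z(mA)\right)=0\qquad(p>0,\ m\ge 1).
$$

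An equivalent, more hands-on way to finish, which I would keep in reserve, is to reduce directly to a cohomological vanishing on $Y$. Using Serre vanishing and the Leray spectral sequence for $g$, together with the global generation of $R^pg_*\mathcal F_q\otimes\mathcal O_Z(mA)$ for $m\gg 0$, one checks that $R^pg_*\mathcal F_q=0$ is equivalent to $H^p(Y,\mathcal F_q\otimes g^*\mathcal O_Z(mA))=0$ for $p>0$ and $m\gg 0$. The main obstacle is precisely this last vanishing: the twist $g^*\mathcal O_Z(mA)$ is semi-ample and big on $Y$ but never ample, so Theorem~\ref{f-thm1.4}~(ii) does not apply verbatim. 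I expect to handle it through Theorem~\ref{f-thm1.3}, using the bigness of $g^*A$ to realize a suitable multiple as $D+D'$ with $D'$ effective; the delicate point will be to arrange the supports of these auxiliary effective divisors so as to contain no log canonical stratum of $(X,D)$ — the same strata that govern the torsion-freeness above — and to keep this compatible with the birational morphism $g$.
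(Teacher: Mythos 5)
Your fallback route is in fact the paper's actual proof up to its final step, but both of your routes stop short, and in each case the missing piece is the substantive one. In the primary route, the structural bookkeeping is sound: granting $E_2$-degeneration, the sandwich argument (torsion $E_2^{p,q}$ for $p>0$ versus the torsion-free abutment $R^nh_*\mathcal O_X(K_X+D)$ supplied by Theorem~\ref{f-thm1.4}~(i) applied to $h$) does force the vanishing. But the degeneration itself is not something you can quote: in this torsion/torsion-free situation it is essentially equivalent to the statement being proved, and nothing in the paper supplies it. What would supply it is the splitting $Rf_*\mathcal O_X(K_X+D)\simeq \bigoplus_q R^qf_*\mathcal O_X(K_X+D)[-q]$ in the derived category (Koll\'ar's decomposition theorem), which for a compact manifold in Fujiki's class $\mathcal C$ and a boundary divisor $D$ is exactly the mixed-Hodge-module-strength input the paper deliberately avoids; the vanishing you extract from Theorem~\ref{f-thm1.4}~(ii) applied to $h\colon X\to Z$ is a much weaker consequence and does not by itself run Koll\'ar's degeneration argument.

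In the fallback route, your reduction via Serre vanishing, the projection formula, and the Leray spectral sequence for $g$ to the single statement $H^p(Y,R^qf_*\mathcal O_X(K_X+D)\otimes g^*\mathcal A)=0$ for $p>0$ is precisely how the paper's proof begins, and you correctly identify the obstacle ($g^*\mathcal A$ is nef and big but not ample). But you leave the resolution as a hope, and the ``delicate point'' you flag about supports is exactly where the hypothesis does all the work, with no arranging needed: by Kodaira's lemma on $Y$ write $g^*\mathcal A\sim_{\mathbb Q}H+E$ with $H$ ample and $E$ effective; since $E$ is a divisor on $Y$ and every stratum of $D$ dominates $Y$, the pullback $f^*E$ automatically contains no log canonical stratum of $(X,D)$, so $(X,D+\varepsilon f^*E)$ is dlt for $0<\varepsilon\ll 1$, and $\mathcal O_X(K_X+D+f^*g^*\mathcal A)-(K_X+D+\varepsilon f^*E)\sim_{\mathbb Q}f^*\bigl((1-\varepsilon)g^*\mathcal A+\varepsilon H\bigr)$ is the pullback of an ample class on $Y$. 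The paper then concludes not with Theorem~\ref{f-thm1.3} but with Lemma~\ref{f-lem7.14}, the dlt extension of Theorem~\ref{f-thm1.4}~(ii) proved immediately beforehand via a resolution and the Reid--Fukuda vanishing (Lemma~\ref{f-lem2.19}). So your first route rests on an unproved (and here unavailable) degeneration, and your second stalls precisely at the perturbation trick---absorb the effective part of Kodaira's lemma, pulled back from $Y$, into the boundary---that constitutes the proof.
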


\begin{proof}
Let $\mathcal A$ be a sufficiently ample line bundle 
on $Z$ such that 
$$
H^r(Z, R^pg_*R^qf_*\mathcal O_X(K_X+D)\otimes \mathcal A)=0
$$ 
for $p>0, q\geq 0$, and $r>0$ 
and that 
$$
R^pg_*R^qf_*\mathcal O_X(K_X+D)\otimes \mathcal A
$$ 
is generated by global sections for $p>0$ and $q\geq 0$. 
By the Leray spectral sequence, we have 
\begin{align*}
&H^0(Z, R^pg_*R^qf_*\mathcal O_X(K_X+D)\otimes \mathcal A)
\\ &\simeq H^p(Y, R^qf_*\mathcal O_X(K_X+D+f^*g^*\mathcal A)).  
\end{align*} 
Therefore, it is sufficient to prove that 
$$
H^p(Y, R^qf_*\mathcal O_X(K_X+D+f^*g^*\mathcal A))=0
$$ 
for $p>0$ and $q\geq 0$. 
By Kodaira's lemma, we can write 
$$
g^*\mathcal  A\sim _{\mathbb Q} H+E
$$  
where $H$ is an ample $\mathbb Q$-divisor 
on $Y$ and $E$ is an effective $\mathbb Q$-Cartier $\mathbb Q$-divisor on $Y$. 
Let $\varepsilon$ be a sufficiently small positive number. 
Then $(X, D+\varepsilon f^*E)$ is dlt and 
\begin{align*}
\mathcal O_X(K_X+D+f^*g^*\mathcal A)-(K_X+D+\varepsilon f^*E)\sim_
{\mathbb Q}
f^*((1-\varepsilon)g^*\mathcal A+\varepsilon H). 
\end{align*} 
Therefore, by Lemma \ref{f-lem7.14}, we obtain that 
$$
H^p(Y, R^qf_*\mathcal O_X(K_X+D+f^*g^*\mathcal A))=0 
$$ 
for $p>0$ and $q\geq 0$. 
\end{proof}

\section{Twisted weak positivity}\label{f-sec8}

This section is the main part of this paper. Here, 
we prove the twisted weak positivity 
theorem:~Theorem \ref{f-thm1.1}. 

Lemma \ref{f-lem8.1} is a slight generalization of \cite[Lemma 5.1]{viehweg1}. 
It follows from Corollary \ref{f-cor7.11} by the usual covering trick. 

\begin{lem}[{cf.~\cite[Lemma 5.1]{viehweg1}}]\label{f-lem8.1}
Let $f:X\to Y$ be a surjective morphism from a compact 
K\"ahler manifold $X$ to a smooth projective variety $Y$. 
Let $D$ be a simple normal crossing divisor on $X$. 
Let $\mathcal L$ and $\mathcal N$ be line bundles on 
$X$ and let $C$ be an effective divisor on $X$ such that 
$\mathcal L^{N}=\mathcal N+C$ for some positive integer $N$, 
$D$ and $C$ have no common components, and $\Supp (D+C)$ is 
a simple normal crossing divisor on $X$. 
Assume that there is a nonempty Zariski open set $U$ of $Y$ such that 
some power of $\mathcal N$ is generated over $f^{-1}(U)$ by global sections. 
Then the sheaf 
$$
f_*\mathcal O_X(K_{X/Y}+D+\mathcal L^{(i)}) 
$$ 
is weakly positive for $0\leq i\leq N-1$, where 
$$
\mathcal L^{(i)}=\mathcal L^i\otimes \mathcal O_X\left(-\left\lfloor \frac {iC}{N}\right\rfloor\right). 
$$
\end{lem}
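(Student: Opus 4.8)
The plan is to reduce the statement to the fundamental weak positivity theorem, Corollary \ref{f-cor7.11}, by Viehweg's cyclic covering trick: I would realize $f_*\mathcal O_X(K_{X/Y}+D+\mathcal L^{(i)})$ as a direct summand of a direct image of a relative dualizing sheaf on a suitable finite cover of $X$, for which Corollary \ref{f-cor7.11} applies.

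First I would build the cover. Let $s$ be a positive integer with $\mathcal N^s$ generated by global sections over $f^{-1}(U)$, and choose a general member $B\in|\mathcal N^s|$; generality ensures that $B$ is reduced, shares no component with $D$ or $C$, and that $\Supp(D+C+B)$ is simple normal crossing over $U$. Since $\mathcal L^{Ns}=\mathcal N^s\otimes\mathcal O_X(sC)=\mathcal O_X(B+sC)$, the divisor $A:=B+sC$ is the zero divisor of a section of $\mathcal L^{Ns}$, and I take the normalized $Ns$-fold cyclic cover $\tau\colon\widetilde X\to X$ associated to $A$ (see \cite[3.5]{ev}). Then $\widetilde X$ has only quotient singularities, lies in Fujiki's class $\mathcal C$ (Remark \ref{f-rem2.12}), and the Galois group $\mathbb Z/Ns\mathbb Z$ acts with the standard eigensheaf decomposition
\[
\tau_*\omega_{\widetilde X/X}=\bigoplus_{j=0}^{Ns-1}\mathcal L^{j}\otimes\mathcal O_X\Bigl(-\bigl\lfloor\tfrac{jA}{Ns}\bigr\rfloor\Bigr).
\]
For $0\le j=i\le N-1$ the coefficients of $\tfrac{iB}{Ns}$ are all $<1$, and $B$, $C$ share no component, so $\lfloor\tfrac{iA}{Ns}\rfloor=\lfloor\tfrac{iB}{Ns}+\tfrac{iC}{N}\rfloor=\lfloor\tfrac{iC}{N}\rfloor$; hence the $j=i$ summand is exactly $\mathcal L^{(i)}$.

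Next I would pass to a resolution. Taking a $\mathbb Z/Ns\mathbb Z$-equivariant resolution $h\colon Z\to\widetilde X$ with $Z$ compact K\"ahler (Remark \ref{f-rem2.5}), I set $\tau_Z=\tau\circ h$ and $g_Z=f\circ\tau_Z$, and I choose a simple normal crossing divisor $D_Z$ on $Z$ (collecting the reduced transform of $\tau^*D$ together with the exceptional and ramification contributions) so that, using that $\widetilde X$ has rational singularities and the Reid--Fukuda vanishing of Lemma \ref{f-lem2.19}, one has $h_*\mathcal O_Z(K_{Z/\widetilde X}+D_Z)=\omega_{\widetilde X/X}\otimes\tau^*\mathcal O_X(D)$ compatibly with the Galois action. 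Combining this with the displayed decomposition and the projection formula gives
\[
g_{Z*}\mathcal O_Z(K_{Z/Y}+D_Z)=\bigoplus_{j=0}^{Ns-1}f_*\Bigl(\mathcal O_X(K_{X/Y}+D)\otimes\mathcal L^{j}\otimes\mathcal O_X\bigl(-\lfloor\tfrac{jA}{Ns}\rfloor\bigr)\Bigr),
\]
whose $j=i$ term is $f_*\mathcal O_X(K_{X/Y}+D+\mathcal L^{(i)})$ for $0\le i\le N-1$. Then I would apply Corollary \ref{f-cor7.11} to $g_Z\colon Z\to Y$ with the divisor $D_Z$: the torsion-free part of $g_{Z*}\mathcal O_Z(K_{Z/Y}+D_Z)$ is weakly positive. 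As $Y$ is integral, each $f_*\mathcal O_X(K_{X/Y}+D+\mathcal L^{(i)})$ is torsion-free and is a direct summand of this sheaf, so the split projection onto it is generically surjective and Lemma \ref{f-lem7.7} yields the desired weak positivity for every $0\le i\le N-1$.

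The hard part will be staying inside the compact K\"ahler (Fujiki class $\mathcal C$) category rather than the projective one: I must check that $\widetilde X$ lies in class $\mathcal C$ and admits a K\"ahler resolution $Z$, since this is precisely what lets me invoke Corollary \ref{f-cor7.11}. The second delicate point is the bookkeeping that produces $h_*\mathcal O_Z(K_{Z/\widetilde X}+D_Z)=\omega_{\widetilde X/X}(\tau^*D)$ globally (not merely over $U$) while respecting the Galois grading, so that the $j=i$ eigensheaf is identified with $\mathcal L^{(i)}$ exactly; the floor computation above shows the identification is correct provided $B$ is reduced, is chosen general over $U$, and shares no component with $C$, with the singularities over $Y\setminus U$ absorbed by the resolution.
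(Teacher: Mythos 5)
Your strategy is essentially the paper's own: the paper first reduces to the case $s=1$ (replacing $N$ by $Ns$, $C$ by $sC$, and $\mathcal N$ by $\mathcal N^s$), which is exactly equivalent to your degree-$Ns$ cyclic cover, then takes the normalized cover, identifies the eigensheaf $\mathcal L^{(i)}$ by the same floor computation, pushes the log canonical divisor down from a resolution, and concludes by Corollary \ref{f-cor7.11} together with Lemma \ref{f-lem7.7} applied to the direct summand. Your closing remarks about staying in Fujiki's class $\mathcal C$ and choosing a compact K\"ahler resolution are also points the paper needs (via Remarks \ref{f-rem2.12} and \ref{f-rem2.5}), and your Galois-equivariant resolution is an unnecessary complication: pushing forward first to the singular cover $Z'$ via $q_*\mathcal O_Z(K_Z+D^\dagger)\simeq \mathcal O_{Z'}(K_{Z'}+p^*D)$, which uses that $(Z',p^*D)$ is log canonical, and only then taking the eigensheaf decomposition on $X$, avoids equivariance entirely.

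There is, however, one genuine gap. You claim that a general member $B\in |\mathcal N^s|$ is reduced and shares no component with $C$ or $D$, and on that basis you compute $\lfloor iA/(Ns)\rfloor=\lfloor iC/N\rfloor$ globally. But the hypothesis only gives generation of $\mathcal N^s$ over $f^{-1}(U)$, so $|\mathcal N^s|$ may have base locus inside $X\setminus f^{-1}(U)$: a general member decomposes as $B+F$, where the fixed part $F$ is vertical over $Y\setminus U$ and can be non-reduced and can contain components of $C$ or of $D$. Along such components your floor identity fails, so the $i$-th eigensheaf is not $\mathcal L^{(i)}$ but $\mathcal L^{(i)}$ twisted down by an effective vertical divisor; moreover $\Supp(B+sC+D)$ need not be simple normal crossing there, so your asserted global identity $h_*\mathcal O_Z(K_{Z/\widetilde X}+D_Z)=\omega_{\widetilde X/X}\otimes\tau^*\mathcal O_X(D)$ is unjustified as it stands. ``Absorbing the singularities over $Y\setminus U$ into the resolution'' does not repair this, because the defect lies in the identification of the eigensheaf, not merely in the singularities of the cover. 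The paper's fix is routine but necessary: write the general zero set as $B+F$ with every component of $B$ horizontal and $\Supp F\subset X\setminus f^{-1}(U)$, perform a bimeromorphic modification of $X$ outside $f^{-1}(U)$ (Remark \ref{f-rem2.5}) so that $\Supp(B+D+C+F)$ becomes simple normal crossing, observe that the pushforward for the modified data injects into $\mathcal O_X(K_{X/Y}+D+\mathcal L^{(i)})$ with equality over $f^{-1}(U)$, and then replace $C$ by $C+F$. Since the discrepancy is effective and vertical over $Y\setminus U$, the resulting summand maps to $f_*\mathcal O_X(K_{X/Y}+D+\mathcal L^{(i)})$ generically isomorphically, and because weak positivity in the sense of Definition \ref{f-def7.2} is a generic condition, Lemma \ref{f-lem7.7} transfers it, completing the argument.
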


\begin{proof}
Since the statement is compatible with replacing $N$ by $NN'$, 
$C$ by $N'C$, and $\mathcal N$ by $\mathcal N^{N'}$ for some 
positive integer $N'$, 
we may assume that $\mathcal N$ itself is generated by 
global sections over $f^{-1}(U)$. 
This means that the base locus of $|\mathcal N|$ is contained in 
$X\setminus f^{-1}(U)$. 
Without loss of generality, we may shrink $U$ if necessary. 
Let $B+F$ be the zero set of a general section of $\mathcal N$ such that 
every irreducible component of $B$ is dominant onto $Y$ and 
that $\Supp F\subset X\setminus f^{-1}(U)$. 
By Bertini's theorem, 
$B$ is smooth and $\Supp (B+D+C)$ is a simple normal crossing 
divisor on $f^{-1}(U)$. 
We note that $\mathcal N=\mathcal O_X(B+F)$. 
By taking a suitable bimeromorphic modification outside $f^{-1}(U)$, 
we may assume that $B$ is smooth and that $\Supp (B+D+C+F)$ is a simple 
normal crossing 
divisor (see Remark \ref{f-rem2.5}). In fact, if $h:\widetilde X\to X$ is a bimeromorphic 
modification which is an isomorphism over 
$f^{-1}(U)$ and if $\widetilde {\mathcal L}
=h^*\mathcal L$, 
$\widetilde {\mathcal N}
=h^*\mathcal N$, $\widetilde C=h^*C$, and $\widetilde D$ is the strict 
transform of $D$, 
then we can easily check that 
$h_*\mathcal O_{\widetilde X}(K_{\widetilde X/Y}+
\widetilde D+\widetilde {\mathcal L}^{(i)})$ 
is contained in $\mathcal O_X(K_{X/Y}+D+\mathcal L^{(i)})$. 
By construction, 
$h_*\mathcal O_{\widetilde X}(K_{\widetilde X/Y}+
\widetilde D+\widetilde {\mathcal L}^{(i)})$ 
coincides with $\mathcal O_X(K_{X/Y}+D+\mathcal L^{(i)})$ on $f^{-1}(U)$. 
When we prove 
the weak positivity of $f_*\mathcal O_X(K_{X/Y}+D+\mathcal 
L^{(i)})$, by replacing $C$ with $C+F$, we may assume that $\mathcal L^N
=\mathcal O_X(B+C)$, that is, $F=0$ 
(see Lemma \ref{f-lem7.7}). Note that 
every irreducible component of $F$ is vertical with respect to 
$f:X\to Y$. 
By taking a cyclic cover $p:Z'\to X$ 
associated to $\mathcal L^{N}=\mathcal O_X(B+C)$, that is, 
$Z'$ is the normalization of $\Spec \bigoplus _{i=1}^{N-1}\mathcal L^{-i}$. 
Let $Z$ be a 
resolution of the cyclic cover $Z'$ 
and let $g:Z\to Y$ be the corresponding morphism. 
$$
\xymatrix{
Z\ar[r]^{q} \ar[rrd]_{g} & Z'\ar[r]^{p} &X\ar[d]^{f} \\
  && Y
} 
$$
It is well-known that $Z'$ has only quotient singularities 
and 
$$
p_*\mathcal O_{Z'}(K_{Z'})
\simeq \bigoplus _{i=0}^{N-1}\mathcal O_X(K_X+\mathcal L^{(i)}). 
$$
Let $D^{\dag}$ be the union of the strict transform of $p^*D$ and 
the exceptional divisor of $q:Z\to Z'$. 
Then $$
q_*\mathcal O_Z(K_{Z}+D^\dag)\simeq \mathcal O_{Z'}(K_{Z'}+p^*D). 
$$
Note that $(Z', p^*D)$ is log canonical. 
Of course, we may assume that $D^\dag$ is a simple normal crossing 
divisor. We obtain 
$$
g_*\mathcal O_Z(K_{Z/Y}+ D^\dag)\simeq \bigoplus_{i=0}^{N-1}f_*\mathcal 
O_X(K_{X/Y}+D+\mathcal L^{(i)}).  
$$ 
Therefore, by Corollary \ref{f-cor7.11}, $f_*\mathcal O_X(K_{X/Y}+D+\mathcal 
L^{(i)})$ is weakly positive for every $i$. 
\end{proof}

Before we start the proof of Theorem \ref{f-thm1.1}, we prepare 
a very important lemma. 

\begin{lem}[{cf.~\cite[Lemma 4.19]{campana}}]\label{f-lem8.2}
Let $f:X\to Y$ be a surjective morphism from a compact K\"ahler manifold 
$X$ to a smooth projective variety $Y$. 
Let $\Delta$ be a boundary $\mathbb Q$-divisor 
on $X$ such that $\Supp \Delta$ is a simple normal 
crossing divisor. 
Let $l$ be a positive integer such that $l(K_{X/Y}+\Delta)$ is Cartier. 
Let $A'$ be an ample Cartier divisor on $Y$. 
We put $A=f^*A'$. 
Assume that 
$$
\widehat S^N(f_*\mathcal O_X(l(K_{X/Y}+\Delta)+lA))
$$ 
is generated by global sections on some nonempty Zariski open set of $Y$. 
Then $$f_*\mathcal O_X(l(K_{X/Y}+\Delta)+(l-1)A)$$ is weakly positive. 
\end{lem}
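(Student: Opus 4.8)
The plan is to realize $f_*\mathcal O_X(l(K_{X/Y}+\Delta)+(l-1)A)$ as a direct summand of the pushforward of the relative dualizing sheaf of a cyclic cover of $X$, and then to quote the fundamental weak positivity theorem in the packaged form of Lemma \ref{f-lem8.1}. The essential preliminary observation is that, since $X$ and $Y$ are smooth, $K_{X/Y}=K_X-f^*K_Y$ is already a genuine line bundle; the only source of fractionality is the boundary $\Delta$, whose denominators divide $l$. Writing $\mathcal M=\mathcal O_X(l(K_{X/Y}+\Delta)+lA)$ we have $\mathcal F=f_*\mathcal M$, and by the projection formula (using $A=f^*A'$) the target sheaf is $\mathcal F\otimes\mathcal O_Y(-A')$; so the content of the lemma is that one copy of the ample twist is to be \emph{gained}.

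First I would convert the hypothesis into positivity on $X$. Using the evaluation $f^*f_*\mathcal M\to\mathcal M$ together with the multiplication map $S^N\mathcal F\to f_*(\mathcal M^{\otimes N})$, the global generation of $\widehat S^N(\mathcal F)$ over a nonempty open $U\subseteq Y$ yields sections of $\mathcal M^{\otimes N}$ that generate it over $f^{-1}(U)$ away from a divisorial base locus. Choosing a general member $B$ of the resulting subsystem and, if necessary, passing to a bimeromorphic modification which is an isomorphism over $f^{-1}(U)$ (Szab\'o's resolution lemma, Remark \ref{f-rem2.5}, and Bertini), I may assume $B$ is reduced, horizontal over $Y$, and that $\Supp(B+\Delta)$ is simple normal crossing; by Lemma \ref{f-lem7.7} such a modification is harmless for the weak positivity to be proved. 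This produces data $\mathcal L^{\nu}=\mathcal N+C$ with a power of $\mathcal N$ generated by global sections over $f^{-1}(U)$, as required to feed Lemma \ref{f-lem8.1}.

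Next I would run the covering construction: the degree-$\nu$ cyclic cover $p\colon Z'\to X$ attached to $B$ satisfies $p_*\mathcal O_{Z'}(K_{Z'})\simeq\bigoplus_i\mathcal O_X(K_X+\mathcal L^{(i)})$, and after resolving and pushing forward to $Y$ the torsion-free part of $g_*\mathcal O_Z(K_{Z/Y}+D^{\dagger})$ splits as $\bigoplus_i f_*\mathcal O_X(K_{X/Y}+D+\mathcal L^{(i)})$, each summand weakly positive by Corollary \ref{f-cor7.11}. One arranges the bookkeeping so that the round-downs $\lfloor iC/\nu\rfloor$ reproduce exactly the fractional boundary $\{l\Delta\}$ and so that, for the appropriate index $i$, the corresponding summand is precisely $f_*\mathcal O_X(l(K_{X/Y}+\Delta)+(l-1)A)$; weak positivity of that summand is the assertion of the lemma.

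The hard part is this last matching, which is exactly the subtle point the paper sets out to clarify. Since the covering naturally produces sheaves built from $\mathcal M=\mathcal O_X(l(K_{X/Y}+\Delta)+lA)$, extracting the summand carrying only $(l-1)A$ forces the round-down to absorb one copy of $A=f^*A'$; but naively deleting the pullback-ample part of the defining divisor destroys the generation of $\mathcal N$ over $f^{-1}(U)$ that Lemma \ref{f-lem8.1} requires. Making this trade legitimately---choosing the cover, the index $i$, and the boundary data so that the relative canonical of the cover supplies exactly the missing positivity while $\mathcal N$ retains a globally generated power over $f^{-1}(U)$, and simultaneously verifying the simple normal crossing and log canonical conditions after the modification---is where the real content lies. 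I expect this to be the main obstacle; it is precisely Viehweg's ``mysterious covering'' gain of one ample factor.
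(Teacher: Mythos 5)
Your outline coincides with the paper's strategy (reduce to Lemma \ref{f-lem8.1} via a cyclic-cover construction), but your proposal stops exactly where the proof begins: you declare the decisive matching to be ``the main obstacle'' and leave it unresolved, and a proof attempt that defers its only nontrivial step is a plan, not a proof. Concretely, the missing content is the choice of the auxiliary divisor to which Lemma \ref{f-lem8.1} is applied. After perturbing $\Delta$ (decreasing coefficients so that the relative base locus contains no component of $l\Delta$) and blowing up so that the image $\mathcal M$ of $f^*f_*\mathcal O_X(l(K_{X/Y}+\Delta)+lA)\to \mathcal O_X(l(K_{X/Y}+\Delta)+lA)$ is a line bundle with $l(K_{X/Y}+\Delta)+lA=\mathcal M+E$, one takes an effective divisor $S$ with $\codim _Y f(S)\geq 2$ such that $\mathcal M^{N}\otimes \mathcal O_X(NlS)$ is generated by global sections over $f^{-1}(U)$; the divisor $S$ with codimension-two image is forced on you because $\widehat S^N$ is an honest symmetric power only off a codimension-two subset of $Y$, and your ``divisorial base locus'' does not capture this. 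The decisive choice is then
$$
L=K_{X/Y}+\Delta^{=1}+l\{\Delta\}+A+S,
$$
which satisfies $lL=\mathcal M+lS+E+(l-1)l\{\Delta\}$, so that Lemma \ref{f-lem8.1} applies with $N=l$, $\mathcal N=\mathcal M\otimes \mathcal O_X(lS)$, $C=E+(l-1)l\{\Delta\}$, $D=\Delta^{=1}$, and at the top index $i=l-1$ it yields weak positivity of $f_*\mathcal O_X(K_{X/Y}+\Delta^{=1}+L^{(l-1)})$. The elementary but essential identity $K_{X/Y}+\Delta^{=1}+(l-1)L-\lfloor (l-1)^2\{\Delta\}\rfloor=l(K_{X/Y}+\Delta)+(l-1)(A+S)$ (using that $l\{\Delta\}$ is integral with coefficients less than $l$), together with the fact that the inclusion of the corresponding direct images is a generic isomorphism (because $E$ is the relative base locus and $f(S)\subsetneq Y$), gives weak positivity of $f_*\mathcal O_X(l(K_{X/Y}+\Delta)+(l-1)(A+S))$ by Lemma \ref{f-lem7.7}; finally $(l-1)S$ is discarded by passing to the double dual (Remark \ref{f-rem7.4}), which is legitimate precisely because $\codim _Yf(S)\geq 2$.

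Moreover, the mechanism you conjectured for the gain of one ample factor is wrong, which is why you perceived an obstacle where there is none. The discount is not obtained by ``forcing the round-down to absorb one copy of $A$,'' and nothing ample is ever deleted from $\mathcal N$: the sheaf $\mathcal N=\mathcal M\otimes \mathcal O_X(lS)$ retains the full $lA$ sitting inside $\mathcal M$, so the generation hypothesis of Lemma \ref{f-lem8.1} is never in tension with the loss of one $A$. Since $A=f^*A'$ is a pullback, it enters neither the branch data $C$ nor any round-down; the round-downs only manage $E$ and the fractional part $\{\Delta\}$ (whence the need to first perturb $\Delta$ so that $E$ and $l\Delta$ share no components --- a hypothesis of Lemma \ref{f-lem8.1} that your modification step does not arrange). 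The gain is instead automatic from the index bound $i\leq N-1$: the output of Lemma \ref{f-lem8.1} is built from $K_{X/Y}+\Delta^{=1}+(l-1)L$, i.e., the cyclic cover's relative canonical supplies the $l$-th copy of $K_{X/Y}+\Delta$ for free, and that $l$-th copy of $L$ is precisely the only place where the $l$-th copy of $A$ would have appeared. Once $L$ is chosen as above, the ``trade'' you worried about never has to be made, and the bookkeeping is a short computation rather than the crux.
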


\begin{proof}
We consider 
$$
\mathcal M:=\mathrm{Im} (f^*f_*\mathcal O_X(l(K_{X/Y}+\Delta)+lA)
\to \mathcal O_X(l(K_{X/Y}+\Delta)+lA)). 
$$
We may assume that 
the relative base locus of $l(K_{X/Y}+\Delta)+lA$ does not contain any 
component of $l\Delta$ by decreasing the relevant coefficients of $\Delta$. 
Furthermore, if necessary,  by taking blow-ups of $X$ and decreasing 
the relevant coefficients of $\Delta$, 
we may assume that $\mathcal M$ is a line bundle, 
$l(K_{X/Y}+\Delta)+lA=\mathcal M+E$, where 
$E$ is an effective divisor on $X$, $E$ and $l\Delta$ 
have no common components, and $\Supp (E+\Delta)$ is a simple 
normal crossing divisor on $X$. 
Let $$f:X\overset{\psi}{\longrightarrow} V\longrightarrow Y$$
be the Stein factorization. 
Then 
$$
\mathcal M=\mathrm{Im} (\psi^*\psi_*\mathcal O_X(l(K_{X/Y}+\Delta)+lA)
\to \mathcal O_X(l(K_{X/Y}+\Delta)+lA)). 
$$ 
If we take a Zariski open set $U'$ of $Y$ such that 
$\mathrm{codim}_Y(Y\setminus U')\geq 2$ suitably, then 
$\psi_*\mathcal O_X(l(K_{X/Y}+\Delta)+lA)$ is locally 
free and $\psi_*\mathcal O_X(l(K_{X/Y}+\Delta)+lA)\simeq 
\psi_*\mathcal M$ over $U'$ by construction. 
Therefore, $f_*\mathcal O_X(l(K_{X/Y}+\Delta)+lA)\simeq 
f_*\mathcal M$ on $U'$. 
Since 
$$
\widehat S^N(f_*\mathcal O_X(l(K_{X/Y}+\Delta)+lA))
$$ 
is generically generated by global sections, 
there is an effective divisor $S$ on $X$ such that 
$\mathrm{codim}_Yf(S)\geq 2$ and that 
$\mathcal M^{N}\otimes \mathcal O_X(NlS)$ is generated by 
global sections over $f^{-1}(U)\subset X$, where $U$ is a nonempty Zariski 
open set of $Y$. 
We put 
$$
L=K_{X/Y}+\Delta^{=1}+l\{\Delta\}+A+S 
$$ 
and 
$$
L^{(l-1)}=(l-1)L-\left\lfloor \frac{l-1}{l}(E+(l-1)l\{\Delta\})\right\rfloor. 
$$ 
We note that 
\begin{align*}
\Delta^{=1}=\lfloor \Delta\rfloor {\quad {\text{and}}\quad } \Delta=\Delta^{=1}+\{\Delta\}
\end{align*} 
because $\Delta$ is a boundary $\mathbb Q$-divisor. 
We also note that 
\begin{align*}
lL&=lK_{X/Y}+l\Delta+lA+lS+(l-1)l\{\Delta\}\\ 
&= \mathcal M+lS+E+(l-1)l\{\Delta\}. 
\end{align*}
By the usual covering 
argument, we obtain that 
$$
f_*\mathcal O_X(K_{X/Y}+\Delta^{=1}+L^{(l-1)})
$$ 
is weakly positive by Lemma \ref{f-lem8.1}. 
We can easily see that 
\begin{align*}
&K_{X/Y}+\Delta^{=1}+(l-1)L-\lfloor (l-1)^2 \{\Delta\}\rfloor
\\&=l(K_{X/Y}+\Delta^{=1})+l(l-1)\{\Delta\}-\lfloor (l^2-2l+1)\{\Delta\}\rfloor +(l-1)(A+S)
\\&=l(K_{X/Y}+\Delta)+(l-1)(A+S). 
\end{align*}
Therefore, we can also check that 
\begin{align*}
&f_*\mathcal O_X(K_{X/Y}+\Delta^{=1}+L^{(l-1)})\\ 
&\subset 
f_*\mathcal O_X(K_{X/Y}+\Delta^{=1}+(l-1)L-\lfloor (l-1)^2\{\Delta\}\rfloor)
\end{align*} 
and that they coincide over the generic point of $Y$ 
because $E$ is the relative base locus of $l(K_{X/Y}+\Delta)+lA$, 
$A=f^*A'$, $f(S)\subsetneq Y$, and 
$$
L^{(l-1)}=(l-1)L-\lfloor (l-1)^2\{\Delta\}\rfloor -\left\lfloor \frac{l-1}{l}E\right\rfloor . 
$$ 
Hence we obtain that 
\begin{align*}
&f_*\mathcal O_X(l(K_{X/Y}+\Delta)+(l-1)(A+S))\\ &=
f_*\mathcal O_X(K_{X/Y}+\Delta^{=1}+(l-1)L-\lfloor (l-1)^2\{\Delta\}\rfloor)
\end{align*} 
is 
weakly positive by Lemma \ref{f-lem7.7}. 
Therefore, 
$$(f_*\mathcal O_X(l(K_{X/Y}+\Delta)+(l-1)A))^{**}$$ 
is weakly positive because $\mathrm{codim}_Yf(S)\geq 2$. 
This means that 
$$f_*\mathcal O_X(l(K_{X/Y}+\Delta)+(l-1)A)$$
is weakly positive (see Remark \ref{f-rem7.4}).  
\end{proof}

Let us start the proof of Theorem \ref{f-thm1.1}. 

\begin{proof}[Proof of Theorem \ref{f-thm1.1}] 
We divide the proof into several steps. 
\begin{step}\label{step1}
Let $\widetilde X\to X$ be a resolution such that $\widetilde X$ is a compact K\"ahler 
manifold with $$
K_{\widetilde X}+\widetilde \Delta=\pi^*(K_X+\Delta)+E, 
$$ 
where $E$ and $\widetilde \Delta$ are effective and 
have no common components. 
By replacing $(X, \Delta)$ with $(\widetilde X, \widetilde \Delta)$, 
we may assume that $X$ is a compact K\"ahler manifold and that $\Supp \Delta$ 
is a simple normal crossing divisor. 
By replacing $mk$ with $k$, we may assume that 
$m=1$. 
\end{step}
\begin{step}\label{step2}
Let $H$ be an ample Cartier divisor on $Y$. 
We put 
$$
r=\min\{s>0 \, ; f_*\mathcal O_X(k(K_{X/Y}+\Delta))\otimes \mathcal O_Y((sk-1)H) 
\ \text{is weakly positive}\}. 
$$ 
By definition, we can find $\nu>0$ such that 
$$
\widehat S^{\nu}(f_*\mathcal O_X(k(K_{X/Y}+\Delta)))\otimes \mathcal O_Y((rk\nu-\nu)H)
\otimes \mathcal O_Y(\nu H)
$$ 
is generated by global sections over a nonempty Zariski open set. 
By Lemma \ref{f-lem8.2}, 
$$
f_*\mathcal O_X(k(K_{X/Y}+\Delta)\otimes \mathcal O_Y((rk-r)H)
$$ 
is weakly positive. 
The choice of $r$ allows this only if $(r-1)k-1<rk-r$, equivalently, 
$r\leq k$. 
Hence we obtained the weak positivity of $$f_*\mathcal O_Y(k(K_{X/Y}+\Delta))\otimes 
\mathcal O_Y((k^2-k)H). $$ 
\end{step}
\begin{step}\label{step3} 
Let $d$ be an arbitrary positive integer. 
By Lemma \ref{f-lem8.3}, 
we can take a finite flat morphism $g:Y'\to Y$ from a smooth projective 
variety $Y'$ such that 
$g^*H\sim dH'$ and that 
$X'=X\times _Y Y'$ is a compact K\"ahler manifold. 
We put $\tau:X'\to X$ and $\Delta'=\tau^*\Delta$. 
Thus, by Lemma \ref{f-lem8.3}, we have    
$$ 
f'_*\mathcal O_{X'}(k(K_{X'/Y'}+\Delta'))\simeq  g^*f_*\mathcal O_X(k(K_{X/Y}+\Delta)), 
$$ 
where 
$f':X'\to Y'$. 
We may further assume that $\Delta'$ is a boundary 
$\mathbb Q$-divisor such that 
$\Supp \Delta'$ is a simple normal crossing divisor 
by Lemma \ref{f-lem8.3}. 
Then we obtain that 
$$
g^*f_*\mathcal O_X(k(K_{X/Y}+\Delta))\otimes \mathcal O_{Y'}((k^2-k)H') 
$$ 
is weakly positive. 
This is because 
$$
f'_*\mathcal O_{X'}(k(K_{X'/Y'}+\Delta'))\otimes \mathcal O_{Y'}((k^2-k)H')
$$
is weakly 
positive by applying the above result (see Step \ref{step2}) to $f':(X', \Delta')\to Y'$. 
If $\alpha$ is a positive integer, then we choose 
$d=2\alpha (k^2-k)+1$. 
Let $\beta$ be a sufficiently large positive integer. 
Then we have that 
\begin{align*}
&\widehat S^{2\alpha\beta}(g^*f_*\mathcal O_X(k(K_{X/Y}+\Delta))\otimes \mathcal O_{Y'}((k^2-k)H'))\otimes 
\mathcal O_{Y'}(\beta H') \\&= 
g^*\widehat S^{2\alpha\beta}(f_*\mathcal O_X(k(K_{X/Y}+\Delta)))\otimes 
g^*\mathcal O_Y(\beta H)
\end{align*} 
is generated by global sections over a nonempty Zariski open set. 
We may further assume that 
$g_*\mathcal O_{Y'}\otimes \mathcal O_Y(\beta H)$ is 
generated by global sections. 
Over the Zariski open set $\widehat Y$ of $Y$ where 
$$
\widehat S^{2\alpha\beta}(f_*\mathcal O_X(k(K_{X/Y}+\Delta)))
$$ 
is locally free, we have a surjection
\begin{align*}
g_*g^*\widehat S^{2\alpha\beta}(f_*\mathcal O_X(k(K_{X/Y}+\Delta)))
\otimes \mathcal O_Y(\beta H) \\ \to 
\widehat S^{2\alpha\beta}(f_*\mathcal O_X(k(K_{X/Y}+\Delta))) \otimes \mathcal O_Y(\beta H). 
\end{align*}
Therefore, we have a homomorphism 
$$
\bigoplus_{\mathrm{finite}}(\mathcal O_Y(\beta H)\otimes g_*\mathcal O_{Y'})\to 
\widehat S^{2\alpha\beta}(f_*\mathcal O_X(k(K_{X/Y}+\Delta)))\otimes 
\mathcal O_Y(2\beta H)
$$
which is surjective over a nonempty Zariski open set. 
Note that $g_*\mathcal O_{Y'}\otimes \mathcal O_Y(\beta H)$ 
is generated by global sections. 
Therefore, $$\widehat S^{2\alpha\beta}(f_*\mathcal O_X(k(K_{X/Y}+\Delta)))\otimes 
\mathcal O_Y(2\beta H)$$ is generated by global sections over 
a nonempty Zariski open set. 
\end{step}
This means that $f_*\mathcal O_X(k(K_{X/Y}+\Delta))$ is weakly positive. 
\end{proof}

In the above proof of Theorem \ref{f-thm1.1}, we have already used the following 
lemma. It is a variant of Kawamata's cover (see \cite[Theorem 17]{kawamata1}). 
The description of Kawamata's covering trick in \cite[3.19.~Lemma]{ev} 
is very useful for our purpose. 
See also 
\cite[5.3.~Kawamata's covering]{ak} 
and \cite[Lemma 2.5]{viehweg-book}. 

\begin{lem}\label{f-lem8.3}
Let $f:X\to Y$ be a surjective morphism from a compact K\"ahler manifold 
$X$ to a smooth projective variety $Y$ and let $H$ be 
a Cartier divisor on $Y$. Let $d$ be an arbitrary positive integer. 
Then we can take a finite flat morphism $g:Y'\to Y$ from a smooth 
projective variety $Y'$ and a Cartier divisor $H'$ on $Y'$ such that 
$g^*H\sim dH'$ and that 
$X'=X\times _Y Y'$ is a compact K\"ahler manifold with 
$\omega_{X'/Y'}=\tau^*\omega_{X/Y}$, where $\tau:X'\to X$. 
Let $S$ be a simple normal crossing divisor on $X$. 
Then we can choose $g:Y'\to Y$ such that 
$\tau^*S$ is a simple normal crossing divisor on $X'$. 

Furthermore, let $D$ be a Cartier divisor on $X$. 
We put $f':X'\to Y'$. 
Then there is a natural isomorphism 
$$
f'_*\mathcal O_{X'}(nK_{X'/Y'}+\tau^*D)\simeq  g^*f_*\mathcal O_X(nK_{X/Y}+D)
$$ for every integer $n$. 
\end{lem}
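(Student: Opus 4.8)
Lemma \ref{f-lem8.3} is Kawamata's covering construction adapted to the Kähler setting, so the plan is to follow the description in \cite[3.19.~Lemma]{ev} and verify that each step is compatible with $X$ being only in Fujiki's class $\mathcal C$ rather than projective. The statement has three parts: the existence of the covering $g:Y'\to Y$ with $g^*H\sim dH'$ and $X'$ Kähler; the ability to arrange $\tau^*S$ to be simple normal crossing; and the base-change isomorphism for pushforwards of relative pluricanonical sheaves. I will treat them in that order.

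First I would construct the cover on the base $Y$. Since $Y$ is smooth projective, standard Kawamata covering (take auxiliary general members of $|H|$, or of a large multiple, and build a cyclic/Kummer cover after adding enough general hypersurfaces through a finite stratification) produces a finite flat morphism $g:Y'\to Y$ from a smooth projective $Y'$ together with a Cartier divisor $H'$ with $g^*H\sim dH'$; this is exactly \cite[3.19.~Lemma]{ev}, and one may also invoke \cite[5.3.~Kawamata's covering]{ak}. The only new point is the Kähler claim for $X'=X\times_Y Y'$. Here I would note that $g$ is finite, hence $g$ and the projection $\tau:X'\to X$ are projective (in particular locally Kähler) morphisms, and the composition of a Kähler manifold with a (locally) projective morphism stays in class $\mathcal C$ (Remark \ref{f-rem2.12}); after resolving the singularities of the fibre product and using that $\tau$ is étale in codimension one over the smooth locus, one obtains a compact Kähler manifold $X'$ with $\omega_{X'/Y'}=\tau^*\omega_{X/Y}$ because the relative canonical sheaf pulls back under the flat base change $g$.

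For the simple normal crossing condition on $\tau^*S$, the point is that one has freedom in choosing the general hypersurfaces defining the cover: by choosing them general relative to the stratification of $S$ (and relative to the branch data) one forces the ramification of $\tau$ to meet the strata of $S$ transversally, so that $\tau^*S$ remains simple normal crossing on $X'$. This is the usual transversality-of-general-members argument, and I would simply cite \cite[Theorem 17]{kawamata1} and \cite[3.19.~Lemma]{ev} for it, together with Szabó's resolution lemma (Remark \ref{f-rem2.5}) to clean up if any further blow-ups are needed without disturbing the normal-crossing structure.

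The base-change isomorphism $f'_*\mathcal O_{X'}(nK_{X'/Y'}+\tau^*D)\simeq g^*f_*\mathcal O_X(nK_{X/Y}+D)$ is where the main subtlety lies, because I cannot appeal to the flat base-change theorem for relative dualizing sheaves in the analytic category (this is exactly the gap flagged in \ref{f-say2.15}). The plan is to avoid Theorem \ref{f-rem2.16} entirely and instead argue as in \cite[Lemma 3.2]{viehweg1} and \cite[(4.10) Base change theorem]{mori}: since $g$ is finite and flat, $g^*f_*\mathcal E=f'_*\tau^*\mathcal E$ holds for any coherent sheaf $\mathcal E$ that is compatible with the base change, and the identity $\omega_{X'/Y'}=\tau^*\omega_{X/Y}$ established above gives $\tau^*(nK_{X/Y}+D)=nK_{X'/Y'}+\tau^*D$ as line bundles. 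Thus the hard part is purely the commutation of $\tau^*$ with $f_*$ for the flat finite cover $g$, which holds because finite flat base change for higher direct images is available even analytically (it reduces to the affine statement $g^*$ being exact and the projection formula), so no appeal to the general Verdier flat base-change theorem is required. I would therefore conclude by assembling these pieces, emphasising that the finiteness and flatness of $g$ together with $\omega_{X'/Y'}=\tau^*\omega_{X/Y}$ are precisely what make the argument work without the algebraicity of $X$.
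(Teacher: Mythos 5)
Your proposal has a genuine gap at its central point. The lemma asserts that the fibre product $X'=X\times _Y Y'$ \emph{itself} is a compact K\"ahler manifold, and the paper's argument hinges on arranging this by genericity: the auxiliary divisors $D_1$, $D_2$ are chosen general so that $f^*D_1$ and $f^*D_2$ are smooth, $\Supp (f^*D_1+f^*D_2)$ is simple normal crossing, and the ramification locus $\Sigma$ of $g$ pulls back to a simple normal crossing divisor on $X$ whose components are smooth and not contained in $S$. A local computation then shows that $X\times _Y Y'$ is \emph{already smooth}; no resolution is needed --- or allowed. Your plan to ``resolve the singularities of the fibre product'' breaks the statement: after a resolution $X''\to X\times _Y Y'$, the flat base change identity $g^*f_*\mathcal E\simeq f'_*\tau^*\mathcal E$ no longer applies to $X''$, the relative canonical sheaf changes, and the asserted isomorphism $f'_*\mathcal O_{X'}(nK_{X'/Y'}+\tau^*D)\simeq g^*f_*\mathcal O_X(nK_{X/Y}+D)$ would not follow. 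The transversality you invoke for $S$ is the right idea, but you must push it one step further and observe that it also forces smoothness of the fibre product.

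Second, your derivation of $\omega_{X'/Y'}=\tau^*\omega_{X/Y}$ is both circular and incorrect as stated. In your first paragraph you obtain it ``because the relative canonical sheaf pulls back under the flat base change $g$'' --- but that is exactly the analytic flat base change theorem for relative dualizing sheaves (Theorem \ref{f-rem2.16}) whose availability in the analytic category is the very gap flagged in \ref{f-say2.15}, and which you yourself correctly say in your final paragraph must be avoided since $X$ is only K\"ahler. Moreover, the claim that $\tau$ is \'etale in codimension one is false: $\tau$ is ramified precisely along $f'^*$ of the ramification divisor of $g$, which is a divisor. The correct replacement, used in the paper, is elementary precisely because both $X$ and $X'$ are manifolds: by the Hurwitz formula $K_{X'}=\tau^*K_X+R_\tau$ with $R_\tau=f'^*R_g$, so $K_{X'/Y'}=\tau^*K_{X/Y}$ directly, with no duality theory at all. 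Once this identity is in place, your final paragraph (finite flat base change for coherent sheaves, which is unproblematic analytically, plus the projection formula) is correct and agrees with the paper.
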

\begin{proof}
We take general very ample Cartier divisors $D_1$ and $D_2$ with the following 
properties. 
\begin{itemize}
\item[(i)] $H\sim D_1-D_2$, 
\item[(ii)] $D_1$, $D_2$, $f^*D_1$, and $f^*D_2$ are smooth, 
\item[(iii)] $D_1$ and $D_2$ have no common components, and 
\item[(iv)] $\Supp (D_1+D_2)$ and $\Supp (f^*D_1+f^*D_2)$ 
are simple normal crossing divisors. 
\end{itemize}
We take a finite flat cover due to Kawamata 
with respect to $Y$ and $D_1+D_2$ (see \cite[Theorem 17]{kawamata1}), 
we obtain $g:Y'\to Y$ and $H'$ such that $g^*H\sim dH'$. 
By the construction of the above Kawamata cover $g:Y'\to Y$, 
we may assume that the ramification locus $\Sigma$ of $g$ in $Y$ is 
a general simple normal crossing divisor. This means that 
$f^*P$ is a smooth divisor for any irreducible component 
$P$ of 
$\Sigma$ and that $f^*\Sigma$ is a simple normal 
crossing divisor on $X$. We may further assume that 
$f^*P\not\subset S$ for any irreducible component $P$ of 
$\Sigma$ and that $f^*\Sigma+S$ is a simple normal crossing divisor 
on $X$ since $D_1$ and $D_2$ are general. 
In this situation, we can check that 
$X'=X\times _YY'$ is a compact K\"ahler manifold (see Remark \ref{f-rem2.12}) 
and that $\tau^*S$ is a simple 
normal crossing divisor on $X'$. 
$$
\xymatrix{
 X' \ar[r]^{\tau} \ar[d]_{f'} & X\ar[d]^{f} \\
  Y' \ar[r]_{g} & Y
} 
$$
By construction, we can also easily check that 
$\omega_{X'/Y'}=\tau^*\omega_{X/Y}$ by the Hurwitz formula. 
Therefore, we have 
$$
\mathcal O_{X'}(nK_{X'/Y'}+\tau^*D)\simeq \tau^*\mathcal O_X(nK_{X/Y}+D)
$$ 
for every integer $n$. 
Thus, we obtain 
\begin{align*}
f'_*\mathcal O_{X'}(nK_{X'/Y'}+\tau^*D) &\simeq f'_*\tau^*\mathcal O_X(nK_{X/Y}+D)
\\ &\simeq g^*f_*\mathcal O_X(nK_{X/Y}+D)
\end{align*} 
for every integer $n$. 
\end{proof}

As a special case of Theorem \ref{f-thm1.1}, we have: 

\begin{cor}[{cf.~\cite[Theorem III]{viehweg1}}]\label{f-cor8.4} 
Let $f:X\to Y$ be a surjective morphism such that 
$X$ is a complex manifold in Fujiki's class $\mathcal C$ and 
$Y$ is a smooth projective variety. 
Then $f_*\mathcal O_X(kK_{X/Y})$ is weakly positive for every $k>0$. 
\end{cor}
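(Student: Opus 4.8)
The plan is to obtain Corollary \ref{f-cor8.4} as an immediate specialization of the twisted weak positivity theorem, Theorem \ref{f-thm1.1}, by taking the boundary divisor to be trivial. First I would set $\Delta=0$ in Theorem \ref{f-thm1.1}. Since $X$ is a complex manifold in Fujiki's class $\mathcal C$, it is smooth, so the pair $(X,0)$ is log canonical; indeed it is terminal, since all discrepancies over a smooth variety are nonnegative. Moreover, smoothness of $X$ guarantees that $K_X$ is already Cartier, so the Cartier index appearing in the hypothesis of Theorem \ref{f-thm1.1} — the integer with $k(K_X+\Delta)$ Cartier — may be taken to be $1$.

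With these choices the hypotheses of Theorem \ref{f-thm1.1} are met: the morphism $f:X\to Y$ is surjective with connected fibers onto the smooth projective variety $Y$, and $K_X=1\cdot(K_X+\Delta)$ is Cartier. The conclusion of Theorem \ref{f-thm1.1} then reads that $f_*\mathcal O_X(mK_{X/Y})$ is weakly positive for every positive integer $m$, since $K_{X/Y}+\Delta=K_{X/Y}$. Renaming the arbitrary integer $m$ as $k$, this is exactly the assertion of Corollary \ref{f-cor8.4}.

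I do not expect any genuine obstacle here: all of the analytic and Hodge-theoretic content — the semipositivity theorem (Theorem \ref{f-thm1.5}), the covering constructions of Lemma \ref{f-lem8.1}, Lemma \ref{f-lem8.2}, and Lemma \ref{f-lem8.3}, and the final descent argument — is already packaged inside the proof of Theorem \ref{f-thm1.1}. The only points to verify are the two elementary reductions above, namely that $(X,0)$ is log canonical and that $K_X$ is Cartier, both of which follow instantly from the hypothesis that $X$ is a complex manifold. Thus the corollary is a clean, formal consequence, and its proof amounts to a single invocation of Theorem \ref{f-thm1.1}.
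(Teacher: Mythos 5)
Your proposal is correct and coincides with the paper's own treatment: the paper offers no separate argument, presenting Corollary \ref{f-cor8.4} precisely as the specialization of Theorem \ref{f-thm1.1} with $\Delta=0$ and Cartier index $1$, exactly as you do. One cosmetic remark: nonnegativity of all discrepancies gives canonical rather than terminal singularities (smooth varieties are in fact terminal, with exceptional discrepancies strictly positive), but since only log canonicity of $(X,0)$ is needed, this does not affect the argument.
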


As is well-known, 
Corollary \ref{f-cor8.4} is a very famous fundamental result by Viehweg 
when $X$ is projective. 

\begin{rem} In Corollary \ref{f-cor8.4}, we did not check the weak positivity 
of $f_*\mathcal O_X(kK_{X/Y})$ in the sense of Definition \ref{f-def7.3}. 
Corollary \ref{f-cor8.4} says that $f_*\mathcal O_X(kK_{X/Y})$ is 
weakly positive in the sense of Definition \ref{f-def7.2}. 
\end{rem}

By Theorem \ref{f-thm1.1}, we can recover Campana's twisted 
weak positivity (see \cite[Theorem 4.13]{campana}). 

\begin{cor}
Let $f:X\to Y$ be a surjective morphism from a complex manifold 
$X$ in Fujiki's class $\mathcal C$ to a smooth projective 
variety $Y$. 
Let $D$ be a divisor on $X$. 
We put $D=D^h+D^v$ where $D^h$ $($resp.~$D^v$$)$ 
is the horizontal $($resp.~vertical$)$ part of $D$ with respect to $f:X\to Y$. 
Assume that $\Supp D^h$ is a simple normal crossing divisor 
and the coefficients of $D^h$ is less than or equal to $m$, 
where $m$ is a positive integer. 
Then $f_*\mathcal O_X(mK_{X/Y}+D)$ is weakly positive. 
\end{cor}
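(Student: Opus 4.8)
The plan is to produce $f_*\mathcal O_X(mK_{X/Y}+D)$ from the output of Theorem \ref{f-thm1.1} applied to a suitable boundary, and then to correct by the vertical part using Lemma \ref{f-lem7.7}. Concretely, set $\Delta=\tfrac{1}{m}D^h$. Since $\Supp D^h$ is simple normal crossing and every coefficient of the effective integral divisor $D^h$ lies in $[0,m]$, the divisor $\Delta$ is a boundary $\mathbb Q$-divisor with simple normal crossing support, so $(X,\Delta)$ is log canonical. As $X$ is a manifold, both $K_X$ and $D^h$ are Cartier, hence $m(K_X+\Delta)=mK_X+D^h$ is Cartier and we may take $k=m$ in Theorem \ref{f-thm1.1}. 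Applying that theorem (taking the positive integer in its statement to be $1$) gives that
$$
f_*\mathcal O_X(m(K_{X/Y}+\Delta))=f_*\mathcal O_X(mK_{X/Y}+D^h)
$$
is weakly positive.

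Next I would bring in $D^v$. Because $D^v$ is vertical, the open set $U=Y\setminus f(\Supp D^v)$ is nonempty and $D^v$ vanishes on $f^{-1}(U)$. As $D^v\ge 0$, the inclusion $\mathcal O_X(mK_{X/Y}+D^h)\hookrightarrow \mathcal O_X(mK_{X/Y}+D)$ is an isomorphism over $f^{-1}(U)$, so pushing forward yields a map
$$
f_*\mathcal O_X(mK_{X/Y}+D^h)\longrightarrow f_*\mathcal O_X(mK_{X/Y}+D)
$$
of torsion-free sheaves that is an isomorphism over $U$, hence generically surjective. Lemma \ref{f-lem7.7} then transfers weak positivity from the source to the target, which is the assertion of the corollary.

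The step I expect to be the main obstacle is that Theorem \ref{f-thm1.1} is stated for morphisms with connected fibers, whereas the corollary as written does not assume this. If connected fibers are granted (as in Corollary \ref{f-cor8.4}), the two displayed steps finish the proof at once. In general one must first pass to the Stein factorization $f=g\circ h$, with $h$ having connected fibers and $g$ finite, run the argument for $h$, and then descend along $g$; the delicate points are that the Stein factor need not be smooth, so it has to be replaced by a resolution while keeping the boundary and simple normal crossing hypotheses intact under the base change, and that one must control the behavior of weak positivity under the finite morphism $g$. Everything else is formal once Theorem \ref{f-thm1.1} and Lemma \ref{f-lem7.7} are in hand.
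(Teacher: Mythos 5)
Your proposal is correct and is essentially the paper's own argument: both proofs reduce to Theorem \ref{f-thm1.1} applied to a horizontal lc boundary and then transfer weak positivity through Lemma \ref{f-lem7.7}, the only difference being that the paper sets $\Delta=\tfrac{1}{m}D$ and must pass through a log resolution (since $\Supp D$, unlike $\Supp D^h$, need not be simple normal crossing) before discarding the vertical part of $\Delta'$ upstairs, whereas you apply Theorem \ref{f-thm1.1} directly to the already-lc pair $(X,\tfrac{1}{m}D^h)$ and re-add the (implicitly effective) $D^v$ afterwards via the generically isomorphic pushforward inclusion. Your caveat about connected fibers is well observed but is not a defect relative to the paper: its proof of the corollary invokes Theorem \ref{f-thm1.1} without comment, so that hypothesis is implicitly in force there as well (consistent with Campana's fibration setting), and no Stein factorization is carried out in the paper either.
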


\begin{proof}
We put $\Delta=\frac{1}{m}D$. 
Then $(X, \Delta)$ is log canonical over the generic point of $Y$. 
We take a resolution $g:X'\to X$. 
Then 
$$
K_{X'}+\Delta'=g^*(K_X+\Delta)+E
$$
where $\Delta'$ and $E$ are effective and have no common components such that 
$\Supp (\Delta'+E)$ is a simple normal crossing divisor on $X'$. 
Let $\widetilde \Delta$ be the 
horizontal part of $\Delta'$. 
Then $(X', \widetilde \Delta)$ is log canonical. 
By Lemma \ref{f-lem7.7}, we can replace $(X, \Delta)$ with 
$(X', \widetilde \Delta)$. 
By Theorem \ref{f-thm1.1}, we obtain that 
$f_*\mathcal O_X(m(K_{X/Y}+\Delta))=f_*\mathcal O_X(mK_{X/Y}+D)$ 
is weakly positive. 
\end{proof}

\section{Addition formula}\label{f-sec9}

Theorem \ref{f-thm1.7} is an easy application of Theorem \ref{f-thm1.1}. 
It is contained in \cite{campana}. See also \cite{lu} and \cite{nakayama}. 

Proposition \ref{f-prop9.1} is a slight reformulation of 
\cite[Corollary 7.1]{viehweg1}. 

\begin{prop}[{cf.~\cite[Corollary 7.1]{viehweg1}}]\label{f-prop9.1}
Let $f:V\to W$ be an equidimensional surjective morphism 
from a normal projective variety $V$ to a smooth projective 
variety $W$ with connected fibers. 
Let $(V, \Delta)$ be a log canonical pair. 
Let $H$ be an ample Cartier divisor on $W$. 
Then there are some positive integers $a$ and $l$ such that 
$a(K_V+\Delta)$ is Cartier and 
the linear system $\Lambda$ associated 
to $$
H^0(V, \mathcal O_V(al(K_{V/W}+\Delta))\otimes f^*\mathcal O_W(lH))
$$ 
defines a rational map $\Phi:V\dashrightarrow X$ with 
$$\dim X=\kappa (V_w, K_{V_w}+\Delta|_{V_w})+\dim W,$$ where 
$V_w$ is a sufficiently general fiber of $f$. 
Moreover, there is a rational map $\pi:X\dashrightarrow W$ such that 
$f=\pi\circ \Phi$. 
$$
\xymatrix{
V\ar@{-->}[r]^{\Phi} \ar[d]_{f} & X\ar@{-->}[ld]^{\pi} \\
   W & 
} 
$$
\end{prop}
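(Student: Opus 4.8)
The plan is to identify $\Phi$ with a model of the relative Iitaka fibration of $K_{V/W}+\Delta$, using the twisted weak positivity of Theorem~\ref{f-thm1.1} to guarantee that the explicit linear system is large enough, and using the ample twist $f^*H$ to recover the base direction. Throughout I assume, as the Cartier hypothesis forces, that $\Delta$ is a $\mathbb Q$-divisor. First I would fix a positive integer $a$ with $a(K_V+\Delta)$ Cartier, which is possible since $(V,\Delta)$ is log canonical, and write $D=a(K_{V/W}+\Delta)$; for a general fibre $V_w$ one has $D|_{V_w}=a(K_{V_w}+\Delta|_{V_w})$, because $f$ is equidimensional and $f^*K_W$ restricts trivially, so $\kappa:=\kappa(V_w,K_{V_w}+\Delta|_{V_w})$ is the relative Iitaka dimension. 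Since $V$ is projective, hence in Fujiki's class $\mathcal C$, and $W$ is smooth projective with $f$ of connected fibres, Theorem~\ref{f-thm1.1} applies and yields that $\mathcal F_m:=f_*\mathcal O_V(mD)$ is weakly positive for every $m>0$.

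Next I would fix a large and sufficiently divisible $l_1$ so that $|l_1D|$ restricts on the general fibre to the Iitaka fibration of $K_{V_w}+\Delta|_{V_w}$ (image of dimension $\kappa$) and so that a Veronese of the relative section ring is generated in degree one, making the multiplication map $\widehat S^{\beta}(\mathcal F_{l_1})\to\mathcal F_{\beta l_1}$ generically surjective. Feeding the ample $H$ into Definition~\ref{f-def7.2} for the weakly positive sheaf $\mathcal F_{l_1}$ produces a $\beta$ with $\widehat S^{\beta}(\mathcal F_{l_1})\otimes\mathcal O_W(\beta H)$ generically generated by global sections. Setting $l=\beta l_1$, tensoring by the globally generated bundle $\mathcal O_W((l-\beta)H)$, and composing with the generically surjective multiplication map shows that $\mathcal F_l\otimes\mathcal O_W(lH)$ is generically generated by global sections, where in addition $lH$ is very ample and $|lD|$ still cuts out the Iitaka fibration on $V_w$. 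By the projection formula $H^0(V,\mathcal O_V(lD)\otimes f^*\mathcal O_W(lH))\cong H^0(W,\mathcal F_l\otimes\mathcal O_W(lH))$, and this is the linear system $\Lambda$ defining $\Phi:V\dashrightarrow X$.

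The dimension count then splits into a fibre part and a base part. For the fibre part, generic global generation of $\mathcal F_l\otimes\mathcal O_W(lH)$ means the evaluation is surjective at a general $w$; by base change the fibre of $\mathcal F_l$ there is $H^0(V_w,\mathcal O_{V_w}(lD|_{V_w}))$, so $\Lambda$ restricts onto the complete linear system of $lD|_{V_w}$ and $\Phi|_{V_w}$ has image of dimension exactly $\kappa$. For the base part, since $lH$ is very ample and $\mathcal F_l$ has positive generic rank, the sections of $\mathcal F_l\otimes\mathcal O_W(lH)$ separate general points of $W$, so $\Phi$ separates the general fibres of $f$; this yields a dominant rational map $\pi:X\dashrightarrow W$ with $f=\pi\circ\Phi$ whose general fibre is $\Phi(V_w)$. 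Conversely, restricting to $V_w$ shows that $\Lambda|_{V_w}$ is a subsystem of the complete linear system of $lD|_{V_w}$, giving the upper bound $\dim X\le\kappa+\dim W$; combined with the two lower bounds this gives $\dim X=\kappa+\dim W$.

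The main obstacle is the passage from the weak positivity of Theorem~\ref{f-thm1.1} to enough effective sections in the one chosen degree $l$: one must simultaneously recover the full fibrewise Iitaka dimension $\kappa$, which forces $l$ into the stable range and uses the degree-one generation of a Veronese of the relative section ring, and separate the general fibres of $f$, which is exactly what the ample twist together with weak positivity delivers and what produces the factorization $\pi$. Without the positivity input the explicit system $\Lambda$ could define a map of strictly smaller image, so this step, rather than the essentially formal dimension bookkeeping, is the heart of the argument.
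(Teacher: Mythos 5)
Your overall architecture (Theorem \ref{f-thm1.1} to produce sections, an ample twist to recover the base direction, formal dimension bookkeeping at the end) matches the paper's, but there is a genuine gap at exactly the step you yourself call the heart of the argument. You pass from weak positivity of $\mathcal F_{l_1}=f_*\mathcal O_V(l_1a(K_{V/W}+\Delta))$ to generic global generation of $\mathcal F_l\otimes \mathcal O_W(lH)$ by asserting that the multiplication map $\widehat S^{\beta}(\mathcal F_{l_1})\to \mathcal F_{\beta l_1}$ is generically surjective, justified by ``degree-one generation of a Veronese of the relative section ring.'' At a general point $w$ this is precisely the statement that some Veronese of $\bigoplus _m H^0\bigl(V_w, \mathcal O_{V_w}(ma(K_{V_w}+\Delta|_{V_w}))\bigr)$ is generated in degree one, which implies finite generation of the log canonical ring of the log canonical pair $(V_w, \Delta|_{V_w})$. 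For klt fibers this would be available (via BCHM, already far beyond this paper's toolkit), but for lc pairs whose log canonical divisor is not big --- and the fibers here are arbitrary lc pairs with $\kappa\geq 0$ --- finite generation is an open problem, so this step cannot be justified. There is also a quantifier trap even granting surjectivity in some fixed degree: the $\beta$ of Definition \ref{f-def7.2} is produced only after $l_1$ is chosen, so you need surjectivity for an arbitrary $\beta$, i.e.\ genuinely degree-one generation, not surjectivity of a single multiplication map.

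The paper's proof is built precisely to avoid this. It never arranges for $\Lambda$ to restrict to the complete system on $V_w$: from generic generation of $\widehat S^{2b}(f_*\mathcal O_V(a(K_{V/W}+\Delta)))\otimes \mathcal O_W(bH)$ it uses only that the multiplication map into $f_*\mathcal O_V(2ba(K_{V/W}+\Delta))\otimes \mathcal O_W(bH)$ is \emph{nontrivial} (a product of nonzero sections on the irreducible general fiber is nonzero), which yields a single nonzero section and hence an inclusion $f^*\mathcal O_W(bH)\hookrightarrow \mathcal O_V(2ba(K_{V/W}+\Delta))\otimes f^*\mathcal O_W(2bH)$. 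With $bH$ very ample, this subsystem (a fixed divisor plus $f^*|bH|$) already separates the fibers of $f$ and produces $\pi$ with $f=\pi\circ \Phi$; the fiber dimension is then pinned down from both sides without any completeness of $\Lambda|_{V_w}$: the upper bound because $\Lambda|_{V_w}$ is a subsystem of $|2ba(K_{V_w}+\Delta|_{V_w})|$, and the lower bound by the easy addition formula applied to $\Phi|_{V_w}$ for $b$ sufficiently large, where the general fiber $F$ of $\Phi$ satisfies $\kappa(F, L|_F)=0$. If you replace your generic-surjectivity step by this ``one section suffices'' device, the rest of your bookkeeping goes through and the argument becomes essentially the paper's.
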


\begin{rem}\label{f-rem9.2}
When $\kappa (V_w, K_{V_w}+\Delta|_{V_w})=-\infty$, 
we claim nothing in Proposition \ref{f-prop9.1}. 
\end{rem}

\begin{proof}
We take a positive integer $a$ such that 
$a(K_V+\Delta)$ is Cartier and $f_*\mathcal O_V(a(K_{V/W}+\Delta))$ is nontrivial. 
By the twisted weak positivity theorem:~Theorem \ref{f-thm1.1}, there is some 
$b>0$ such 
that 
$$
\widehat{S}^{2b}(f_*\mathcal O_V(a(K_{V/W}+\Delta)))\otimes\mathcal O_W(b H)
$$ 
is generated by global sections on some nonempty Zariski open set. 
Since the 
natural map 
\begin{align*}
\widehat{S}^{2b}(f_*\mathcal O_V(a(K_{V/W}+\Delta)))\otimes \mathcal O_W(b H)
\\ \to f_*\mathcal O_V(2b a(K_{V/W}+\Delta))\otimes \mathcal O_W(b H)
\end{align*}
is nontrivial, we obtain an inclusion from 
$f^*\mathcal O_W(b H)$ into 
$\mathcal O_V(2b a(K_{V/W}+\Delta))\otimes f^*\mathcal O_W(2b H)$. 
Note that $f_*\mathcal O_V(2b a (K_{V/W}+\Delta))$ is reflexive since 
$f$ is equidimensional. 
Without loss of generality, we may assume that $b H$ is very ample by 
replacing $b$ with $b'b$ for some $b'\gg 0$. We put $l=2b$. 
If $b$ is sufficiently large, then the sections 
of $\mathcal O_V(2b a (K_{V/W}+\Delta))\otimes f^*\mathcal O_W(2b H)$ define 
a rational map $\Phi:V\dashrightarrow X$ such that 
$\mathbb C(X)$ is algebraically closed in $\mathbb C(V)$. 
Since $b H$ is very ample and there is 
an inclusion from  
$f^*\mathcal O_W(b H)$ into 
$\mathcal O_V(2b a(K_{V/W}+\Delta))\otimes f^*\mathcal O_W(2b H)$, 
we obtain a rational map $\pi:X\dashrightarrow W$ such that 
$f=\pi\circ \Phi$. 
The easy addition formula gives 
\begin{align*}
&\kappa (V_w, K_{V_w}+\Delta|_{V_w})\\&\leq \dim \Phi (V_w)+
\kappa(F, (\mathcal O_V(2b a(K_{V/W}+\Delta))\otimes f^*\mathcal O_W(2b 
H))|_F)\\&=\dim \Phi(V_w)
\end{align*} 
where $F$ is a sufficiently general fiber of $\Phi:V\dashrightarrow X$ 
(if necessary, we take an elimination of points of indeterminacy of $\Phi$). 
On the other hand, the restriction of the linear system $\Lambda$ to $V_w$ is a 
subsystem of $H^0(V_w, \mathcal O_{V_w}(2b a (K_{V_w}
+\Delta|_{V_w}))$. 
Therefore, $\dim \Phi (V_w) \leq \kappa (V_w, K_{V_w}+\Delta|_{V_w})$. 
Hence, 
we obtain $$\kappa (V_w, K_{V_w}+\Delta|_{V_w})=\dim \Phi(V_w)=
\dim X-\dim W. $$ 
\end{proof}

\begin{rem}\label{f-rem9.3}
In Proposition \ref{f-prop9.1}, it is sufficient to assume that 
$a$ is a positive integer such that $a(K_V+\Delta)$ is Cartier 
and that $f_*\mathcal O_V(a(K_{V/W}+\Delta))$ is nontrivial. 
\end{rem}

Let us start the proof of Theorem \ref{f-thm1.7}. 

\begin{proof}[Proof of Theorem \ref{f-thm1.7}] 
By \cite{ak}, we can construct a commutative diagram: 
$$
\xymatrix{
X' \ar[r]^{h} \ar[d]_{f'} & X\ar[d]^{f} \\
   Y' \ar[r]_{g} & Y
} 
$$
with the following properties: 
\begin{itemize}
\item[(i)] $f':X'\to Y'$ is an equidimensional surjective morphism 
from a normal projective variety $X'$ to a smooth projective 
variety $Y'$, 
\item[(ii)] $h$ and $g$ are birational, and 
\item[(iii)] $X'$ has only quotient singularities and 
$(U_{X'}\subset X')$ is toroidal for some nonempty Zariski open 
set $U_{X'}$. 
\end{itemize} 
By \cite{ak}, we may assume that 
$\Exc (h)\cup \Supp h^{-1}_*\Delta$ is contained in 
$X'\setminus U_{X'}$. 
We put 
$$
K_{X'}+\Delta'=h^*(K_X+\Delta)+E
$$ 
such that $(X', \Delta')$ is log canonical and 
$E$ is effective and $h$-exceptional. 
Let $H$ be an ample Cartier divisor on $Y'$. 
Since $K_{Y'}$ is a big divisor, 
by Kodaira's lemma, $aK_{Y'}\sim H+F$ for some effective divisor $F$ 
on $Y'$ and  a sufficiently divisible positive integer $a$. 
By Proposition \ref{f-prop9.1} (see also Remark \ref{f-rem9.3}), 
we have 
\begin{align*}
\kappa (X, K_X+\Delta)&=\kappa (X', K_{X'}+\Delta')\\
&\geq \kappa (X', a(K_{X'}+\Delta')-af'^*K_{Y'}+f'^*H)\\
&\geq \kappa (X'_y, K_{X'_y}+\Delta'|_{X'_y})+\dim Y'\\
&=\kappa (X_y, K_{X_y}+\Delta|_{X_y})+\dim Y. 
\end{align*}
Note that 
$$
\kappa (X, K_X+\Delta)\leq \kappa (X_y, K_{X_y}+\Delta|_{X_y})+\dim Y
$$ 
always holds by the easy addition formula. 
Therefore, we obtain 
$$
\kappa (X, K_X+\Delta)= \kappa (X_y, K_{X_y}+\Delta|_{X_y})+\dim Y. 
$$
This is the desired equality. 
\end{proof}

\section{Addition for logarithmic Kodaira dimensions}\label{f-sec10} 

We prove Theorem \ref{f-thm1.9}, which is due to Maehara (see \cite{maehara}), 
as an application of Theorem \ref{f-thm1.1}:~Twisted weak positivity. 
Before we start the proof of Theorem \ref{f-thm1.9}, 
we give some comments on Maehara's works (see \cite{maehara}). 

\begin{rem}\label{f-rem10.1}
In Theorem \ref{f-thm1.9}, 
$$f_*\mathcal O_X(k(K_X+D_X))\otimes \mathcal O_Y(-k(K_Y+D_Y))\otimes 
\mathcal O_Y(D_Y)$$ is weakly $1$-positive in the sense of Maehara for 
any $k>0$. 
It is the main theorem of \cite{maehara}. 
Maehara obtained Theorem \ref{f-thm1.9} as a corollary of the above 
weak $1$-positivity (see \cite[Corollary 2]{maehara}). 
In this section, we do not use Maehara's results and prove 
Theorem \ref{f-thm1.9} as an application of Theorem \ref{f-thm1.1} by 
using the weak semistable reduction theorem (see \cite{ak}).  
\end{rem}

Let us start the proof of Theorem \ref{f-thm1.9}. 
\begin{proof}[Proof of Theorem \ref{f-thm1.9}] 
By \cite{ak}, we may assume that 
\begin{itemize}
\item[(i)] $f: (U_X\subset X)\to (U_Y\subset Y)$ is toroidal 
and is equidimensional, 
\item[(ii)] $D_Y$ is contained in $Y\setminus U_Y$ 
and $D_X$ is contained in $X\setminus U_X$, 
\item[(iii)] $X$ has only quotient singularities, $Y$ is smooth, and 
\item[(iv)] $f$ is smooth over $U_Y$. 
\end{itemize}
Moreover, there is a Kawamata cover 
$\tau:Y'\to Y$ such that the normalization $X'$ of $\widetilde X=X\times_YY'$ 
is a weak semistable reduction over $Y'$.  Note that $f$ is flat. 
We put $\Delta_Y=Y\setminus U_Y$. 

\begin{lem}[{cf.~\cite[Main Theorem]{maehara}}]\label{f-lem10.2}
Under the above assumptions, 
$$
f_*\mathcal O_X(k(K_X+D_X))\otimes \mathcal O_Y(-k(K_Y+D_Y))\otimes \mathcal O_Y
(\Delta_Y)
$$
is weakly positive for 
every divisible positive integer $k$. 
\end{lem}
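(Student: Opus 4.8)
The plan is to reduce the statement to the twisted weak positivity theorem (Theorem \ref{f-thm1.1}) applied on the weakly semistable model $f':X'\to Y'$, and then to descend the conclusion along the finite Kawamata cover $\tau:Y'\to Y$. A pleasant feature of the present situation is that all spaces are projective, so the flat base change theorem for relative dualizing sheaves (Theorem \ref{f-rem2.16}) applies without any analytic subtlety. Write $\sigma:X'\to X$ for the induced morphism, and set $\Delta_{Y'}=(Y'\setminus U_{Y'})$ and $D_{Y'}=\tau^{-1}(D_Y)_{\mathrm{red}}$. Since $\tau$ is a Kawamata cover ramified to full order over every component of $\Delta_Y\supseteq D_Y$, the logarithmic ramification formula gives $K_{Y'}+\Delta_{Y'}=\tau^*(K_Y+\Delta_Y)$ and $K_{Y'}+D_{Y'}=\tau^*(K_Y+D_Y)$; in particular $\tau^*\Delta_Y-\Delta_{Y'}$ and $\tau^*D_Y-D_{Y'}$ are effective.

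First I would rewrite the sheaf by the projection formula as
$$
\mathcal F=f_*\mathcal O_X\bigl(kK_{X/Y}+kD_X-kf^*D_Y+f^*\Delta_Y\bigr),
$$
and pull it back by $\tau$. Using that $f$ is flat and $\tau$ is finite flat, flat base change identifies $\tau^*\mathcal F$ with $f'_*\sigma^*\mathcal O_X(\cdots)$ away from the ramification; over $U_{Y'}$ the morphism $\sigma$ is an isomorphism onto $f^{-1}(U_Y)$, so the two sheaves agree generically (the raw fibre product $X\times_Y Y'$ differs from its normalization $X'$ only over the boundary). On $X'$ I would then build the boundary $\Delta_{X'}$ from the horizontal transform of $D_X$ together with the reduced vertical boundary, and establish a numerical comparison
$$
\sigma^*\bigl(kK_{X/Y}+kD_X-kf^*D_Y+f^*\Delta_Y\bigr)=k(K_{X'/Y'}+\Delta_{X'})+f'^*G,
$$
where $G$ is effective and is assembled from the ramification divisors $\tau^*\Delta_Y-\Delta_{Y'}$ and $\tau^*D_Y-D_{Y'}$. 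The essential input is the weakly semistable property: $f'$ is equidimensional with reduced fibres, and $f'^*$ of a boundary component of $Y'$ is reduced, which forces the vertical coefficients of $\Delta_{X'}$ into $[0,1]$, so that $(X',\Delta_{X'})$ is log canonical with $mk(K_{X'/Y'}+\Delta_{X'})$ Cartier for divisible $k$.

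With $(X',\Delta_{X'})$ log canonical and $f':X'\to Y'$ surjective (after a harmless Stein factorization, with connected fibres) onto the smooth projective $Y'$, Theorem \ref{f-thm1.1} shows that $f'_*\mathcal O_{X'}(k(K_{X'/Y'}+\Delta_{X'}))$ is weakly positive. Since $G$ is effective, twisting by $\mathcal O_{Y'}(G)$ preserves weak positivity by the standard product property of weakly positive sheaves (see \cite{fujino-revisited}), and the generic comparison of the previous step then yields the weak positivity of $\tau^*\mathcal F$ via Lemma \ref{f-lem7.7} and Remark \ref{f-rem7.4}. To conclude on $Y$, I would descend exactly as in Step \ref{step3} of the proof of Theorem \ref{f-thm1.1}: for an ample $\mathcal H$ on $Y$ and sufficiently large $\beta$, generic global generation of $\widehat S^{\alpha\beta}(\tau^*\mathcal F)\otimes(\tau^*\mathcal H)^{\otimes\beta}$ pushes forward under $\tau_*$, using that $\tau_*\mathcal O_{Y'}\otimes\mathcal H^{\otimes\beta}$ is globally generated, to generic global generation of $\widehat S^{\alpha\beta}(\mathcal F)\otimes\mathcal H^{\otimes 2\beta}$, so $\mathcal F$ is weakly positive.

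The hard part will be the divisor bookkeeping in the second paragraph: one must track the ramification of $\sigma$ and the non-normality of $X\times_Y Y'$, and confirm that after normalization the weakly semistable structure makes $\Delta_{X'}$ a genuine log canonical boundary and the correction $f'^*G$ effective, so that the comparison map is honestly generically surjective. This is precisely the point where the weak semistable reduction theorem of \cite{ak}, rather than a mere resolution, is indispensable: a naive base change would leave vertical multiplicities exceeding one and destroy the log canonicity required to invoke Theorem \ref{f-thm1.1}.
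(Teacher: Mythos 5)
Your overall architecture is the same as the paper's: apply Theorem \ref{f-thm1.1} to the weakly semistable model $f':X'\to Y'$ with boundary $\Delta_{X'}=D_{X'}-f'^*D_{Y'}$, compare with $\tau^*\mathcal F$ by a generically isomorphic inclusion, and descend along the finite flat cover (your last step repeats Step \ref{step3} of the proof of Theorem \ref{f-thm1.1}; the paper instead quotes \cite[Lemma 1.4.~5)]{viehweg1} and \cite[Lemma 3.6]{fujino-revisited}). But the heart of the matter --- the paper's Lemma \ref{f-lem10.4} --- is exactly what you defer as ``divisor bookkeeping,'' and as sketched it does not go through. A first concrete problem: your two formulas $K_{Y'}+\Delta_{Y'}=\tau^*(K_Y+\Delta_Y)$ and $K_{Y'}+D_{Y'}=\tau^*(K_Y+D_Y)$ are incompatible, because the Kawamata cover $\tau$ ramifies along all of $\Delta_Y$ and along the auxiliary very ample divisors of Kawamata's construction, so along branch components not contained in $D_Y$ one has $K_{Y'}+D_{Y'}=\tau^*(K_Y+D_Y)+(\text{effective ramification})$. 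The paper neutralizes this by a preliminary reduction you skip: replace $D_X$ by $\Supp(D_X+f^*D_Y)$ and enlarge $D_Y$ to a simple normal crossing divisor with $\Delta_Y\leq D_Y$ containing the whole branch locus, while keeping the twist at $\mathcal O_Y(\Delta_Y)$; the small twist is then recovered by the pointwise analysis over the generic points of the components of $D_Y-\Delta_Y$, where $f$ and $\widetilde f$ are smooth, $p$ is an isomorphism, and $D_X-f^*D_Y=D_{X'}-f'^*D_{Y'}=0$ (the step (\ref{f-shiki3}) in Lemma \ref{f-lem10.4}).

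The more serious gap is that a divisor identity $\sigma^*A=k(K_{X'/Y'}+\Delta_{X'})+f'^*G$ with $G\geq 0$, plus generic agreement over $U_{Y'}$, does not produce the map $f'_*\mathcal O_{X'}(k(K_{X'/Y'}+\Delta_{X'}))\to\tau^*\mathcal F$ that Lemma \ref{f-lem7.7} needs. Flat base change lives on the non-normal fibre product $\widetilde X=X\times_YY'$: writing $A=k(K_{X/Y}+D_X-f^*D_Y)+f^*\Delta_Y$, one has $\widetilde f_*q^*\mathcal O_X(A)\simeq\tau^*\mathcal F$, but since $p:X'\to\widetilde X$ is the normalization, $p_*\mathcal O_{X'}\supseteq\mathcal O_{\widetilde X}$, so $f'_*\mathcal O_{X'}(\sigma^*A)=\widetilde f_*\bigl(p_*\mathcal O_{X'}\otimes q^*\mathcal O_X(A)\bigr)$ \emph{contains} $\tau^*\mathcal F$ --- your comparison lands in a sheaf containing $\tau^*\mathcal F$, not inside it, and dropping the effective $f'^*G$ does not fix the direction. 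What makes the paper's argument work is duality for the normalization, $p_*\mathcal O_{X'}(K_{X'})\subseteq\omega_{\widetilde X}$: the conductor is absorbed by exactly \emph{one} copy of the dualizing sheaf, and this trace inclusion cannot simply be raised to $k$-th powers. Hence Lemma \ref{f-lem10.4} proves the case $k=1$ (with the refinement (\ref{f-shiki3})), identifies $q^*\mathcal O_X(K_{X/Y}+D_X-f^*(K_Y+D_Y))\simeq\omega_{\widetilde X/Y'}\otimes\mathcal O_{\widetilde X}(q^*(D_X-f^*D_Y))$ by the flat base change theorem (Theorem \ref{f-rem2.16}), and then twists by the $(k-1)$-st power of this genuine line bundle via the projection formula; this is also exactly why the twist $\mathcal O_Y(\Delta_Y)$ enters with coefficient one, uniformly in $k$, whereas your $G$, assembled from ramification terms, would a priori grow with $k$. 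Together with the $S_2$-extension from the big open set where $X$ is smooth and $\widetilde X$ is Gorenstein, this trace argument is the missing idea; without it the ``honestly generically surjective comparison map'' you invoke --- and, as your closing paragraph concedes, leave unverified --- has not been constructed.
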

Once we establish Lemma \ref{f-lem10.2}, 
we can check: 
 
\begin{lem}\label{f-lem10.3} 
Let $H$ be an ample Cartier divisor on $Y$. 
Then there are some positive integers $a$ and $l$ such that $a(K_X+D_X)$ is 
Cartier and the linear system $\Lambda$ associated to 
$$
H^0(X, \mathcal O_X(al(K_{X/Y}+D_X-f^*D_Y))\otimes 
f^*\mathcal O_Y(lH+l\Delta_Y))
$$ 
defines a rational map $\Phi:X\dashrightarrow Z$ with 
$$
\dim Z=\kappa (F, K_F+D_X|_F)+\dim Y,
$$ 
where $F$ is a sufficiently general fiber of $f$. Moreover, 
there is a rational map $\pi:Z\dashrightarrow Y$ such that 
$f=\pi\circ \Phi$. 
\end{lem}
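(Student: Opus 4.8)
The plan is to transcribe the proof of Proposition \ref{f-prop9.1} into the logarithmic setting, with two substitutions: the relative log canonical divisor $K_{V/W}+\Delta$ there is replaced by $K_{X/Y}+D_X-f^*D_Y$, and the appeal to Theorem \ref{f-thm1.1} is replaced by the weak positivity furnished by Lemma \ref{f-lem10.2}. The key bookkeeping identity is
$$
al(K_{X/Y}+D_X-f^*D_Y)=al(K_X+D_X)-f^*\big(al(K_Y+D_Y)\big),
$$
valid whenever $a(K_X+D_X)$ is Cartier and $l$ is a positive integer, so that by the projection formula $f_*\mathcal O_X(al(K_{X/Y}+D_X-f^*D_Y))\otimes\mathcal O_Y(l\Delta_Y)$ is exactly the sheaf appearing in Lemma \ref{f-lem10.2} with $k=al$. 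First I would fix $a$ divisible enough that $a(K_X+D_X)$ is Cartier and that
$$
\mathcal F:=f_*\mathcal O_X\big(a(K_{X/Y}+D_X-f^*D_Y)\big)\otimes\mathcal O_Y(\Delta_Y)
$$
is nonzero; this requires $\kappa(F,K_F+D_X|_F)\geq 0$, and if it is $-\infty$ there is nothing to prove, as in Remark \ref{f-rem9.2}. By Lemma \ref{f-lem10.2} the sheaf $\mathcal F$ is weakly positive.

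Next I would run the Viehweg-type argument of Proposition \ref{f-prop9.1}. Applying weak positivity of $\mathcal F$ to the ample bundle $\mathcal O_Y(H)$ (with $\alpha=2$) produces a positive integer $b$ for which $\widehat S^{2b}(\mathcal F)\otimes\mathcal O_Y(bH)$ is generated by global sections over a nonempty Zariski open set. Composing a generating section with the nonzero multiplication map
$$
\widehat S^{2b}(\mathcal F)\longrightarrow f_*\mathcal O_X\big(2ab(K_{X/Y}+D_X-f^*D_Y)\big)\otimes\mathcal O_Y(2b\Delta_Y)
$$
and using adjunction $f^*f_*\to\mathrm{id}$, one obtains, after twisting by $f^*\mathcal O_Y(bH)$ and setting $l=2b$, an inclusion
$$
f^*\mathcal O_Y(bH)\hookrightarrow\mathcal O_X\big(al(K_{X/Y}+D_X-f^*D_Y)\big)\otimes f^*\mathcal O_Y(lH+l\Delta_Y)=:L,
$$
where I may assume $bH$ is very ample after replacing $b$ by a large multiple. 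Here the reflexivity of $f_*\mathcal O_X(\cdots)$, guaranteed by the equidimensionality of $f$, is what lets the adjunction and generic surjectivity arguments go through. The sections of $L$ then define $\Phi:X\dashrightarrow Z$ (taking $b$ large enough that $\mathbb C(Z)$ is algebraically closed in $\mathbb C(X)$), and the displayed inclusion, together with the very ampleness of $bH$ and the embedding of $Y$ by $|bH|$, yields the factorizing rational map $\pi:Z\dashrightarrow Y$ with $f=\pi\circ\Phi$.

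It remains to compute $\dim Z=\dim\Phi(X_y)+\dim Y$ for a general fibre $X_y=F$, which goes exactly as in Proposition \ref{f-prop9.1}. The twist $f^*\mathcal O_Y(lH+l\Delta_Y)$ is pulled back from $Y$ and hence restricts trivially to $F$, while $F$ lies over $U_Y$ where $f$ is smooth, so $L|_F\simeq\mathcal O_F(al(K_F+D_X|_F))$ by adjunction; thus restricting $\Lambda$ to $F$ gives a subsystem of the $al$-log-canonical system and so $\dim\Phi(F)\leq\kappa(F,K_F+D_X|_F)$. The reverse inequality follows from the easy addition formula applied to $L|_{X_y}$ and $\Phi|_{X_y}$, as the restriction of $L$ to a general fibre of $\Phi$ has Kodaira dimension $0$. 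The main obstacle, such as it is, lies entirely in this last step: verifying the two-sided dimension estimate and, just before it, keeping the twisting bundle $f^*\mathcal O_Y(lH+l\Delta_Y)$ matched to the statement through the symmetric-power and adjunction manipulations. The presence of the boundary $\Delta_Y$ is harmless precisely because it is pulled back from $Y$ and so does not interfere with the fibrewise estimates.
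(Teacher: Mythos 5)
Your proposal is correct and follows essentially the same route as the paper, whose entire proof of Lemma \ref{f-lem10.3} is a pointer to the proof of Proposition \ref{f-prop9.1} (with Remark \ref{f-rem9.3}), with Lemma \ref{f-lem10.2} supplying the weak positivity input exactly as you use it. One cosmetic slip: the sheaf of Lemma \ref{f-lem10.2} with $k=al$ carries the single twist $\mathcal O_Y(\Delta_Y)$, not $\mathcal O_Y(l\Delta_Y)$, but your actual argument works with the correctly twisted sheaf $\mathcal F$ and recovers $l\Delta_Y=2b\Delta_Y$ in the final line bundle through the symmetric power, so nothing in the proof is affected.
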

$$
\xymatrix{
X\ar@{-->}[r]^{\Phi} \ar[d]_{f} & Z\ar@{-->}[ld]^{\pi} \\
   Y & 
} 
$$
\begin{proof}
See the proof of Proposition \ref{f-prop9.1} (see also Remark \ref{f-rem9.3}). 
\end{proof}

If $a$ is a sufficiently large and divisible positive integer, then $a(K_X+D_X)$ is Cartier, 
$$f_*\mathcal O_X(a(K_X+D_X))\otimes \mathcal O_Y(-a(K_Y+D_Y))\otimes \mathcal O_Y(\Delta_Y)
\ne 0$$ is 
weakly positive by Lemma \ref{f-lem10.2}, and $$a(K_Y+D_Y)-\Delta_Y
\sim H+G$$ for an ample Cartier divisor $H$ 
and some effective divisor $G$ on $Y$ by Kodaira's lemma. 
Therefore, by Lemma \ref{f-lem10.3}, 
we obtain 
\begin{align*}
&\kappa (X, K_X+D_X)\\&\geq \kappa (X, a(K_X+D_X)-af^*(K_Y+D_Y)+f^*\Delta_Y+f^*H)
\\&\geq \kappa (F, K_F+D_X|_F)+\dim Y. 
\end{align*}
On the other hand, 
we always have 
$$
\kappa (X, K_X+D_X)\leq \kappa (F, K_F+D_X|_F)+\dim Y 
$$ 
by the easy addition formula. 
Thus, we obtain 
$$
\kappa (X, K_X+D_X)=\kappa (F, K_F+D_X|_F)+\dim Y. 
$$ 
This is the desired equality. 

Anyway, we see that it is sufficient to prove Lemma \ref{f-lem10.2}. 
For the proof of Lemma \ref{f-lem10.2}, 
we can replace $D_Y$ with $\Delta_Y$ 
and $D_X$ with $\Supp (D_X+f^*D_Y)$. 
Moreover, by adding some divisors to $D_Y$, we may further 
assume that $D_Y$ is a simple normal crossing 
divisor, 
$\Delta_Y\leq D_Y$, 
and $\tau$ is \'etale over $Y\setminus D_Y$. 
We put $f':X'\to Y'$, $p:X'\to \widetilde X$, $q:\widetilde X\to X$, and $K_{X'}+D_{X'}
=\lambda^*(K_X+D_X)$ where $\lambda=q\circ p: X'\to X$. 
$$
\xymatrix{
X'\ar[r]^{p}\ar[dr]_{f'}&\widetilde X \ar[r]^{q} \ar[d]^{\widetilde f} & X\ar[d]^{f} \\
  & Y' \ar[r]_{\tau} & Y
} 
$$
We also put $K_{Y'}+D_{Y'}=\tau^*(K_Y+D_Y)$. Note that 
$D_{X'}$ and $D_{Y'}$ are reduced. 
Let $\mathrm {Sing}(X)$ be the singular locus of $X$. 
We put $\Sigma =f^{-1}(f(\mathrm{Sing}(X)))$ and 
$\widetilde \Sigma=q^{-1}(\Sigma)$. 
Then $\codim _{X}\Sigma\geq 2$, 
$\codim _{\widetilde X}\widetilde{\Sigma}\geq 2$ and 
$\omega_{\widetilde X}|_{X^{\dag}}$ is locally free by the 
flat base change theorem (cf.~Theorem \ref{f-rem2.16}) 
with $X^{\dag}=\widetilde X
\setminus \widetilde{\Sigma}$. 
We put $\omega_{\widetilde X}^{[k]} =\iota_*((\omega_{\widetilde X}|_{X^\dag})^
{\otimes k})$ where $\iota: X^\dag\to \widetilde X$. 
We note that $\widetilde X$ is Cohen--Macaulay by the local description of 
Kawamata's cover. 
We also note that $\omega_{\widetilde X}^{[k]}$ is invertible if 
$\mathcal O_X(kK_X)$ is invertible. We can check: 

\begin{lem}[{cf.~\cite[Lemma C]{maehara}}]\label{f-lem10.4}
We have the following inclusion 
\begin{align*}\tag{$\spadesuit$}\label{f-shiki2}
&p_*\mathcal O_{X'}(k(K_{X'/Y'}+D_{X'}-f'^*D_{Y'}))
\\&\subset \omega_{\widetilde X/Y'}^{[k]}\otimes 
\mathcal O_{\widetilde X}(k(q^*(D_X-f^*D_{Y})))\otimes
\mathcal O_{\widetilde X}(q^*f^*\Delta_Y)
\end{align*} 
for every divisible positive integer $k$. 
\end{lem}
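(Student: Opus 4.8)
The plan is to reduce the claimed inclusion (\ref{f-shiki2}) to a single statement about the conductor of the normalization $p:X'\to \widetilde X$, and then to verify that statement by a local toroidal computation. First I would rewrite the left-hand side purely in terms of $X$. Since $\lambda=q\circ p$, $K_{X'}+D_{X'}=\lambda^*(K_X+D_X)$, and $f'^*(K_{Y'}+D_{Y'})=\lambda^*f^*(K_Y+D_Y)$, the adjunction relations give the divisor identity
\begin{align*}
K_{X'/Y'}+D_{X'}-f'^*D_{Y'}&=(K_{X'}+D_{X'})-f'^*(K_{Y'}+D_{Y'})\\
&=\lambda^*(K_X+D_X)-\lambda^*f^*(K_Y+D_Y)\\
&=\lambda^*(K_{X/Y}+D_X-f^*D_Y).
\end{align*}
For a divisible $k$ the divisor $k(K_{X/Y}+D_X-f^*D_Y)$ is Cartier on $X$ (quotient singularities are $\mathbb Q$-Gorenstein), so $\mathcal O_{X'}(k(K_{X'/Y'}+D_{X'}-f'^*D_{Y'}))=\lambda^*\mathcal O_X(k(K_{X/Y}+D_X-f^*D_Y))=p^*\mathcal O_{\widetilde X}(kq^*(K_{X/Y}+D_X-f^*D_Y))$. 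Applying $p_*$ and the projection formula for the finite morphism $p$ yields
$$
p_*\mathcal O_{X'}(k(K_{X'/Y'}+D_{X'}-f'^*D_{Y'}))\simeq \mathcal O_{\widetilde X}(kq^*(K_{X/Y}+D_X-f^*D_Y))\otimes p_*\mathcal O_{X'}.
$$

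Next I would simplify the right-hand side over the open set $X^{\dag}=\widetilde X\setminus \widetilde\Sigma$, whose complement has codimension $\geq 2$ and on which $\omega_{\widetilde X}$ is invertible. By the flat base change theorem (Theorem \ref{f-rem2.16}), applied to the flat morphism $f$ and the finite flat $\tau$, one has $\omega_{\widetilde X/Y'}=q^*\omega_{X/Y}$; hence $\omega_{\widetilde X/Y'}^{[k]}|_{X^{\dag}}=q^*\mathcal O_X(kK_{X/Y})|_{X^{\dag}}$ for divisible $k$, using that $X^{\dag}$ lies over the smooth locus of $X$. Feeding this into the right-hand side and cancelling the common line bundle $\mathcal O_{\widetilde X}(kq^*(K_{X/Y}+D_X-f^*D_Y))$, the inclusion (\ref{f-shiki2}) becomes, over $X^{\dag}$, the single statement
$$
p_*\mathcal O_{X'}\subseteq \mathcal O_{\widetilde X}(q^*f^*\Delta_Y),
$$
that is, a local equation of $q^*f^*\Delta_Y$ should lie in the conductor of $p$ at every codimension-one point of $\widetilde X$.

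To prove this conductor bound I would exploit that $\widetilde X=X\times_Y Y'$ is normal over $U_Y$: since $f$ is smooth over $U_Y$, the morphism $\widetilde X\to Y'$ is smooth over $\tau^{-1}(U_Y)$ and $\widetilde X$ is smooth there, so the non-normal locus of $\widetilde X$ meets $X^{\dag}$ only over $\Delta_Y=Y\setminus U_Y$, i.e.\ inside $\Supp(q^*f^*\Delta_Y)$. It then remains to bound the multiplicities: at a codimension-one point of $\widetilde X$ lying over $\Delta_Y$, the weak semistable (toroidal) description of $f$ together with the explicit monomial form of the Kawamata cover $\tau$ reduces the comparison of $\mathcal O_{\widetilde X}$ with its normalization $p_*\mathcal O_{X'}$ to an elementary computation in the relevant lattice, from which the conductor is dominated by $q^*f^*\Delta_Y$. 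This is precisely the content of \cite[Lemma C]{maehara}.

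Finally I would upgrade the inclusion from $X^{\dag}$ to all of $\widetilde X$. The right-hand side is $S_2$ (it is the reflexive sheaf $\omega_{\widetilde X}^{[k]}$ tensored with invertible sheaves) and $\widetilde X\setminus X^{\dag}$ has codimension $\geq 2$, so it agrees with $\iota_*$ of its restriction to $X^{\dag}$; the left-hand side, being torsion-free, injects into $\iota_*$ of its own restriction. Applying $\iota_*$ to the inclusion over $X^{\dag}$ therefore gives (\ref{f-shiki2}) over $\widetilde X$. The main obstacle is the codimension-one conductor estimate of the third paragraph: the reductions in the first two paragraphs are formal divisor-theoretic and duality bookkeeping, but genuinely controlling the normalization of the possibly non-normal fiber product by $q^*f^*\Delta_Y$ requires the local toroidal structure of the weak semistable reduction.
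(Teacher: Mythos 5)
Your reductions are correct, and your route is genuinely different from the paper's at the decisive step. After your cancellations, the inclusion (\ref{f-shiki2}) is indeed equivalent over $X^{\dag}$ to the conductor bound $p_*\mathcal O_{X'}\subseteq \mathcal O_{\widetilde X}(q^*f^*\Delta_Y)$, and your $S_2$ bookkeeping at both ends (discarding $\widetilde \Sigma$, then extending by $\iota_*$) matches the paper's opening reduction. Where you diverge is in how that bound is established. You propose an explicit toroidal computation; this does close, since at a codimension-one point of $\widetilde X$ over $\Delta_Y$ the local ring has the form $A[s]/(s^e-x^au)$ with $A$ a discrete valuation ring and $u$ a unit, the integral closure is generated by the elements $s^ix^{-\lfloor ia/e\rfloor}$ with $\lfloor ia/e\rfloor\leq a-1<a=\operatorname{mult}(q^*f^*\Delta_Y)$, and checking at codimension-one points suffices because $\mathcal O_{\widetilde X}(q^*f^*\Delta_Y)$ is invertible on the Cohen--Macaulay $\widetilde X$, hence $S_2$. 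But you never carry this computation out, and your appeal to \cite[Lemma C]{maehara} for it is circular: Lemma \ref{f-lem10.4} \emph{is} the paper's version of that very lemma, so in a self-contained proof the computation is exactly what must be supplied. The paper avoids it entirely by a soft duality argument: it starts from the trace inclusion $p_*\mathcal O_{X'}(K_{X'})\subseteq \omega_{\widetilde X}$, valid for the normalization of any reduced variety, twists using $D_{X'}-\lambda^*D_X\leq 0$ and $-f'^*K_{Y'}$, drops the effective divisor $\widetilde f^*D_{Y'}$ to reach the inclusion with the factor $q^*f^*D_Y$, improves $D_Y$ to $\Delta_Y$ by the same generic observation you make (over components of $D_Y-\Delta_Y$ everything is smooth, $p$ is an isomorphism, and $D_X-f^*D_Y=D_{X'}-f'^*D_{Y'}=0$) together with $S_2$, and finally obtains the power $k$ by tensoring the $k=1$ inclusion (\ref{f-shiki3}) with $\omega^{\otimes k-1}_{\widetilde X/Y'}\otimes \mathcal O_{\widetilde X}((k-1)q^*(D_X-f^*D_Y))$, identified via flat base change (Theorem \ref{f-rem2.16}) and the projection formula. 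In effect, the paper's trace inclusion absorbs the conductor into $\omega_{\widetilde X}$, so no monomial or lattice information about $\widetilde X$ is needed at this step (the weak semistability is used only elsewhere, e.g.\ for $D_{X'}-f'^*D_{Y'}\geq 0$ in Lemma \ref{f-lem10.5}); your approach buys an explicit description of the singularities of the fiber product at the cost of the one computation you only gesture at. Complete that local computation and your proof is correct and self-contained.
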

\begin{proof}[Proof of Lemma \ref{f-lem10.4}] 
Since $k$ is divisible, 
we have that $\omega^{[k]}_{\widetilde X/ Y'}$ is invertible and 
that $k(K_{X'/Y'}+D_{X'}-f'^*D_{Y'})$ and 
$k(q^*(D_X-f^*D_Y))$ are Cartier. 
Therefore, the both hand sides satisfy Serre's $S_2$ condition. 
We also note that 
$X\setminus \Sigma$ is smooth and that 
$X^\dag$ is Gorenstein. 
By replacing $X$ and $\widetilde X$ 
with $X\setminus \Sigma$ and $X^\dag$ respectively, 
we may assume that $X$ is smooth and that $\widetilde X$ is 
Gorenstein. 
We have 
$$
p_*\mathcal O_{X'}(K_{X'})\subset \omega_{\widetilde X}
$$ 
because $p$ is the normalization. 
Since $D_{X'}-\lambda^*D_X\leq 0$, we obtain 
$$
p_*\mathcal O_{X'}(K_{X'/Y'}+D_{X'}-\lambda^*D_X)\subset \omega_{\widetilde X/Y'}. 
$$ 
This is equivalent to 
\begin{align*}
&p_*\mathcal O_{X'}(K_{X'/Y'}+D_{X'}-f'^*D_{Y'})\otimes \mathcal O_{\widetilde X}(\widetilde f^*D_{Y'})\\ 
&\subset \omega_{\widetilde X/Y'}\otimes \mathcal O_{\widetilde X}(q^*(D_X-f^*D_Y))\otimes \mathcal O_{\widetilde X}
(q^*f^*D_Y). 
\end{align*} 
Since $\widetilde f^*D_{Y'}$ is effective, 
we have 
\begin{align*}
&p_*\mathcal O_{X'}(K_{X'/Y'}+D_{X'}-f'^*D_{Y'})\\ 
&\subset \omega_{\widetilde X/Y'}\otimes \mathcal O_{\widetilde X}(q^*(D_X-f^*D_Y))\otimes \mathcal O_{\widetilde X}
(q^*f^*D_Y). 
\end{align*} 
Over the generic point of every irreducible component 
of $D_Y-\Delta_Y$, $f$ is smooth, 
$\widetilde f$ is smooth, 
$p$ is an isomorphism, 
and $D_X-f^*D_Y=D_{X'}-f'^*D_{Y'}=0$. 
Therefore, we obtain 
\begin{align*}\tag{$\heartsuit$}\label{f-shiki3}
&p_*\mathcal O_{X'}(K_{X'/Y'}+D_{X'}-f'^*D_{Y'})\\ 
&\subset \omega_{\widetilde X/Y'}\otimes \mathcal O_{\widetilde X}(q^*(D_X-f^*D_Y))\otimes \mathcal O_{\widetilde X}
(q^*f^*\Delta_Y). 
\end{align*}
We note that 
\begin{align*}
q^*\mathcal O_X(K_X+D_X-f^*(K_Y+D_Y))
\simeq \omega_{\widetilde X/Y'}\otimes \mathcal O_{\widetilde X}(q^*(D_X-f^*D_Y))
\end{align*} 
by the flat base change theorem (cf.~Theorem \ref{f-rem2.16}) and 
\begin{align*}
&p^*q^*\mathcal O_X(K_X+D_X-f^*(K_Y+D_Y))
\\&\simeq \mathcal O_{X'}(K_{X'}+D_{X'}-f'^*(K_{Y'}+D_{Y'})). 
\end{align*}
By taking 
$$\otimes \omega^{\otimes k-1}_{\widetilde X/Y'}\otimes \mathcal O_{\widetilde X}
((k-1)q^*(D_X-f^*D_Y))$$ with (\ref{f-shiki3}), we obtain the desired inclusion by the projection formula. 
\end{proof}

Let us go back to the proof of Lemma \ref{f-lem10.2}. 
Note that 
\begin{align*}
&\omega_{\widetilde X/Y'}^{[k]}\otimes 
\mathcal O_{\widetilde X}(k(q^*(D_X-f^*D_{Y})))\\ 
&\simeq
q^*(\mathcal O_X(k(K_X+D_X))\otimes f^*\mathcal O_Y(-k(K_Y+D_Y))) 
\end{align*} 
when $k$ is a divisible positive integer. 
By applying $\widetilde f_*$, 
(\ref{f-shiki2}) implies: 
\begin{lem}\label{f-lem10.5}
There exists a generically isomorphic inclusion 
\begin{align*}
&f'_*\mathcal O_{X'}(k(K_{X'}+D_{X'}))\otimes \mathcal O_{Y'}(-k(K_{Y'}+D_{Y'}))
\\ &\subset 
\tau^*(f_*\mathcal O_X(k(K_X+D_X))
\otimes \mathcal O_Y(-k(K_Y+D_Y))\otimes \mathcal O_Y(\Delta_Y))
\end{align*}
for every divisible positive integer $k$. 
\end{lem}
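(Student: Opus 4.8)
The plan is to deduce Lemma \ref{f-lem10.5} by applying the pushforward $\widetilde f_*$ to the inclusion (\ref{f-shiki2}) of Lemma \ref{f-lem10.4} and then rewriting each of the two resulting sheaves on $Y'$. Since $\widetilde f_*$ is left exact it preserves injections, so no new inclusion has to be produced; the entire content is the identification of the two sides, together with the generic-isomorphism claim.

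First I would treat the left-hand side. Using the factorization $f'=\widetilde f\circ p$ from the commutative diagram we have $\widetilde f_*p_*=f'_*$, so $\widetilde f_*$ of the left side of (\ref{f-shiki2}) equals $f'_*\mathcal O_{X'}(k(K_{X'/Y'}+D_{X'}-f'^*D_{Y'}))$. Writing $K_{X'/Y'}=K_{X'}-f'^*K_{Y'}$ and using that $k(K_{Y'}+D_{Y'})$ is Cartier for divisible $k$, the projection formula turns this into $f'_*\mathcal O_{X'}(k(K_{X'}+D_{X'}))\otimes \mathcal O_{Y'}(-k(K_{Y'}+D_{Y'}))$, which is the left side of the asserted inclusion.

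Next I would treat the right-hand side. By the isomorphism $\omega_{\widetilde X/Y'}^{[k]}\otimes \mathcal O_{\widetilde X}(k(q^*(D_X-f^*D_Y)))\simeq q^*\mathcal G$, recorded just before the lemma, where $\mathcal G=\mathcal O_X(k(K_X+D_X))\otimes f^*\mathcal O_Y(-k(K_Y+D_Y))$, the right side of (\ref{f-shiki2}) becomes $q^*\mathcal G\otimes \mathcal O_{\widetilde X}(q^*f^*\Delta_Y)$. From the Cartesian square $f\circ q=\tau\circ \widetilde f$ one has $q^*f^*\Delta_Y=\widetilde f^*\tau^*\Delta_Y$, so the projection formula pulls this factor out to give $\widetilde f_*(q^*\mathcal G)\otimes \mathcal O_{Y'}(\tau^*\Delta_Y)$. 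Because $\tau$ is the finite flat Kawamata cover and $\widetilde X=X\times_Y Y'$, flat base change (cf.~Theorem \ref{f-rem2.16}) yields $\widetilde f_*(q^*\mathcal G)\simeq \tau^*f_*\mathcal G$, and a final application of the projection formula on $Y$ identifies $f_*\mathcal G$ with $f_*\mathcal O_X(k(K_X+D_X))\otimes \mathcal O_Y(-k(K_Y+D_Y))$. Collecting the $\tau^*$'s recovers exactly the right side of Lemma \ref{f-lem10.5}.

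Finally, for the generic-isomorphism claim I would restrict over the dense open $U\subset Y'$ on which $p$ is an isomorphism, $f$ and $\widetilde f$ are smooth, $\tau$ is \'etale, and $D_X-f^*D_Y$ vanishes (the locus over the generic point used in the proof of Lemma \ref{f-lem10.4}); there (\ref{f-shiki2}) is an equality, so its pushforward is an isomorphism over $U$. I expect the step that requires the most care to be the flat base change applied to the coherent sheaf $q^*\mathcal G$: it is legitimate here only because $X$ and $Y$ are projective, so we stay in the algebraic category and the analytic caveat raised in \ref{f-say2.15} does not obstruct us. The remaining bookkeeping of the projection formula and of the identity $q^*f^*\Delta_Y=\widetilde f^*\tau^*\Delta_Y$ is routine but must be carried out consistently.
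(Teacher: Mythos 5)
Your proposal follows the paper's own route: the paper likewise obtains Lemma \ref{f-lem10.5} by applying $\widetilde f_*$ to the inclusion (\ref{f-shiki2}) of Lemma \ref{f-lem10.4}, using the isomorphism $\omega^{[k]}_{\widetilde X/Y'}\otimes \mathcal O_{\widetilde X}(k(q^*(D_X-f^*D_Y)))\simeq q^*(\mathcal O_X(k(K_X+D_X))\otimes f^*\mathcal O_Y(-k(K_Y+D_Y)))$ displayed just before the lemma, and the flatness of the Kawamata cover --- the paper's laconic ``This is because $g$ is flat'' (where $g$ is a misprint for $\tau$) is exactly your flat base change step $\widetilde f_*q^*\mathcal G\simeq \tau^*f_*\mathcal G$. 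Your bookkeeping with $f'_*=\widetilde f_*p_*$, the projection formula, and $q^*f^*\Delta_Y=\widetilde f^*\tau^*\Delta_Y$ is correct, as is your observation that the base change is unproblematic here because $X$ and $Y$ are projective, matching the caveat in \ref{f-say2.15}.

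One slip in your final paragraph: there is no dense open subset of $Y$ over which $D_X-f^*D_Y$ vanishes, since $D_X$ has a horizontal part in general (otherwise $K_F+D_X|_F$ would just be $K_F$); in the proof of Lemma \ref{f-lem10.4} the vanishing $D_X-f^*D_Y=D_{X'}-f'^*D_{Y'}=0$ is asserted only over the generic points of the components of $D_Y-\Delta_Y$, a codimension-one locus, not over the generic point of $Y$. Fortunately your conclusion survives without this: over the open set of $Y'$ lying over $U_Y$ minus the branch locus of $\tau$, the map $p$ is an isomorphism, $q$ is \'etale, $f'^*D_{Y'}$, $q^*f^*D_Y$, and $q^*f^*\Delta_Y$ all vanish, $D_{X'}=\lambda^*D_X$, and $\omega_{\widetilde X/Y'}\simeq q^*\omega_{X/Y}$, so both sides of (\ref{f-shiki2}) there equal $\mathcal O_{\widetilde X}(k(K_{\widetilde X/Y'}+q^*D_X))$: the common twist by $q^*D_X$ \emph{cancels} rather than vanishes. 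With that one-line repair the generic isomorphism follows and your proof coincides with the paper's.
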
This is because $g$ is flat. 
Since 
\begin{align*}
&f'_*\mathcal O_{X'}(k(K_{X'}+D_{X'}))\otimes \mathcal O_{Y'}(-k(K_{Y'}+D_{Y'}))
\\ 
&=f'_*\mathcal O_{X'}(k(K_{X'/Y'}+D_{X'}-f'^*D_{Y'}))
\end{align*}
is weakly positive by Theorem \ref{f-thm1.1}, 
we can easily check that 
$$f_*\mathcal O_X(k(K_X+D_X))\otimes \mathcal O_Y(-k(K_Y+D_Y))\otimes \mathcal 
O_Y(\Delta_Y)
$$ is also weakly positive (see, for example, \cite[Lemma 1.4.~5)]{viehweg1} and 
\cite[Lemma 3.6]{fujino-revisited}). 
Note that 
$D_{X'}-f'^*D_{Y'}$ is effective 
since $f'$ is weakly semistable. 
Thus we obtain Lemma \ref{f-lem10.2}. 
This implies that the equality in Theorem \ref{f-thm1.9} holds. 
\end{proof}

Note that Theorem \ref{f-thm1.9} contains a generalization of 
\cite[Theorem 30]{kawamata1}, which plays an important role 
for Kawamata's theorem on the {\em{quasi-Albanese maps}} for varieties of the logarithmic 
Kodaira dimension zero 
(see \cite[Corollary 29]{kawamata1}). For the details, 
see \cite{fujino-quasi-alb}. 



\begin{thebibliography}{Schm}

\bibitem[AK]{ak} 
D.~Abramovich, K.~Karu, 
Weak semistable reduction in characteristic $0$, 
Invent. Math. {\textbf{139}} (2000), no. 2, 241--273. 

\bibitem[A]{ambro} 
F.~Ambro, An injectivity theorem, 
Compos. Math. {\textbf{150}} (2014), no. 6, 999--1023. 

\bibitem[BS]{bs} 
C.~B\u{a}nic\u{a}, O.~St\u{a}n\u{a}\c{s}il\u{a}, 
{\em{Algebraic methods in the global theory of complex spaces}}, 
Translated from the Romanian. Editura Academiei, Bucharest; John Wiley $\&$ Sons, 
London-New York-Sydney, 1976.

\bibitem[Ca]{campana} 
F.~Campana, 
Orbifolds, special varieties and classification theory, 
Ann. Inst. Fourier (Grenoble) {\textbf{54}} (2004), no. 3, 499--630. 

\bibitem[CHP]{chp} 
F.~Campana, A.~H\"oring, T.~Peternell, 
Abundance for K\"ahler theeefolds, preprint (2014). 
arXiv:1403.3175 [math.AG]

\bibitem[Co]{conrad}
B.~Conrad, {\em{Grothendieck duality and base change}}, 
Lecture Notes in Mathematics, {\textbf{1750}}. Springer-Verlag, Berlin, 2000.

\bibitem[D]{deligne} 
P.~Deligne, 
Th\'eorie de Hodge. III, 
Inst. Hautes \'Etudes Sci. Publ. Math. No. {\textbf{44}} (1974), 5--77.

\bibitem[E1]{elzein} F.~El Zein, 
Mixed Hodge structures, Trans. Amer. Math. Soc. 
{\textbf{275}} (1983), 
no. 1, 71--106. 

\bibitem[E2]{elzein2} F.~El Zein, 
{\em{Introduction \'a la th\'eorie de Hodge mixte}}, 
Actualit\'es Math\'ematiques, Hermann, Paris, 1991. 

\bibitem[EL]{elzein-le}
F.~El Zein, D.~T.~L\^e, 
Mixed Hodge structures, in {\em{Hodge theory}}, edited by E.~Cattani, 
F.~El Zein, P.~A.~Griffiths, D.~T.~L\^e, 
Mathematical Notes, \textbf{49}, 
Princeton University Press, Princeton, NJ, 2014.

\bibitem[EV]{ev} 
H.~Esnault, E.~Viehweg, 
{\em{Lectures on vanishing theorems}}, DMV 
Seminar, {\textbf{20}}. Birkh\"auser Verlag, Basel, 1992.

\bibitem[Fi]{fischer}
G.~Fischer, {\em{Complex analytic geometry}}, 
Lecture Notes in Mathematics, Vol. {\textbf{538}}. 
Springer-Verlag, Berlin-New York, 1976.

\bibitem[Fk1]{fujiki} 
A.~Fujiki, Duality of mixed Hodge structures of algebraic varieties, 
Publ. Res. Inst. Math. Sci. {\textbf{16}} (1980), no. 3, 635--667.

\bibitem[Fk2]{fujiki2} 
A.~Fujiki, On the Douady space of a compact complex space in the category 
$\mathcal C$, Nagoya Math. J. {\textbf{85}} (1982), 189--211.

\bibitem[F1]{fujino1} 
O.~Fujino, Higher direct images of log canonical divisors, 
J. Differential Geom. {\textbf{66}} (2004), no. 3, 453--479.

\bibitem[F2]{fujino-what} 
O.~Fujino, What is log terminal?, 
{\em{Flips for $3$-folds and $4$-folds}}, 49--62, 
Oxford Lecture Ser. Math. Appl., {\textbf{35}}, Oxford Univ. Press, Oxford, 2007.

\bibitem[F3]{fujino-book} 
O.~Fujino, 
Introduction to the log minimal model program for log canonical pairs, 
preprint (2008). 

\bibitem[F4]{fujino0} 
O.~Fujino, 
On injectivity, vanishing and torsion-free theorems for algebraic varieties, 
Proc. Japan Acad. Ser. A Math. Sci. {\textbf{85}} (2009), no. 8, 95--100.

\bibitem[F5]{fujino-kawa} 
O.~Fujino, 
On Kawamata's theorem, 
{\em{Classification of algebraic varieties}}, 305--315, 
EMS Ser. Congr. Rep., Eur. Math. Soc., Z\"urich, 2011.

\bibitem[F6]{fujino2} 
O.~Fujino, Fundamental theorems for the log minimal model program, 
Publ. Res. Inst. Math. Sci. {\textbf{47}} (2011), no. 3, 727--789.

\bibitem[F7]{fujino3} 
O.~Fujino, A transcendental approach to Koll\'ar's injectivity theorem II, 
J. Reine Angew. Math. {\textbf{681}} (2013), 149--174.

\bibitem[F8]{fujino-vani} 
O.~Fujino, Vanishing theorems, to appear in 
Adv. Stud. Pure Math. 
arXiv:1202.4200 [math.AG]

\bibitem[F9]{fujino4} 
O.~Fujino, Semipositivity theorems for moduli problems, preprint (2012). 
arXiv:1210.5784 [math.AG] 

\bibitem[F10]{fujino5} 
O.~Fujino, Injectivity theorems, to 
appear in Adv. Stud. Pure Math. 
arXiv:1303.2404 [math.AG]

\bibitem[F11]{fujino-someMMP} 
O.~Fujino, Some remarks on the 
minimal model program for log canonical pairs, 
J. Math. Sci. Univ. Tokyo {\textbf{22}} (2015), no. 1, 149--192. 

\bibitem[F12]{fujino-foundation} 
O.~Fujino, Foundation of the minimal model program, preprint (2014), 
2014/4/16, version 0.01.  

\bibitem[F13]{fujino-sub} 
O.~Fujino, On subadditivity of the logarithmic Kodaira dimension, preprint 
(2014). 
arXiv:1406.2759 [math.AG]

\bibitem[F14]{fujino-quasi-alb} 
O.~Fujino, On quasi-Albanese maps, preprint (2014). 
Kyoto-Math 2015-03 

\bibitem[F15]{fujino-revisited} 
O.~Fujino, Subadditivity of the logarithmic Kodaira dimension for morphisms 
of relative dimension one revisited, 
preprint (2015). Kyoto-Math 2015-02

\bibitem[F16]{fujino-zucker65} 
O.~Fujino, On semipositivity, injectivity and vanishing theorems, 
preprint (2015). 
arXiv:1503.06503 [math.AG]

\bibitem[FF]{fujino-fujisawa}
O.~Fujino, T.~Fujisawa, Variations of mixed Hodge structure and 
semipositivity theorems, 
Publ. Res. Inst. Math. Sci. {\textbf{50}} (2014), no. 4, 589--661. 

\bibitem[FFS]{ffs} 
O.~Fujino, T.~Fujisawa, M.~Saito, 
Some remarks on the semipositivity theorems, 
Publ. Res. Inst. Math. Sci. {\textbf{50}} (2014), no. 1, 85--112.

\bibitem[FG]{fujino-gongyo} 
O.~Fujino, Y.~Gongyo, 
On images of weak Fano manifolds II, 
{\em{Algebraic and complex geometry}}, 201--207, 
Springer Proc. Math. Stat., {\textbf{71}}, Springer, Cham, 2014. 

\bibitem[Fuk]{fukuda} 
S.~Fukuda, 
A generalization of the Kawamata--Viehweg vanishing theorem after Reid, 
Comm. Algebra {\textbf{24}} (1996), no. 10, 3265--3268. 

\bibitem[HM]{hacon-mckernan} 
C.~D.~Hacon, J.~M\textsuperscript{c}Kernan, 
Boundedness of pluricanonical maps of varieties of general type, 
Invent. Math. {\textbf{166}} (2006), no. 1, 1--25. 

\bibitem[H1]{hartshorne} 
R.~Hartshorne, 
{\em{Residues and duality}}, 
Lecture notes of a seminar on the work of A.~Grothendieck, given at 
Harvard 1963/64. With an appendix by P.~Deligne. Lecture Notes in 
Mathematics, No. {\textbf{20}} Springer-Verlag, Berlin-New York 1966.  

\bibitem[H2]{hartshorne2} 
R.~Hartshorne, 
Ample vector bundles, 
Inst. Hautes \'Etudes Sci. Publ. Math. No. {\textbf{29}} (1966), 63--94. 

\bibitem[HP1]{hp1}
A.~H\"oring, T.~Peternell, 
Minimal models for K\"ahler threefolds, to appear in Invent. Math. 
arXiv:1304.4013 [math.AG]

\bibitem[HP2]{hp2}
A.~H\"oring, T.~Peternell, 
Mori fiber spaces for K\"ahler threefolds, 
J. Math. Sci. Univ. Tokyo {\textbf{22}} (2015), no. 1, 219--246.

\bibitem[I1]{iitaka} 
S.~Iitaka, Genus and classification of algebraic varieties. I. (Japanese), 
S\^ugaku {\textbf{24}} (1972), no. 1, 14--27. 

\bibitem[I2]{iitaka2} 
S.~Iitaka, On logarithmic Kodaira dimension of algebraic varieties, 
{\em{Complex analysis and algebraic geometry}}, pp.~175--189, 
Iwanami Shoten, Tokyo, 1977. 

\bibitem[Ks]{kashiwara} 
M.~Kashiwara, 
A study of variation of mixed Hodge structure, 
Publ. Res. Inst. Math. Sci. {\textbf{22}} (1986), no. 5, 991--1024.

\bibitem[Kw1]{kawamata1} 
Y.~Kawamata, Characterization of abelian varieties, 
Compositio Math. {\textbf{43}} (1981), no. 2, 253--276. 

\bibitem[Kw2]{kawamata2}
Y.~Kawamata, 
Kodaira dimension of certain algebraic fiber spaces, 
J. Fac. Sci. Univ. Tokyo Sect. IA Math. {\textbf{30}} (1983), no. 1, 1--24. 

\bibitem[Kw3]{kawamata3} 
Y.~Kawamata, Semipositivity theorem for reducible algebraic fiber spaces, 
Pure Appl. Math. Q. {\textbf{7}} (2011), no. 4, 
Special Issue: In memory of Eckart Viehweg, 1427--1447.

\bibitem[Kw4]{kawamata4}
Y.~Kawamata, 
Hodge theory on generalized normal crossing varieties, 
Proc. Edinb. Math. Soc. (2) {\textbf{57}} (2014), no. 1, 175--189.

\bibitem[Ko1]{kollar-sing} 
J.~Koll\'ar, Singularities of pairs, {\em{Algebraic geometry---Santa Cruz 1995}}, 221--287, 
Proc. Sympos. Pure Math., {\textbf{62}}, Part 1, Amer. Math. Soc., Providence, RI, 1997. 

\bibitem[Ko2]{kollar-book}
J.~Koll\'ar, {\em{Singularities of the minimal model program}}, 
Cambridge Tracts in 
Mathematics, {\textbf{200}}. Cambridge University Press, Cambridge, 2013.

\bibitem[KM]{kollar-mori}
J.~Koll\'ar, S.~Mori, {\em{Birational geometry of algebraic varieties}}, 
Cambridge 
Tracts in Mathematics, {\textbf{134}}. Cambridge 
University Press, Cambridge, 1998.

\bibitem[KP]{kovacs-patakfalvi} 
S.~Kov\'acs, Z.~Patakfalvi, 
Projectivity of the moduli space of stable 
log-varieties and subadditvity of log-Kodaira dimension, 
preprint (2015). 
arXiv:1503.02952 [math.AG]

\bibitem[L]{lu} 
S.~Lu, A refined Kodaira 
dimension and its canonical fibration, preprint (2002). 
arXiv:math/0211029 [math.AG]

\bibitem[Ma]{maehara} 
K.~Maehara, 
The weak $1$-positivity of direct image sheaves, 
J. Reine Angew. Math. {\textbf{364}} (1986), 112--129.

\bibitem[Mo]{mori} 
S.~Mori, Classification of higher-dimensional varieties, 
{\em{Algebraic geometry, Bowdoin, 1985 (Brunswick, Maine, 1985)}}, 
269--331, Proc. Sympos. Pure Math., {\textbf{46}}, Part 1, Amer. Math. Soc., 
Providence, RI, 1987.

\bibitem[N1]{nakayama1}
N.~Nakayama, The lower semicontinuity of the plurigenera of complex varieties, 
{\em{Algebraic geometry, Sendai, 1985}}, 551--590, Adv. Stud. Pure 
Math., {\textbf{10}}, North-Holland, Amsterdam, 1987.

\bibitem[N2]{nakayama} 
N.~Nakayama, {\em{Zariski-decomposition and abundance}}, 
MSJ Memoirs, {\textbf{14}}. Mathematical Society of Japan, Tokyo, 2004. 

\bibitem[PS]{ps} 
C.~A.~M.~Peters, J.~H.~M.~Steenbrink, 
{\em{Mixed Hodge structures}}, 
Ergebnisse der Mathematik und ihrer Grenzgebiete. 3. Folge. A 
Series of Modern Surveys in Mathematics,  
{\textbf{52}}. Springer-Verlag, Berlin, 2008.

\bibitem[RR]{rr} 
J.~P.~Ramis, G.~Ruget, 
Complexe dualisant et th\'eor\`emes de dualit\'e en g\'eom\'etrie 
analytique complexe, 
Inst. Hautes \'Etudes Sci. Publ. Math. No. {\textbf{38}} (1970), 
77--91. 

\bibitem[RRV]{rrv} 
J.~P.~Ramis, G.~Ruget, J.-L.~Verdier, 
Dualit\'e relative en g\'eom\'etrie analytique complexe, 
Invent. Math. {\textbf{13}} (1971), 261--283.

\bibitem[Schm]{schmid}
W.~Schmid, 
Variation of Hodge structure: the singularities of the period mapping, 
Invent. Math. {\textbf{22}} (1973), 211--319.

\bibitem[Schn]{schnell} 
C.~Schnell, Weak positivity via mixed Hodge modules, to 
appear in Hodge theory and Complex Algebraic Geometry (Columbus, 
2013).  

\bibitem[St]{steenbrink} 
J.~H.~M.~Steenbrink, 
Mixed Hodge structure on the vanishing cohomology, 
{\em{Real and complex singularities 
(Proc. Ninth Nordic Summer School/NAVF Sympos. Math., Oslo, 
1976)}}, 525--563. Sijthoff and Noordhoff, Alphen aan den Rijn, 1977.

\bibitem[SZ]{sz} 
J.~H.~M.~Steenbrink, S.~Zucker, 
Variation of mixed Hodge structure. I, 
Invent. Math. {\textbf{80}} (1985), no. 3, 489--542. 

\bibitem[Va]{var} 
J.~Varouchas, 
Sur l'image d'une vari\'et\'e k\"ahl\'erienne compacte, 
{\em{Fonctions de plusieurs variables complexes, V (Paris, 1979--1985)}}, 
245--259, Lecture Notes in Math., {\textbf{1188}}, Springer, Berlin, 1986. 

\bibitem[Vd]{verdier} 
J.-L.~Verdier, 
Base change for twisted inverse image of coherent sheaves, 
{\em{1969 Algebraic Geometry (Internat. Colloq., Tata Inst. Fund. Res., 
Bombay, 1968)}} pp. 393--408, Oxford Univ. Press, London.  

\bibitem[V1]{viehweg0} 
E.~Viehweg, 
Die Additivit\"at der Kodaira Dimension f\"ur projektive Faserr\"aume \"uber 
Variet\"aten des allgemeinen Typs, 
J. Reine Angew. Math. {\textbf{330}} (1982), 132--142.

\bibitem[V2]{viehweg1}
E.~Viehweg, 
Weak positivity and the additivity of the Kodaira dimension for certain fibre spaces, 
{\em{Algebraic varieties and analytic varieties (Tokyo, 1981)}}, 329--353, 
Adv. Stud. Pure Math., {\textbf{1}}, North-Holland, Amsterdam, 1983. 

\bibitem[V3]{viehweg2}
E.~Viehweg, 
Weak positivity and the additivity of the Kodaira dimension. II. The local 
Torelli map, {\em{Classification of algebraic and analytic manifolds (Katata, 1982)}}, 
567--589, Progr. Math., {\textbf{39}}, Birkh\"auser Boston, Boston, MA, 1983. 

\bibitem[V4]{viehweg-book} 
E.~Viehweg, 
{\em{Quasi-projective moduli for polarized manifolds}}, 
Ergebnisse der Mathematik und ihrer Grenzgebiete (3), 
{\textbf{30}}. Springer-Verlag, Berlin, 1995.
\end{thebibliography}
\end{document}